\documentclass[a4paper,12pt]{article}

\usepackage{amscd}
\usepackage{amsfonts}
\usepackage{amsmath}
\usepackage{amssymb}
\usepackage{amsthm}
\usepackage[T1]{fontenc} 
\usepackage{here} 
\usepackage{mathrsfs} 
\usepackage{txfonts} 
\usepackage[all]{xy} 
\usepackage{algorithm} 
\usepackage{algpseudocode} 
\usepackage{diagbox} 

\allowdisplaybreaks

\theoremstyle{plain}
\newtheorem{thm}{Theorem}[section]
\newtheorem{lmm}[thm]{Lemma}
\newtheorem{prp}[thm]{Proposition}
\newtheorem{crl}[thm]{Corollary}

\theoremstyle{definition}
\newtheorem{dfn}[thm]{Definition}

\newcommand{\vs}[1][0.2]{\vspace{#1in}\noindent\ignorespaces}
\newcommand{\ba}{\begin{array*}}
\newcommand{\ea}{\end{array*}}
\newcommand{\be}{\begin{eqnarray*}}
\newcommand{\ee}{\end{eqnarray*}}
\newcommand{\bi}{\begin{itemize}}
\newcommand{\ei}{\end{itemize}}
\newcommand{\bb}{\vs\begin{itembox}}
\newcommand{\eb}{\end{itembox}}
\newcommand{\bc}{\begin{center}}
\newcommand{\ec}{\end{center}}
\newcommand{\bs}{\vs\begin{screen}}
\newcommand{\es}{\end{screen}}

\def\ens#1{{\mathchoice{\left\{ #1 \right\}}{\{ #1 \}}{\{ #1 \}}{\{ #1 \}}}}
\def\set#1#2{{\mathchoice{\left\{ #1 \ \middle| \ #2 \right\}}{\{ #1 \mid #2 \}}{\{ #1 \mid #2 \}}{\{ #1 \mid #2 \}}}}
\def\r#1{\text{\rm #1}}

\def\Bigv#1{\left| #1 \right|}
\def\v#1{{\mathchoice{\Bigv{#1}}{| #1 |}{| #1 |}{| #1 |}}}
\def\Bign#1{\left\| #1 \right\|}
\def\n#1{{\mathchoice{\Bign{#1}}{\| #1 \|}{\| #1 \|}{\| #1 \|}}}

\newcommand{\bN}{\mathbb{N}}

\newcommand{\bQ}{\mathbb{Q}}
\newcommand{\bR}{\mathbb{R}}

\newcommand{\bZ}{\mathbb{Z}}

\newcommand{\cC}{\mathscr{C}}

\newcommand{\cF}{\mathscr{F}}

\newcommand{\cP}{\mathscr{P}}

\newcommand{\cR}{\mathscr{R}}

\newcommand{\cV}{\mathscr{V}}
\newcommand{\cW}{\mathscr{W}}

\newcommand{\rA}{\r{A}}
\newcommand{\rB}{\r{B}}
\newcommand{\rC}{\r{C}}

\newcommand{\rM}{\r{M}}

\newcommand{\rP}{\r{P}}

\newcommand{\N}{\bN}
\newcommand{\Q}{\bQ}
\newcommand{\R}{\bR}
\newcommand{\Z}{\bZ}

\newcommand{\Cp}{\mathbb{C}_p}

\newcommand{\Ban}{\r{Ban}}

\newcommand{\Card}{\r{Card}}

\newcommand{\Hom}{\r{Hom}}

\newcommand{\id}{\r{id}}

\newcommand{\Isom}{\r{Isom}}

\newcommand{\Ord}{\r{Ord}}

\newcommand{\rank}{\r{rank}}

\newcommand{\Set}{\r{Set}}

\newcommand{\Suc}{\r{Suc}}

\newcommand{\rCbd}{\rC_{\r{bd}}}
\newcommand{\Rfl}{\r{Ref}}

\algnewcommand\algorithmicbreak{{\bf break}}
\algnewcommand\Break{\algorithmicbreak{}}
\algnewcommand\algorithmiccontinue{{\bf continue}}
\algnewcommand\Continue{\algorithmiccontinue{}}

\title{Structural Hierarchy of Reid Class of non-Archimedean Banach Spaces}
\author{Tomoki Mihara}
\date{}

\begin{document}

\maketitle
\begin{abstract}
Let $k$ be a complete valuation field. We formulate a class $\mathscr{R}$ of Banach $k$-vector spaces analogous to Reid class of Abelian groups. We formulate an analogue of the hierarchy of Reid class introduced by K.\ Eda, and verify a counterpart of the classification theorem of Reid class by K.\ Eda. As an application, we verify that the Banach $\mathbb{C}_p$-vector spaces
\begin{eqnarray*}
& & \ell^{\infty}(\mathbb{N},\mathbb{C}_p), \text{\rm C}_0(\mathbb{N},\mathbb{C}_p), \ell^{\infty}(\mathbb{N},\text{\rm C}_0(\mathbb{N},\mathbb{C}_p)), \text{\rm C}_0(\mathbb{N},\ell^{\infty}(\mathbb{N},\mathbb{C}_p)), \\
& & \ell^{\infty}(\mathbb{N},\text{\rm C}_0(\mathbb{N},\ell^{\infty}(\mathbb{N},\mathbb{C}_p))), \text{\rm C}_0(\mathbb{N},\ell^{\infty}(\mathbb{N},\text{\rm C}_0(\mathbb{N},\mathbb{C}_p))),
\end{eqnarray*}
and so on are all distinct, the Banach $\mathbb{C}_p$-vector space of bounded continuous functions $\mathbb{Q} \to \mathbb{C}_p$ and its dual Banach $\mathbb{C}_p$-vector spaces cannot be expressed by iterated application of bounded direct product and completed direct sum, and there is no left adjoint functor of the forgetful functor from $\mathscr{R}$ to the category of Banach $\mathbb{C}_p$-vector spaces.
\end{abstract}

\tableofcontents

\section{Introduction}
\label{Introduction}

Reid class is the smallest class of Abelian groups containing $\Z$ closed under direct product and direct sum (such that the cardinality of the index set is bounded by a fixed cardinal depending on the context). For example, $\Z^{\N}$, $\Z^{\oplus \N}$, $(\Z^{\oplus \N})^{\N}$, $(\Z^{\N})^{\oplus \N}$, $((\Z^{\N})^{\oplus \N})^{\N}$, $((\Z^{\oplus \N})^{\N})^{\oplus \N}$, and so on belong to Reid class. The study of Reid class arose with a natural question by G.\ Reid: ``Are these groups all distinct?''

\vs
B.\ Zimmermann-Huisgen affirmatively solved this problem in \cite{Zim79}. If we restrict the cardinality of index sets to non-$\omega_1$-measurable cardinals, {\L}o\'s--Eda theorem (cf.\ \cite{EL54}, \cite{Zee55}, and \cite{Eda82}) implies that every Abelian group in Reid class is reflexive with respect to the dual functor $\Hom(\cdot,\Z)$, and Reid class is closed under dual. In particular, non-reflexive Abelian groups such as $\Q$ do not belong to Reid class. How about the dual group of an Abelian group? How about the Abelian group of continuous functions $X \to \Z$ for a topological spaces $X$? Many problems arose, and have ever been successfully solved (cf.\ \cite{Iva80} and \cite{Eda83-2}).

\vs
A standard tool to analyse Reid class is Chase's lemma, which was originally stated as a theorem (cf.\ \cite{Cha62} Theorem 1.2). It roughly states that a homomorphism from a countable direct product to a countable direct sum essentially vanishes if we ignore finite components of the domain and the codomain and the divisible part of the codomain. Since the precise statement is a little complicated because of the use of a filter of right principal ideals of a non-commutative ring, we instead introduce a specialisation to $\Z$:

\begin{thm}[Chase's lemma]
\label{Chase's lemma}
Let $I$ be a countable set, $J$ a set, $(M_i)_{i \in I}$ a family of Abelian groups indexed by $I$, $(N_j)_{j \in J}$ a family of Abelian groups indexed by $J$, and $f$ a group homomorphism $\prod_{i \in I} M_i \to \bigoplus_{j \in J} N_j$. Then there exists a tuple $(m,I_0,J_0)$ of a positive integer $m$, a finite subset $I_0 \subset I$, and a finite subset $J_0 \subset J$ such that
\be
f \left( \ens{0}^{I_0} \times m \prod_{i \in I \setminus I_0} M_i \right) \subset \bigoplus_{j_0 \in J_0} N_{j_0} \oplus \bigcap_{n \in \N} n \bigoplus_{j \in J \setminus J_0} N_j.
\ee
\end{thm}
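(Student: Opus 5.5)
We argue by contradiction, following Chase's original diagonal construction. Since $I$ is countable we may assume $I = \N$ (the finite case is trivial: take $I_0 = I$). For $n \in \N$ write $P_n = \ens{0}^{\ens{0,\ldots,n-1}} \times \prod_{i \geq n} M_i$, and for a finite $J_0 \subset J$ write $D_{J_0} = \bigcap_{n \in \N} n \bigoplus_{j \in J \setminus J_0} N_j$ and $\pi_{J_0}$ for the projection onto $\bigoplus_{j \in J \setminus J_0} N_j$. Suppose the conclusion fails. Then for every $m \geq 1$, every $n$ and every finite $J_0$ there is $x \in P_n$ such that $\pi_{J_0}(f(m x)) \notin D_{J_0}$, i.e.\ there is an integer $k$ with $\pi_{J_0}(f(mx)) \notin k \bigoplus_{j \notin J_0} N_j$.

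We construct inductively sequences $x_n \in P_{r_n}$ (with $r_n$ strictly increasing), finite sets $J_0 \subset J_1 \subset \cdots$, and positive integers $m_0 \mid m_1 \mid \cdots$ with the following properties. First, $f(m_n x_n)$ has a component outside $J_{n-1}$ that is not divisible by $k_n$ in $\bigoplus_{j \notin J_{n-1}} N_j$, where $k_n$ is the witnessing integer. Second, $J_n$ contains the (finite) supports of $f(m_0x_0), \ldots, f(m_nx_n)$. Third, we set $m_{n+1} = m_n k_n$ (or a multiple of it). The point of the divisibility chain is that later terms $m_{\ell} x_{\ell}$ with $\ell > n$ are divisible by $k_n m_n$, so their contribution, projected outside $J_{n-1}$, lies in $k_n\bigoplus_{j\notin J_{n-1}}N_j$. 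Because $r_n \to \infty$, the formal sum $y = \sum_n m_n x_n$ is a well-defined element of $\prod_i M_i$ (coordinatewise finite sums). Write $y = \sum_{\ell \leq n} m_\ell x_\ell + m_{n+1} z_n$ with $z_n \in P_{r_{n+1}}$.

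Apply $f$ and project to the components $J_n \setminus J_{n-1}$. The terms with $\ell < n$ vanish there, since their supports lie in $J_{n-1}$. The term $\ell = n$ is not divisible by $k_n$ on this block; to ensure this, we choose $J_n$ so that the non-divisibility is witnessed already inside $\bigoplus_{j \in J_n \setminus J_{n-1}} N_j$, which is possible because divisibility in a direct sum is checked componentwise. The tail $f(m_{n+1}z_n) = k_n m_n f(z_n)$ is divisible by $k_n$. Hence $f(y)$ has a nonzero projection to $\bigoplus_{j \in J_n\setminus J_{n-1}} N_j$ for every $n$. Since the blocks $J_n \setminus J_{n-1}$ are pairwise disjoint, $f(y)$ has infinite support, contradicting $f(y) \in \bigoplus_{j} N_j$.

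The main obstacle is the bookkeeping in the inductive step. Applying the negated statement with $m = m_n$, $I_0 = \ens{0,\ldots,r_n-1}$ and finite set $J_{n-1}$ gives $x_n$ with $\pi_{J_{n-1}}(f(m_n x_n)) \notin k\bigoplus_{j\notin J_{n-1}}N_j$ for some $k$. The failure of divisibility of the block component must survive adding the tail $k_n m_n f(z_n)$. It does, because $u \notin kA$ and $v \in kA$ imply $u+v \notin kA$ in the block $A = \bigoplus_{j \in J_n\setminus J_{n-1}} N_j$.

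We must also make sure the non-divisibility is witnessed in a finite block. This holds because $\pi_{J_{n-1}}(f(m_nx_n))$ has finite support and $\bigoplus$ divides componentwise, so we take $J_n \supset J_{n-1} \cup \mathrm{supp}\, f(m_nx_n)$. Finally, if some $k_n$ should be $0$, replace it by any nonzero integer for which divisibility fails; such an integer exists precisely because the element lies outside $D_{J_{n-1}}$.
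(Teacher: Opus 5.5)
The paper does not prove this statement. It is quoted as background, as the specialisation to $\Z$ of Theorem 1.2 of Chase's 1962 paper, where it is stated for modules over a ring with a filter of principal right ideals in place of the ideals $m\Z$. So there is no in-paper argument to compare against. Your proof stands on its own, and it is correct: it is essentially Chase's original diagonal construction, carried out directly for $\Z$.

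The two choices $m_{n+1} = m_n k_n$ and $J_n \supseteq J_{n-1} \cup \mathrm{supp}\, f(m_n x_n)$ do the work. Together they guarantee that, for every $n$, the component of $f(y)$ on the block $J_n \setminus J_{n-1}$ is congruent, modulo $k_n \bigoplus_{j \in J_n \setminus J_{n-1}} N_j$, to the element $\pi_{J_{n-1}}(f(m_n x_n))$, which is not divisible by $k_n$. Hence each block component is nonzero, and the pairwise disjointness of the blocks contradicts finiteness of the support. You also placed the one delicate point correctly: non-divisibility must be witnessed inside the finite block, which is why $J_n$ must contain the whole support of $f(m_n x_n)$.

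Compared with citing the general theorem, your route is self-contained. It also makes visible exactly where countability of $I$ is used: only to make $\sum_n m_n x_n$ a coordinatewise finite sum.

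One small remark concerns your last sentence about $k_n = 0$. Under the intended reading, $n$ in $\bigcap_{n \in \N}$ ranges over positive integers, so $\pi_{J_0}(f(mx)) \notin D_{J_0}$ directly produces a positive witness $k$, and the sentence is vacuous. If $0$ were allowed, the intersection would collapse to $\ens{0}$ and the statement itself would be false. For example, take $\Q$-linear maps $\Q^{\N} \to \Q^{(\N)}$ that send the unit vectors to the unit vectors. In that case your claimed nonzero witness would not exist either. You may simply delete that sentence.
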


Theorem \ref{Chase's lemma} has various generalisations. For example, M.\ Dugas and B.\ Zimmermann-Huisgen extended Theorem \ref{Chase's lemma} to the non-$\omega_1$-measurable setting in \cite{DZH82} Theorem 2:

\begin{thm}[Dugas--Zimmermann-Huisgen's extension of Theorem \ref{Chase's lemma}]
\label{Dugas--Zimmermann-Huisgen's theorem}
Let $I$ be a set, $J$ a set, $(M_i)_{i \in I}$ a family of Abelian groups indexed by $I$, $(N_j)_{j \in J}$ a family of Abelian groups indexed by $J$, and $f$ a group homomorphism $\prod_{i \in I} M_i \to \bigoplus_{j \in J} N_j$. If $\# I$ is not $\omega_1$-measurable, then there exists a tuple $(m,I_0,J_0)$ of a positive integer $m$, a finite subset $I_0 \subset I$, and a finite subset $J_0 \subset J$ such that
\be
f \left( \ens{0}^{I_0} \times m \prod_{i \in I \setminus I_0} M_i \right) \subset \bigoplus_{j_0 \in J_0} N_{j_0} \oplus \bigcap_{n \in \N} n \bigoplus_{j \in J \setminus J_0} N_j.
\ee
\end{thm}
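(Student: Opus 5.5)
The plan is to reduce to the countable case, Theorem \ref{Chase's lemma}, using a measure-theoretic argument on subsets of $I$. Fix $f$. For $A \subset I$ write $P_A = \prod_{i \in A} M_i$, viewed inside $\prod_{i \in I} M_i$ by extension by zero. Call $A$ \emph{good} if there are a positive integer $m$, a finite $A_0 \subset A$ and a finite $J_0 \subset J$ with
\be
f \left( \ens{0}^{A_0} \times m P_{A \setminus A_0} \right) \subset \bigoplus_{j_0 \in J_0} N_{j_0} \oplus \bigcap_{n \in \N} n \bigoplus_{j \in J \setminus J_0} N_j,
\ee
and \emph{bad} otherwise. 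The theorem says exactly that $I$ is good. I will assume $I$ is bad and derive a nonprincipal $\omega_1$-complete ultrafilter on a subset of $I$, which contradicts the hypothesis that $\# I$ is not $\omega_1$-measurable.

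First I record a bookkeeping fact. If $J_0 \subset J_0'$ are finite, then
\be
\bigcap_{n \in \N} n \bigoplus_{j \in J \setminus J_0} N_j \subset \bigoplus_{j \in J_0'} N_j \oplus \bigcap_{n \in \N} n \bigoplus_{j \in J \setminus J_0'} N_j .
\ee
This holds because the projections onto the components indexed by $J \setminus J_0'$ preserve $n$-divisibility. The same inclusion holds if $m$ is replaced by a multiple. Consequently the data witnessing goodness can always be enlarged: take the union of the finite sets and the product of the integers.

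Second, I show that the good sets form a $\sigma$-ideal containing all singletons. Singletons are good with $A_0 = A$. Subsets of good sets are good, by restricting. Finite unions of good sets are good: split them into disjoint pieces and combine the witnessing data as above. The key step is closure under countable disjoint unions $A = \bigsqcup_{n} A_n$ with each $A_n$ good. For this, regard $P_A = \prod_{n \in \N} P_{A_n}$ as a countable product and apply Theorem \ref{Chase's lemma} to $f|_{P_A}$. This produces $m$, a finite set $F \subset \N$ and a finite $J_0$ controlling $f$ on $m \prod_{n \notin F} P_{A_n}$. For the finitely many $n \in F$, use the data witnessing that $A_n$ is good. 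Then combine everything by the bookkeeping fact, using additivity of $f$ on the decomposition $P_A = \prod_{n \in F} P_{A_n} \times \prod_{n \notin F} P_{A_n}$.

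Third, I show that there is no infinite sequence $(B_n)_{n \in \N}$ of pairwise disjoint bad sets. If there were one, apply Theorem \ref{Chase's lemma} to $\prod_{n} P_{B_n}$. For $n$ outside the resulting finite set $F$, the set $B_n$ is good with $A_0 = \emptyset$, which is a contradiction. It follows that if $I$ is bad, there is a bad $A \subset I$ that cannot be split into two bad pieces. Otherwise one could repeatedly split off a bad piece and keep a bad remainder, producing infinitely many disjoint bad sets. For this $A$, every $B \subset A$ has exactly one of $B$, $A \setminus B$ bad, because the good sets are closed under finite unions and $A$ is bad. Hence the bad subsets of $A$ form an ultrafilter. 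It is $\omega_1$-complete because the good subsets form a $\sigma$-ideal, and it is nonprincipal because singletons are good. So $\# A$, and hence $\# I$, is $\omega_1$-measurable, which is the contradiction.

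The main obstacle I expect is the countable-union step. One must check carefully that Theorem \ref{Chase's lemma}, applied to the regrouped product, returns its conclusion in a form compatible with the data for the finitely many exceptional blocks. In particular, the divisible parts $\bigcap_{n} n \bigoplus_{j \in J \setminus J_0} N_j$ must behave well when $J_0$ is enlarged, which is exactly what the bookkeeping fact is for.
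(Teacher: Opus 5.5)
The paper gives no proof of this statement. It quotes it from \cite{DZH82}, and within the paper's framework it is simply the non-measurable case of Theorem~\ref{Eda's theorem}. When $\# I$ is not $\omega_1$-measurable, every $\omega_1$-complete ultrafilter on $I$ is principal. So a finite $U_0 \subset \beta_{\omega} I$ consists of the principal ultrafilters at the points of some finite $I_0 \subset I$, and then $m K_{H_0} = \ens{0}^{I_0} \times m \prod_{i \in I \setminus I_0} M_i$.

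Your route goes upward from the countable case instead, and it is correct. Write $D(J_0)$ for $\bigoplus_{j \in J_0} N_j \oplus \bigcap_n n \bigoplus_{j \in J \setminus J_0} N_j$.
\begin{itemize}
\item The good sets form a $\sigma$-ideal containing the singletons. The countable-union step works as you describe: only the finitely many blocks in $F$ need their own witnesses, and $D(J_0)$ is a subgroup that increases with $J_0$.
\item Theorem~\ref{Chase's lemma} forbids an infinite disjoint family of bad sets.
\item A bad set $A$ with no bad splitting therefore carries the ultrafilter of its bad subsets.
\end{itemize}

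When you write it up, make explicit three routine points that you currently only assert.
\begin{itemize}
\item The bad subsets of $A$ are closed under finite intersection. If $B, C \subset A$ are bad, then $A \setminus B$ and $A \setminus C$ are good, so a good $B \cap C$ would make $A$ a union of three good sets.
\item $A$ is uncountable. The paper's definition of $\omega_1$-measurability requires this, and it holds because the $\sigma$-ideal of good sets contains every countable set.
\item The ultrafilter transfers to $I$ via $\set{B \subset I}{B \cap A \text{ is bad}}$.
\end{itemize}

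As for what each approach buys: the deduction from Theorem~\ref{Eda's theorem} is one line, but it rests on a stronger theorem whose proof already contains a sharper, uniform version of this ultrafilter analysis. Your argument needs only Theorem~\ref{Chase's lemma} and a short Boolean-algebra argument. It also isolates the single place where non-measurability enters, namely ruling out the nonprincipal $\omega_1$-complete ultrafilter of bad subsets of a bad set that admits no bad splitting.
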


K.\ Eda further removed from Theorem \ref{Dugas--Zimmermann-Huisgen's theorem} the restriction of cardinality in \cite{Eda83-1} Theorem 2:

\begin{thm}[Eda's extension of Theorem \ref{Dugas--Zimmermann-Huisgen's theorem}]
\label{Eda's theorem}
Let $I$ be a set, $J$ a set, $(M_i)_{i \in I}$ a family of Abelian groups indexed by $I$, $(N_j)_{j \in J}$ a family of Abelian groups indexed by $J$, and $f$ a group homomorphism $\prod_{i \in I} M_i \to \bigoplus_{j \in J} N_j$. There exists a tuple $(m,U_0,J_0)$ of a positive integer $m$, a finite subset $U_0 \subset \beta_{\omega} I$, and a finite subset $J_0 \subset J$ such that
\be
f \left( m K_{H_0} \right) \subset \bigoplus_{j_0 \in J_0} N_{j_0} \oplus \bigcap_{n \in \N} n \bigoplus_{j \in J \setminus J_0} N_j,
\ee
where $\beta_{\omega} I$ denotes the set of $\omega_1$-complete ultrafilters of $I$ and $K_{H_0}$ denotes the subgroup $\set{(x_i)_{i \in I} \in \prod_{i \in I} M_i}{\forall \cF \in U_0[\set{i \in I}{x_i = 0} \in \cF]}$.
\end{thm}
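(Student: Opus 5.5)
We would prove Theorem \ref{Eda's theorem} by reducing it to Chase's lemma (Theorem \ref{Chase's lemma}) through a measure-theoretic analysis of the index set $I$. (We read $K_{H_0}$ as $K_{U_0}$.)

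\emph{Small sets.} For $A \subset I$, write $P_A \subset \prod_{i \in I} M_i$ for the subgroup of elements supported in $A$. Call $A$ \emph{small} if there are $m \geq 1$ and a finite $J_0 \subset J$ such that
\be
f(m P_A) \subset \bigoplus_{j_0 \in J_0} N_{j_0} \oplus \bigcap_{n \in \N} n \bigoplus_{j \in J \setminus J_0} N_j =: D_{J_0}.
\ee
We would first check three properties.
\begin{enumerate}
\item Subsets of small sets are small. This is trivial.
\item Finite unions of small sets are small. Take the least common multiple of the integers $m$ and the union of the finite sets $J_0$. One checks directly that $D_{J_0} + D_{J_1} \subset D_{J_0 \cup J_1}$, by projecting the divisible parts away from finitely many coordinates.
\item If $(C_n)_{n \in \N}$ are pairwise disjoint subsets of $I$, then all but finitely many $C_n$ are small, and so is the union of those. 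To see this, apply Chase's lemma to the composite $\prod_{n \in \N} P_{C_n} \cong P_{\bigcup_n C_n} \to \bigoplus_{j \in J} N_j$, viewed as a map from a \emph{countable} product. This yields a finite $I_0 \subset \N$ with $\bigcup_{n \notin I_0} C_n$ small.
\end{enumerate}
From (1), (2) and (3), disjointifying shows that the small sets form an ideal closed under countable unions.

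\emph{Atoms and ultrafilters.} Call a non-small set $A$ an \emph{atom} if for every $B \subset A$ either $B$ or $A \setminus B$ is small. For an atom $A$, the family $\cF_A = \set{B \subset I}{A \setminus B \text{ is small}}$ is an ultrafilter. It is $\omega_1$-complete because small sets are closed under countable unions, so $\cF_A \in \beta_{\omega} I$.

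We next claim that $I = S \cup A_1 \cup \cdots \cup A_r$ with $S$ small and each $A_k$ an atom. Suppose not. Then we recursively split off non-small pieces: take a non-small remainder that is not a finite union of atoms and a small set, and split it into two non-small parts, at least one of which again fails the decomposition. This produces an infinite sequence of pairwise disjoint non-small sets, contradicting property (3). Set $U_0 = \ens{\cF_{A_1}, \ldots, \cF_{A_r}}$.

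\emph{Uniformity, the main obstacle.} For $x \in K_{U_0}$, its support $B_x$ satisfies $B_x \cap A_k \notin \cF_{A_k}$, so $A_k \setminus (I \setminus B_x)$, that is $B_x \cap A_k$, is small. Hence $B_x$ is small. But the witnesses $(m, J_0)$ a priori depend on $x$, and the union of all supports need not be small.

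To get uniformity we argue by contradiction. If no single pair $(m, J_0)$ works for all of $K_{U_0}$, choose $x_1, x_2, \ldots \in K_{U_0}$, with $x_n$ witnessing failure of the pair $(n!, J_{n-1})$, where $J_{n-1}$ collects finitely many coordinates chosen so far. Each support $B_{x_n}$ is small, so $B = \bigcup_n B_{x_n}$ is small by countable-union closure. Then all $x_n$ lie in $P_B$, and the single witness $(m, J_0)$ for $B$ would be violated at the stage $n$ with $m \mid n!$ and $J_0 \subset J_{n-1}$.

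The delicate point is arranging the recursive choice of the $J_{n-1}$ so that this final contradiction genuinely occurs. We would take $J_{n-1}$ to be increasing and exhausting the coordinates where $f(x_1), \ldots, f(x_{n-1})$ are nonzero. We would also verify carefully the inclusions between the groups $D_{J}$ for nested $J$, since this is where the divisible parts behave least transparently. Once this is done, we obtain $f(m K_{U_0}) \subset D_{J_0}$, which is the assertion of Theorem \ref{Eda's theorem}.
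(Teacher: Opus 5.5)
The paper does not prove this statement. It quotes it as Theorem 2 of \cite{Eda83-1} and only uses the non-Archimedean analogue from \cite{Mih26}, so there is no in-paper argument to compare with. On its own terms, your reduction to Chase's lemma is sound up to the last step. Properties (1)--(3) do make the small sets a $\sigma$-ideal, and the splitting argument correctly yields $I = S \cup A_1 \cup \cdots \cup A_r$. Each $\cF_{A_k}$ is an $\omega_1$-complete ultrafilter, and every $x \in K_{U_0}$ has small support, because $I \setminus B_x \in \cF_{A_k}$ makes $A_k \cap B_x$ small.

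\textbf{The gap is in closing the diagonal argument.} With $J_{n-1} = \bigcup_{k<n} \mathrm{supp} f(x_k)$, the sets $J_{n-1}$ only exhaust the countable set $J^* = \bigcup_n \mathrm{supp} f(x_n)$. The witness $(m,J_0)$ for $B = \bigcup_n B_{x_n}$, however, has $J_0$ an arbitrary finite subset of $J$. If $J$ is uncountable, $J_0 \subset J_{n-1}$ can fail for every $n$, and no countable sequence of finite sets chosen in advance can exhaust $J$. The monotonicity $D_{J'} \subset D_{J''}$ for $J' \subset J''$, which you flag, runs in the wrong direction to help here.

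\textbf{The missing ingredient is a restriction step.} Let $p$ be the endomorphism of $\bigoplus_{j \in J} N_j$ that kills all coordinates outside $J^*$. From $p(lX) \subset l\,p(X)$ you get $p\bigl(\bigcap_{l \in \N} l \bigoplus_{j \in J \setminus J_0} N_j\bigr) \subset \bigcap_{l \in \N} l \bigoplus_{j \in J \setminus (J_0 \cap J^*)} N_j$, and hence $p(D_{J_0}) \subset D_{J_0 \cap J^*}$. Since $f(n! x_n)$ is supported in $J^*$, it equals its image under $p$. So whenever $m \mid n!$, we have $f(n! x_n) \in D_{J_0 \cap J^*}$. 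Because $J_0 \cap J^*$ is finite, it lies in $J_{n-1}$ for all large $n$. Monotonicity then gives $f(n! x_n) \in D_{J_{n-1}}$, which is the desired contradiction. With this inserted, your argument proves the statement.
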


K.\ Eda indicated in personal communication the expectation of the existence of a non-Archimedean counterpart of Theorem \ref{Eda's theorem} with application to analysis of a non-Archimedean counterpart of Reid class. As a preceding study, we formulated and verified a non-Archimedean analogue of Theorem \ref{Eda's theorem} in \cite{Mih26} Theorem 3.1. The aim of this paper is to formulate a non-Archimedean analogue $\cR$ of Reid class, and to apply a corollary of \cite{Mih26} Theorem 3.1 to analyse $\cR$. For example, we verify a non-Archimedean analogue of Eda's classification theorem of Abelian groups in Reid class by a hierarchy indexed by ordinals (cf.\ \cite{Eda83-2} Theorem 1). As an application, we show that $\ell^{\infty}(\N,k)$, $\rC_0(\N,k)$, $\ell^{\infty}(\N,\rC_0(\N,k))$, $\rC_0(\N,\ell^{\infty}(\N,k))$, $\ell^{\infty}(\N,\rC_0(\N,\ell^{\infty}(\N,k)))$, $\rC_0(\N,\ell^{\infty}(\N,\rC_0(\N,k)))$, and so on are all distinct if $\v{k}$ is dense in $[0,\infty)$ (cf.\ Corollary \ref{distinguishing}), the Banach $k$-vector space $\rCbd(\Q,k)$ of bounded continuous functions $\Q \to k$ and its dual Banach $k$-vector spaces cannot be expressed by iterated application of bounded direct product and completed direct sum if $\v{k}$ is dense in $[0,\infty)$ (cf.\ Theorem \ref{bounded continuous function} and Theorem \ref{measure space}), and there is no left adjoint functor of the forgetful functor from $\cR$ to the category of Banach $k$-vector spaces if $k$ is not spherically complete (cf.\ Theorem \ref{left adjoint}). We note that $k = \Cp$ satisfies the conditions.

\vs
We briefly explain contents of this paper. In \S \ref{Convention}, we introduce convention for this paper. In \S \ref{Reid Hierarchy}, we introduce $\cR$ and a non-Archimedean analogue of Eda's hierarchy. In \S \ref{Pure Reid Hierarchy}, we introduce a non-Archimedean analogue of proper $\Z$-kernel groups in the sense of \cite{Eda83-2}. In \S \ref{Reductive Summand}, we introduce a generalisation of the notion of direct summand, and verify a non-Archimedean analogue of Eda's classification theorem. In \S \ref{Counterexamples}, we give several examples of Banach $k$-vector spaces which do not belong to $\cR$, and study categorical properties of $\cR$.

\section{Convention}
\label{Convention}

We denote by $\omega$ the least transfinite ordinal $\aleph_0$, which is identical to the set of non-negative integers. We denote by $\Ord$ the class of ordinal numbers, by $\Suc \subset \Ord$ the subclass of successor ordinals, and by $\Card \subset \Ord$ the subclass of cardinal numbers. For an $\alpha \in \Suc$, we denote by $\alpha_{-}$ the predecessor of $\alpha$.

\vs
An ultrafilter $\cF$ is said to be {\it $\lambda$-complete} for a $\lambda \in \Card$ if $\bigcap_{U \in F} U \in \cF$ for any $F \in \cP_{< \lambda}(\cF) \setminus \ens{\emptyset}$. A set $X$ is said to be {\it $\lambda$-measurable} for a $\lambda \in \Card$ if $X$ is uncountable and admits a $\lambda$-complete non-principal ultrafilter. A $\mu \in \Card$ is said to be {\it measurable} if $\mu$ is $\mu$-measurable. We recall that a set $X$ is $\omega_1$-measurable if and only if there exists a measurable cardinal smaller than or equal to $\# X$, where $\omega_1$ denotes the least uncountable cardinal $\aleph_1$. Since the existence of a measurable cardinal is unprovable under $\textsf{ZFC}$ as long as $\textsf{ZFC}$ is consistent, so is the existence of an $\omega_1$-measurable set.

\vs
For a set $X$, we denote by $\# X$ its cardinality, by $\cP(X)$ the set of subsets of $X$, and by $\cP_{< \omega}(X) \subset \cP(X)$ the subset of finite subsets of $X$. Let $\kappa$ be a cardinal number or the class $\Card$. We denote by $\Set_{< \kappa}$ the class of sets $I$ with $\#I \in \kappa$.

\vs
For a class $X$ and a set $Y$, we denote by $X^Y$ the class of maps $Y \to X$. When we handle a sequence or a family $s$ indexed by a set $I$, we frequently use the map notation $s(i)$ instead of the subscript notation $s_i$ to point the entry at $i \in I$, in order to avoid massive use of subscripts. For a map $f$ and a subset $X'$ of its domain, we denote by $f \upharpoonright X'$ the restriction of $f$ to $X'$. For a set $X$ and a map $f \colon X \to \R_{\geq 0}$, we denote by $\sup_{x \in X} f(x)$ the supremum of the image of $f$ in $\R_{\geq 0} \sqcup \ens{\infty}$. In particular, $\sup_{x \in X} f(x)$ for the case $X = \emptyset$ is $0$ rather than $- \infty$ in this context.

\vs
For a set $X$, an $x \in X$, and a binary relation $R$ on $X$, we set $X_{R x} \coloneqq \set{x' \in X}{x' R x}$. We note that every $d \in \omega$ is identical to $\omega_{< d}$, and hence for a set $X$, $X^d$ formally means $X^{\omega_{< d}}$, which is naturally identified with the set of $d$-tuples in $X$.

\vs
Throughout this paper, $k$ denotes a complete valuation field. Although we do not assume the non-triviality of the valuation of $k$ first, we will concentrate on the case where $\v{k}$ is dense in $\R_{\geq 0}$ later. We denote by $\Ban(k)$ the class of Banach $k$-vector spaces.

\vs
For Banach spaces $V$ and $W$, we denote by $\Hom(V,W)$ the Banach $k$-vector space of bounded $k$-linear homomorphisms equipped with the operator norm, and set
\be
\Hom_{\leq 1}(V,W) \coloneqq \set{f \in \Hom(V,W)}{\n{f} \leq 1}.
\ee
When we refer to $\Ban(k)$ or its subclass, we always equip it with the category structure given by $\Hom_{\leq 1}$. In particular, an isomorphism of Banach $k$-vector spaces refers to an isomorphism in $\Ban(k)$ in this sense, i.e.\ an isometric $k$-linear isomorphism. For a subset $\cC \subset \Ban(k)$, we denote by $\Isom(\cC)$ the subclass $\set{V \in \Ban(k)}{\exists V' \in \cC[V \cong V']}$ of $\Ban(k)$, which is essentially small as a category by definition.

\vs
For a $V \in \Ban(k)$, we denote by $V^{\vee}$ the dual Banach $k$-vector space $\Hom(V,k)$. A $V \in \Ban(k)$ is said to be {\it reflexive} if the canonical morphism $V \to V^{\vee \vee}$ is an isomorphism.

\vs
Let $I$ be a set, and $\cW \in \Ban(k)^I$. For an $I' \in \cP(I)$, we denote by $\Pi_{I'} \cW$ the bounded direct product of $\cW \upharpoonright I'$, i.e.\ the Banach $k$-vector space whose underlying set is the set of maps $v \colon I' \to \bigsqcup_{i \in I'} \cW(i)$ such that $v(i) \in \cW(i)$ for any $i \in I'$ and $\sup_{i \in I'} \n{v(i)} < \infty$ and whose norm is the supremum norm, i.e.\ the map $\n{\cdot} \colon \Pi_{I'} \cW \to \R_{\geq 0}$ defined by
\be
\n{x} \coloneqq \sup_{i \in I'} \n{x(i)},
\ee
and by $\Sigma_{I'} \cW$ the completed direct sum of $\cW \upharpoonright I'$, i.e.\ the closed subspace of $\Pi_{I'} \cW$ given as
\be
& & \set{w \in \Pi_{I'} \cW}{\forall \epsilon \in \R_{> 0}[\exists I'_0 \in \cP_{< \omega}(I')[\forall i \in I' \setminus I'_0[\n{w(i)} < \epsilon]]]} \\
& = & \set{w \in \Pi_{I'} \cW}{\forall \epsilon \in \R_{> 0} \left[ \# \set{i \in I'}{\n{w(i)} \geq \epsilon} < \omega \right]}.
\ee
The reason why we use $\Sigma$ rather than $\bigoplus$ is because the use of $\Sigma$ makes the non-Archimedean analogue of Eda's hierarchy introduced later much easier to imagine, if the reader is familiar with arithmetic hierarchy and analytic hierarchy. We do not use $\Pi$ and $\Sigma$ for algebraic direct products or algebraic sums.

\vs
Throughout this paper, $\rA$ and $\rB$ denote distinct symbols which are either $\Pi$ or $\Sigma$. We use them in order to unify two descriptions such that one is given by the other one except for replacement of the occurrence of $\Pi$ and $\Sigma$ by $\Sigma$ and $\Pi$ respectively.

\vs
For an $I' \in \cP(I)$, we abuse the notation $\sigma_{I'}$ to indicate the zero extension $\rA_{I'} \cW \hookrightarrow \rA_I \cW$, and the notation $\pi^{I'}$ to indicate the canonical projection $\rA_{I} \cW \twoheadrightarrow \rA_{I'} \cW$.

\vs
For a set $J$, a $\cV \in \Ban^J$, an $I' \in \cP(I)$, a $J' \in \cP(J)$, and an $f \in \Hom(\rA_I \cW,\rB_J \cV)$, we abbreviate $\pi^{J'} \circ f \circ \sigma_{I'} \in \Hom(\rA_{I'} \cW,\rB_{J'} \cV)$ to $f_{I'}^{J'}$.

\vs
For a $V \in \Ban(k)$, we denote by $\ell^{\infty}(I,V)$ (resp.\ $\rC_0(I,V)$) the bounded direct product $\Pi_I (V)_{i \in I}$ (resp.\ the completed direct sum $\Sigma_I (V)_{i \in I}$) of the constant family $(V)_{i \in I}$.

\section{Reid Hierarchy}
\label{Reid Hierarchy}

We denote by $F_k^{\circ}$ the set $\set{\ell^{\infty}(\omega_{< n},k)}{n \in \omega}$, and set $F_k \coloneqq \Isom(F_k^{\circ})$.

\begin{dfn}
For a $\kappa \in \Card$ and an $\alpha \in \Ord$, we define subsets $\cR_{< \alpha,\kappa}^{\circ}$, $\Pi_{\alpha,\kappa}^{\circ}$, $\Sigma_{\alpha,\kappa}^{\circ}$, $\cR_{\alpha,\kappa}^{\circ}$, and $\Delta_{\alpha,\kappa}^{\circ}$ of $\Ban(k)$ in the following recursive way:
\be
\cR_{< \alpha,\kappa}^{\circ} & \coloneqq & \bigcup_{\beta \in \alpha} \cR_{\beta,\kappa}^{\circ} \\
\Pi_{\alpha,\kappa}^{\circ} & \coloneqq & F_k^{\circ} \cup \set{\Pi_I \cW}{I \in \kappa \land \cW \in (\cR_{< \alpha,\kappa}^{\circ})^I} \\
\Sigma_{\alpha,\kappa}^{\circ} & \coloneqq & F_k^{\circ} \cup \set{\Sigma_J \cV}{J \in \kappa \land \cV \in (\cR_{< \alpha,\kappa}^{\circ})^J} \\
\cR_{\alpha,\kappa}^{\circ} & \coloneqq & \Pi_{\alpha,\kappa}^{\circ} \cup \Sigma_{\alpha,\kappa}^{\circ} \\
\Delta_{\alpha,\kappa}^{\circ} & \coloneqq & \Pi_{\alpha,\kappa}^{\circ} \cap \Sigma_{\alpha,\kappa}^{\circ}
\ee
For a $\kappa \in \Card$ and an $\alpha \in \Ord$, we define essentially small subclasses $\cR_{< \alpha,\kappa}$, $\Pi_{\alpha,\kappa}$, $\Sigma_{\alpha,\kappa}$, $\cR_{\alpha,\kappa}$, and $\Delta_{\alpha,\kappa}$ of $\Ban(k)$ in the following way:
\be
\cR_{< \alpha,\kappa} & \coloneqq & \Isom(\cR_{< \alpha,\kappa}^{\circ}) \\
\Pi_{\alpha,\kappa} & \coloneqq & \Isom(\Pi_{\alpha,\kappa}^{\circ}) \\
\Sigma_{\alpha,\kappa} & \coloneqq & \Isom(\Sigma_{\alpha,\kappa}^{\circ}) \\
\cR_{\alpha,\kappa} & \coloneqq & \Isom(\cR_{\alpha,\kappa}^{\circ}) \\
\Delta_{\alpha,\kappa} & \coloneqq & \Isom(\Delta_{\alpha,\kappa}^{\circ})
\ee
For an $\alpha \in \Ord$, we define subclasses $\cR_{< \alpha,\Ord}$, $\Sigma_{< \alpha,\Ord}$, $\Pi_{\alpha,\Ord}$, $\Sigma_{\alpha,\Ord}$, $\cR_{\alpha,\Ord}$, and $\Delta_{\alpha,\Ord}$ of $\Ban(k)$ in the following way:
\be
\cR_{< \alpha,\Ord} & \coloneqq & \bigcup_{\kappa \in \Card} \cR_{< \alpha,\kappa} \\
\Pi_{\alpha,\Ord} & \coloneqq & \bigcup_{\kappa \in \Card} \Pi_{\alpha,\kappa} \\
\Sigma_{\alpha,\Ord} & \coloneqq & \bigcup_{\kappa \in \Card} \Sigma_{\alpha,\kappa} \\
\cR_{\alpha,\Ord} & \coloneqq & \bigcup_{\kappa \in \Card} \cR_{\alpha,\kappa} \\
\Delta_{\alpha,\Ord} & \coloneqq & \bigcup_{\kappa \in \Card} \Delta_{\alpha,\kappa}
\ee
\end{dfn}

In the following in this section, let $\kappa$ denote either a fixed infinite cardinal or the class $\Ord$. We omit the occurrence of ``$,\kappa$'' in subscripts in the notions defined above. Since $\Pi$ and $\Sigma$ preserve isomorphisms, we have
\be
\cR_{< \alpha} & = & \bigcup_{\beta \in \alpha} \cR_{\beta} \\
\Pi_{\alpha} & = & F_k \cup \set{V \in \Ban(k)}{\exists I \in \Set_{< \kappa}[\exists \cW \in \cR_{< \alpha}^I[V \cong \Pi_I \cW]]} \\
\Sigma_{\alpha} & = & F_k \cup \set{V \in \Ban(k)}{\exists J \in \Set_{< \kappa}[\exists \cV \in \cR_{< \alpha}^J[V \cong \Sigma_J \cV]]} \\
\cR_{\alpha} & = & \Pi_{\alpha} \cup \Sigma_{\alpha} \\
\Delta_{\alpha} & = & \Pi_{\alpha} \cap \Sigma_{\alpha}
\ee
for any $\alpha \in \Ord$ by transfinite induction on $\alpha$. We call these hierarchies {\it Reid hierarchy (with respect to $\kappa$)}, and study their basic properties.

\begin{prp}
\label{rank 0}
The equations
\be
\emptyset & = & \cR_{< 0} \\
F_k & = & \Pi_0 = \Sigma_0 = \cR_0.
\ee
hold.
\end{prp}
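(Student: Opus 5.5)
The plan is to unwind the recursive definitions at $\alpha = 0$. The claim $\cR_{< 0} = \emptyset$ comes first, and everything else follows from it.

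By definition, $\cR_{< 0,\kappa}^{\circ} = \bigcup_{\beta \in 0} \cR_{\beta,\kappa}^{\circ}$. Since $0 = \emptyset$ as an ordinal, this is an empty union, hence empty. Therefore $\cR_{< 0,\kappa} = \Isom(\emptyset) = \emptyset$ for each cardinal $\kappa$. For $\kappa = \Ord$, the class $\cR_{< 0}$ is a union over $\kappa \in \Card$ of empty classes, so it is empty as well.

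Next we compute $\Pi_{0,\kappa}^{\circ}$. It equals $F_k^{\circ} \cup \set{\Pi_I \cW}{I \in \kappa \land \cW \in (\emptyset)^I}$. The set $\emptyset^I$ is empty unless $I = \emptyset$, in which case it contains only the empty family. The bounded direct product over the empty index set is the zero space, and the zero space equals $\ell^{\infty}(\omega_{< 0},k) \in F_k^{\circ}$. Here $\kappa$ is infinite (or we are taking the union over all cardinals), so the index $\emptyset$ may or may not occur, but in either case it contributes only the zero space. Hence $\Pi_{0,\kappa}^{\circ} = F_k^{\circ}$.

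The identical argument applies to $\Sigma_{0,\kappa}^{\circ}$: the completed direct sum over the empty set is also the zero space, so $\Sigma_{0,\kappa}^{\circ} = F_k^{\circ}$. Consequently $\cR_{0,\kappa}^{\circ} = F_k^{\circ}$ and $\Delta_{0,\kappa}^{\circ} = F_k^{\circ}$. Applying $\Isom$ gives $\Pi_{0,\kappa} = \Sigma_{0,\kappa} = \cR_{0,\kappa} = \Isom(F_k^{\circ}) = F_k$. Since $F_k$ does not depend on $\kappa$, taking the union over $\kappa \in \Card$ gives the same equalities for $\kappa = \Ord$.

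The only point needing care is the empty index set. One must check that the empty family is admissible and that $\Pi_\emptyset$ and $\Sigma_\emptyset$ both give the zero Banach space, which lies in $F_k^{\circ}$ as the case $n = 0$. Both facts follow directly from the definitions, with $\sup$ over the empty set equal to $0$ by convention. Beyond this the argument is pure bookkeeping, so I expect no real obstacle.
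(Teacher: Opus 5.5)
Your proposal is correct and follows the same route as the paper. $\cR_{< 0}$ is an empty union, so the only family admissible in the definitions of $\Pi_0$ and $\Sigma_0$ is the empty one. That family yields the zero space $\ell^{\infty}(\omega_{< 0},k) \in F_k^{\circ}$. You simply spell out the empty-index-set bookkeeping and the passage to $\kappa = \Ord$, which the paper's one-line proof leaves implicit.
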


\begin{proof}
The first equation follows from the emptiness of the index set $0 = \omega_{< 0}$, and the second equation follows from the first equation.
\end{proof}

\begin{prp}
\label{ordered}
For any $\alpha \in \Ord$, the inclusions
\be
F_k \cup \cR_{< \alpha} \subset \Delta_{\alpha}
\ee
holds.
\end{prp}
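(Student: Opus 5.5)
Since $\Delta_{\alpha} = \Pi_{\alpha} \cap \Sigma_{\alpha}$ and both $\Pi_{\alpha}$ and $\Sigma_{\alpha}$ contain $F_k$ by definition, the inclusion $F_k \subset \Delta_{\alpha}$ is immediate. It remains to show $\cR_{< \alpha} \subset \Pi_{\alpha}$ and $\cR_{< \alpha} \subset \Sigma_{\alpha}$. The plan is to realise an arbitrary $V \in \cR_{< \alpha}$ as a one-term product and a one-term sum: take $I = J = 1 = \ens{0}$, which belongs to $\Set_{< \kappa}$ because $\kappa$ is an infinite cardinal or $\Ord$. Set $\cW = \cV \coloneqq (V)_{i \in 1} \in \cR_{< \alpha}^1$.

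The key computation is that for a one-element index set the bounded direct product and the completed direct sum coincide, and both are isometric to $V$. The map $\Pi_1 \cW \to V$, $v \mapsto v(0)$, is a $k$-linear bijection. It is isometric because the supremum norm over a singleton equals $\n{v(0)}$. Every element of $\Pi_1 \cW$ also lies in $\Sigma_1 \cW$, since the set $\set{i \in 1}{\n{w(i)} \geq \epsilon}$ is always finite. Hence $V \cong \Pi_1 \cW$ and $V \cong \Sigma_1 \cV$. By the description of $\Pi_{\alpha}$ and $\Sigma_{\alpha}$ stated just after the definition, obtained by transfinite induction, this gives $V \in \Pi_{\alpha} \cap \Sigma_{\alpha} = \Delta_{\alpha}$.

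There is no real obstacle. The only point requiring care is that the displayed description of $\Pi_{\alpha}$ and $\Sigma_{\alpha}$ is phrased up to isomorphism, with $V \in \cR_{< \alpha}$ rather than $\cR_{< \alpha}^{\circ}$. To be safe, one can instead work at the level of the $\circ$-classes. Pick $V' \in \cR_{< \alpha}^{\circ}$ with $V \cong V'$, note that $\Pi_1 (V')$ lies in both $\Pi_{\alpha}^{\circ}$ and $\Sigma_{\alpha}^{\circ}$ (more precisely, $\Pi_1(V') \in \Pi_{\alpha}^{\circ}$ and $\Sigma_1(V') \in \Sigma_{\alpha}^{\circ}$ are isometric to each other and to $V$), and then pass to $\Isom$. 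In the case $\kappa = \Ord$, one chooses a cardinal $\kappa'$ with $V \in \cR_{< \alpha,\kappa'}$ and applies the argument with $\kappa'$ enlarged to be at least $2$ if necessary, using the evident monotonicity of the hierarchy in $\kappa$.
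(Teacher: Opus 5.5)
Your proposal is correct and follows the paper's proof: $F_k$ lies in $\Delta_{\alpha}$ directly by the definitions, and any $V \in \cR_{< \alpha}$ is realised over a singleton index set as $\ell^{\infty}(\ens{\ast},V) = \rC_0(\ens{\ast},V)$. Your extra bookkeeping is a harmless refinement of the same argument: at the level of the $\circ$-classes, $\Pi_1(V')$ and $\Sigma_1(V')$ are literally equal and hence lie in $\Delta_{\alpha}^{\circ}$, and for $\kappa = \Ord$ you enlarge the cardinal.
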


\begin{proof}
The inclusion $F_k \subset \Delta_{\alpha}$ directly follows from the definitions of $\Sigma_{\alpha}$ and $\Pi_{\alpha}$. The inclusion $\cR_{< \alpha} \subset \Delta_{\alpha}$ follows from $V \cong \ell^{\infty}(\ens{\ast},V) = \rC_0(\ens{\ast},V)$ for any $V \in \cR_{< \alpha}$.
\end{proof}

By Proposition \ref{ordered}, we have
\be
\bigcup_{\alpha \in \Ord} \Pi_{\alpha} = \bigcup_{\alpha \in \Ord} \Sigma_{\alpha} = \bigcup_{\alpha \in \Ord} \cR_{\alpha}.
\ee
We denote this class by $\cR$, and call it {\it Reid class of Banach $k$-vector spaces (with respect to $\kappa$)}. For a $V \in \cR$, we denote by $\rank_{\cR}(V)$ the least ordinal $\alpha$ such that $V \in \cR_{\alpha}$, which exists by the definition of $\cR$ and the well-foundedness of $\Ord$. As the name indicates, $\cR$ is a non-Archimedean analogue of Reid class of Abelian groups, i.e.\ the smallest class $\cR$ of Abelian groups with $\Z \in \cR$ closed under isomorphism and also under direct product and direct sum. Indeed, the following characterisation holds:

\begin{prp}
Reid class of Banach $k$-vector spaces is the smallest class closed under isomorphism and also under bounded direct product and completed direct sum of families whose index sets belong to $\Set_{< \kappa}$, in the following sense:
\bi
\item[(1)] The class $\cR$ is closed under isomorphism and also under bounded direct product and completed direct sum of families whose index sets belong to $\Set_{< \kappa}$. 
\item[(2)] Let $\cC \subset \Ban(k)$ be a subclass with $k \in \cC$. If $\cC$ is closed under isomorphism and also under bounded direct product and completed direct sum of families whose index sets belong to $\Set_{< \kappa}$, then the inclusion $\cR \subset \cC$ holds.
\ei
\end{prp}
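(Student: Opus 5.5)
Both assertions will be proved by working through the hierarchy, using the equations for $\Pi_\alpha$, $\Sigma_\alpha$, $\cR_\alpha$ displayed before Proposition \ref{rank 0}. Throughout, $\kappa$ is an infinite cardinal or $\Ord$, and $\Set_{< \kappa}$ is read accordingly.

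For (1), closure under isomorphism is immediate. Each $\cR_\alpha = \Isom(\cR_\alpha^{\circ})$ is closed under isomorphism, and $\cR$ is the union of the $\cR_\alpha$. For the constructions, take $I \in \Set_{< \kappa}$ and $\cW \in \cR^I$. For each $i \in I$ set $\alpha(i) \coloneqq \rank_{\cR}(\cW(i))$, and let $\alpha \coloneqq \sup_{i \in I} (\alpha(i)+1)$. This is an ordinal by the replacement axiom, since $I$ is a set. Then $\cW(i) \in \cR_{\alpha(i)} \subset \cR_{< \alpha}$ for every $i$, so $\cW \in \cR_{< \alpha}^I$. The displayed characterisation then gives $\Pi_I \cW \in \Pi_\alpha \subset \cR$ and $\Sigma_I \cW \in \Sigma_\alpha \subset \cR$. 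When $\kappa = \Ord$ there is one extra point: $\cR_{< \alpha,\Ord}$ is a union over cardinals $\lambda$. Here I would pick, for each $i$, a $\lambda_i$ with $\cW(i) \in \cR_{< \alpha,\lambda_i}$, and use $\lambda \coloneqq \sup_i \lambda_i$ together with $\lambda > \# I$. This works because the classes $\cR_{\beta,\lambda}$ are monotone in $\lambda$, which follows by a straightforward transfinite induction on $\beta$.

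For (2), let $\cC$ be as in the statement. I will show $\cR_\alpha \subset \cC$ for all $\alpha \in \Ord$ by transfinite induction on $\alpha$. First, $F_k \subset \cC$. Indeed, $\ell^{\infty}(\omega_{< n},k) = \Pi_{\omega_{< n}}(k)_{i}$ with the finite index set $n \in \Set_{< \kappa}$, because $\kappa$ is infinite. Since $k \in \cC$ and $\cC$ is closed under bounded direct products and isomorphism, we get $F_k \subset \cC$; the case $n = 0$ gives the zero space, an empty product. Now suppose $\cR_\beta \subset \cC$ for all $\beta \in \alpha$, so that $\cR_{< \alpha} \subset \cC$. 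Any $V \in \Pi_\alpha \setminus F_k$ is isomorphic to some $\Pi_I \cW$ with $I \in \Set_{< \kappa}$ and $\cW \in \cR_{< \alpha}^I \subset \cC^I$. Hence $V \in \cC$ by the closure properties, and the same argument applies to $\Sigma_\alpha$. Therefore $\cR_\alpha \subset \cC$ for every $\alpha$, and so $\cR \subset \cC$.

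The only real obstacle is bookkeeping rather than mathematics. One has to check that the characterisation of $\Pi_\alpha,\Sigma_\alpha$ via $\Set_{< \kappa}$ matches the definition via $I \in \kappa$ up to isomorphism; this amounts to reindexing a family along a bijection with a cardinal, which preserves $\Pi$ and $\Sigma$ isometrically. In the case $\kappa = \Ord$ one also needs the monotonicity in $\lambda$ used in (1). I would state that monotonicity as a short auxiliary induction before carrying out (1).
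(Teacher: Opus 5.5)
Your proposal is correct and follows essentially the paper's proof. For (1) you use the same supremum-of-ranks bound, and for (2) you obtain $F_k \subset \cC$ from finite bounded products and then run the same transfinite induction. The only difference is that you spell out bookkeeping the paper absorbs into its displayed characterisation of $\Pi_{\alpha},\Sigma_{\alpha}$, namely monotonicity in $\lambda$ for $\kappa = \Ord$ and reindexing along bijections with cardinals.
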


\begin{proof}
(1) Let $V \in \Ban(k)$ and suppose that there exists a $V' \in \cR$ with $V \cong V'$. By the definition of $\cR$, there exists an $\alpha \in \Ord$ such that $V' \in \cR_{\alpha}$. By $\cR_{\alpha} = \Isom(\cR_{\alpha}^{\circ})$, we obtain $V \in \cR_{\alpha} \subset \cR$.

\vs
Let $I \in \Set_{< \kappa}$ and $\cW \in \cR^I$. Set $\alpha \coloneqq (\sup \set{\rank_{\cR}(\cW(i))}{i \in I}) + 1$. We have $\cW \in \cR_{< \alpha}^I$. This implies $\Pi_I \cW \in \Pi_{\alpha} \subset \cR$ and $\Sigma_I \cW \in \Sigma_{\alpha} \subset \cR$.

\vs
(2) Since $\cC$ is closed under isomorphism and finite bounded direct product, we have $F_k \subset \cC$ by $k \in \cC$. It suffices to show $\cR_{\alpha} \setminus F_k \subset \cC$ for any $\alpha \in \Ord$ by transfinite induction on $\alpha$. Let $V \in \rA_{\alpha} \setminus F_k$. By the definition of $\rA_{\alpha}$, there exists a pair $(I,\cW)$ of an $I \in \Set_{< \kappa}$ and a $\cW \in \cR_{< \alpha}^I$ with $V \cong \rA_I \cW$. By induction hypothesis, we have $\cW \in \cC^I$. Since $\cC$ is closed under isomorphism and under bounded direct product and completed direct sum of families indexed by $I$, we have $V \in \cC$.
\end{proof}

Reid hierarchy can be stable and hence $\cR$ can be essentially small by the following reason:

\begin{prp}
\label{essentially small}
Suppose that $\kappa$ is a regular cardinal. Then the equality $\cR = \cR_{< \kappa}$ holds.
\end{prp}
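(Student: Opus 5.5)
The inclusion $\cR_{< \kappa} \subset \cR$ is immediate from the definition of $\cR$, so the plan is to prove the reverse inclusion $\cR_{\alpha} \subset \cR_{< \kappa}$ for every $\alpha \in \Ord$, by transfinite induction on $\alpha$. For $\alpha \in \kappa$ there is nothing to show, since then $\cR_{\alpha} \subset \cR_{< \kappa}$ by definition. So assume $\alpha \geq \kappa$ and that $\cR_{\beta} \subset \cR_{< \kappa}$ for all $\beta \in \alpha$, which gives $\cR_{< \alpha} = \cR_{< \kappa}$.

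Take $V \in \cR_{\alpha}$. If $V \in F_k$, then $V \in \cR_0 \subset \cR_{< \kappa}$ by Proposition \ref{rank 0}, since $\kappa$ is infinite and so $0 \in \kappa$. Otherwise $V \cong \rA_I \cW$ for some $I \in \Set_{< \kappa}$ and some $\cW \in \cR_{< \alpha}^I = \cR_{< \kappa}^I$. Then every $\cW(i)$ has $\rank_{\cR}(\cW(i)) \in \kappa$. Set $\beta \coloneqq \sup \set{\rank_{\cR}(\cW(i)) + 1}{i \in I}$.

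The key step is to show $\beta \in \kappa$, and this is exactly where regularity of $\kappa$ is used. Each $\rank_{\cR}(\cW(i)) + 1$ lies in $\kappa$ because $\kappa$ is an infinite cardinal, hence a limit ordinal. We also have $\# I < \kappa$. A regular $\kappa$ has cofinality $\kappa$, so a supremum of fewer than $\kappa$ ordinals below $\kappa$ stays below $\kappa$. This gives $\beta \in \kappa$. The case $I = \emptyset$ is harmless: then $\beta = 0$ and $V \cong \ell^{\infty}(\omega_{< 0},k) \in F_k$.

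Now $\cW \in \cR_{< \beta}^I$, so $\rA_I \cW \in \rA_{\beta}$ by the description of $\Pi_{\beta}$ and $\Sigma_{\beta}$ following the definition. Since $\rA_{\beta}$ is closed under isomorphism, $V \in \cR_{\beta} \subset \cR_{< \kappa}$, which completes the induction.

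There is one notational point to settle: $I \in \Set_{< \kappa}$ means $\# I \in \kappa$, which is the same as $\# I < \kappa$, so the cofinality argument applies as stated. I do not expect any real obstacle beyond this cofinality step.
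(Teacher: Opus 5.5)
Your proposal is correct and follows essentially the same route as the paper: a transfinite induction proving $\cR_{\alpha} \subset \cR_{< \kappa}$, with the $F_k$ case absorbed into $\cR_0$, and the remaining case settled by using regularity of $\kappa$ to bound the ranks of the $\cW(i)$ below $\kappa$. The differences are cosmetic and do not affect the argument: you add a preliminary split on $\alpha \in \kappa$, and you take $\beta$ as the supremum of the values $\rank_{\cR}(\cW(i)) + 1$ rather than as $(\sup \rank_{\cR}(\cW(i))) + 1$.
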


\begin{proof}
It suffices to show $\cR_{\alpha} \subset \cR_{< \kappa}$ for any $\alpha \in \Ord$ by transfinite induction on $\alpha$. Let $V \in \rA_{\alpha}$. If $V \in F_k$, then we have $V \in \rA_0 \subset \cR_{< \kappa}$ by $0 \in \kappa$. Suppose $V \notin F_k$, i.e.\ there exists a pair $(I,\cW)$ of an $I \in \Set_{< \kappa}$ and a $\cW \in \cR_{< \alpha}^I$ with $V \cong \rA_I \cW$. By induction hypothesis, we have $\cW \in \cR_{< \kappa}^I$. In particular, we have $\rank_{\cR}(\cW(i)) \in \kappa$ for any $i \in I$. Set $\beta \coloneqq (\sup \set{\rank_{\cR}(\cW(i))}{i \in I}) + 1$. By $\# I \in \kappa$ and the regularity of $\kappa$, we have $\beta \in \kappa$. Therefore, we obtain $V \in \rA_{\beta} \subset \cR_{< \kappa}$ by $\cW \in \cR_{< \beta}^I$.
\end{proof}

\begin{prp}
\label{stability}
For any tuple $(\alpha,J,\cV)$ of an $\alpha \in \Ord$, a $J \in \Set_{< \kappa}$, and a $\cV \in \rA_{\alpha}^J$, the relation $\rA_J \cV \in \rA_{\alpha}$ holds.
\end{prp}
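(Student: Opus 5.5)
We must show: if each $\cV(j) \in \rA_\alpha$ and $J \in \Set_{<\kappa}$, then $\rA_J \cV \in \rA_\alpha$. The idea is to flatten the iterated construction via associativity of $\rA$: bounded products of bounded products are bounded products, and completed sums of completed sums are completed sums. The one caveat is that finitely many extra coordinates (the $F_k$ case) need to be absorbed into a single index set.

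\textbf{Step 1: choose a presentation of each factor.} For each $j \in J$ I choose, using the axiom of choice, a pair $(I_j,\cW_j)$ with $I_j \in \Set_{<\kappa}$, $\cW_j \in \cR_{<\alpha}^{I_j}$ and an isometric isomorphism $\cV(j) \cong \rA_{I_j} \cW_j$. This is possible in both cases allowed by the description of $\rA_\alpha$:
\begin{itemize}
\item if $\cV(j) \in F_k$, say $\cV(j) \cong \ell^\infty(\omega_{<n},k)$, then $\ell^\infty(\omega_{<n},k) = \rA_{\omega_{<n}}(k)$ because finite bounded products and finite completed sums coincide, so I may take $I_j = n$ and $\cW_j$ the constant family $k$;
\item otherwise a presentation exists by definition.
\end{itemize}
The first case needs $k \in \cR_{<\alpha}$, which holds by Proposition \ref{rank 0} and Proposition \ref{ordered} when $\alpha \geq 1$.

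\textbf{Step 2: the case $\alpha = 0$.} Here $\rA_0 = F_k$ by Proposition \ref{rank 0}, so each $\cV(j) \cong k^{n_j}$ with the sup norm. Then $\rA_J \cV \cong \rA_{I}(k)_{i \in I}$ with $I = \bigsqcup_j n_j$. This lies in $F_k$ only if $I$ is finite, which needs $J$ finite and all $n_j = 0$ off a finite set. So the statement would fail for $\alpha = 0$ with $J$ infinite. I will therefore either treat $\alpha = 0$ and finite $J$ directly, or, more likely, note that $\rA_0 = F_k$ is excluded by the intended reading: with the convention $0 = \omega_{<0}$, the recursion gives $\Pi_0^\circ = F_k^\circ \cup \{\Pi_\emptyset\}$ and $\cR_{<0} = \emptyset$. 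I will check this point against the definitions before writing the argument and restrict to $\alpha \geq 1$ if needed.

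\textbf{Step 3: the main flattening.} Set $I \coloneqq \bigsqcup_{j \in J} I_j = \set{(j,i)}{j \in J, i \in I_j}$ and $\cW(j,i) \coloneqq \cW_j(i) \in \cR_{<\alpha}$. The key claim is an isometric isomorphism
\be
\rA_J(\rA_{I_j}\cW_j)_{j\in J} \cong \rA_I \cW,
\ee
given by $v \mapsto ((j,i) \mapsto v(j)(i))$.

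For $\Pi$, this map is an isometry because a supremum of suprema is the supremum over $I$, and boundedness on each side corresponds.

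For $\Sigma$, take $v$ in the iterated sum and fix $\epsilon > 0$. Only finitely many $j$ have $\n{v(j)} \geq \epsilon$, and for each such $j$ only finitely many $i$ have $\n{v(j)(i)} \geq \epsilon$. Hence the set $\set{(j,i)}{\n{v(j)(i)} \geq \epsilon}$ is finite. Conversely, an element of $\Sigma_I \cW$ restricts to elements of each $\Sigma_{I_j}\cW_j$, and $\n{v(j)} = \sup_i \n{v(j)(i)}$. So if $\n{v(j)} \geq \epsilon$ there is some $i$ with $\n{v(j)(i)} \geq \epsilon' $ for any chosen $\epsilon' < \epsilon$, which gives finiteness in $j$.

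Combining this with the isomorphisms $\cV(j) \cong \rA_{I_j}\cW_j$ gives $\rA_J\cV \cong \rA_I\cW$, since $\rA_J$ of a family of isometric isomorphisms is again an isometric isomorphism.

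\textbf{Step 4: the cardinality bound.} It remains to show $I \in \Set_{<\kappa}$. The main obstacle is that $\#I$ can equal $\sup_j \# I_j$ (or $\#J\cdot\sup$), which may reach $\kappa$ when $\kappa$ is singular. If $\kappa = \Ord$ there is nothing to prove. Otherwise I expect to need $\#I < \kappa$, and I would try the following fallback first: if $\#I \geq \kappa$, check whether the paper's intended statement holds under regularity of $\kappa$, as in Proposition \ref{essentially small}, or whether $\kappa$ is silently assumed closed under $\#J\cdot\sup_j\#I_j$. Granting $\#I<\kappa$, we get $\rA_J\cV \in \rA_\alpha$ by the description of $\rA_\alpha$.
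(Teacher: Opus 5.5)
Your approach is the paper's: choose presentations $\cV(j) \cong \rA_{I_j}\cW_j$ with $\cW_j \in \cR_{<\alpha}^{I_j}$, flatten to $I = \bigsqcup_{j \in J}\{j\}\times I_j$ with $\cW((j,i)) = \cW_j(i)$, and conclude $\rA_J\cV \cong \rA_I\cW \in \rA_\alpha$. The paper does this in one line, and your Step 3 check of the isometry, including the $\Sigma$ case, is correct and more careful. The two points you leave open are not gaps in your method. They are genuine defects of the statement as printed, which the paper's proof passes over. So instead of hedging, settle them as follows.

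\textbf{The case $\alpha = 0$.} No reading of the definitions excludes it. The empty product is $\{0\} = \ell^{\infty}(\omega_{<0},k) \in F_k^{\circ}$, so $\Pi_0^{\circ} = \Sigma_0^{\circ} = F_k^{\circ}$ exactly, and $\rA_0 = F_k$ by Proposition \ref{rank 0}. If $\kappa > \omega$, take $J = \omega$ and $\cV(j) = k$. Then $\Pi_J\cV = \ell^{\infty}(\omega,k)$ and $\Sigma_J\cV = \rC_0(\omega,k)$ are infinite-dimensional, so neither lies in $\rA_0$. The precise failure condition is that infinitely many $\cV(j)$ are nonzero; your phrase ``needs $J$ finite'' is slightly too strong. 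The paper's proof breaks exactly where your Step 1 looks. A member of $F_k$ has a presentation over $\cR_{<\alpha}$ only because $k \in \cR_0 \subset \cR_{<\alpha}$, which requires $\alpha \geq 1$. So your Step 1 is the needed repair. The result holds for $\alpha \geq 1$, and at $\alpha = 0$ only when all but finitely many $\cV(j)$ vanish (automatic if $\kappa = \omega$).

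\textbf{The cardinality bound.} For $\kappa = \Ord$ there is nothing to check. For regular $\kappa$, the set $\{\#I_j : j \in J\}$ has fewer than $\mathrm{cf}(\kappa) = \kappa$ elements, each below $\kappa$. Hence $\mu = \sup_{j}\#I_j < \kappa$, and $\#I \leq \#J \cdot \mu < \kappa$ since $\kappa$ is infinite.

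For singular $\kappa$ the statement is false. Take $\kappa = \aleph_\omega$, $\alpha = 1$, $J = \omega$, and $\cV(n) = \rC_0(\aleph_n,k) \in \Sigma_1$. Then $\Sigma_J\cV \cong \rC_0(\aleph_\omega,k)$. On the other hand, every member of $\Sigma_1$ is isometric to some $\rC_0(X,k)$ with $\#X < \aleph_\omega$. These spaces are distinguished by the dimension over the residue field $\tilde{k}$ of the closed unit ball modulo the open unit ball. That dimension equals $\#X$ for $\rC_0(X,k)$ and is preserved by isometric isomorphisms.

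So add the hypothesis ``$\kappa$ regular or $\kappa = \Ord$'' (as in Proposition \ref{essentially small}) together with $\alpha \geq 1$; with these, your proof is complete. Corollary \ref{finite product} only uses finite $J$, where neither issue arises.
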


\begin{proof}
For each $j \in J$, we have $\cV(j) \in \rA_{\alpha}$, and take a pair $(I_j,\cW_j)$ of an $I_j \in \Set_{< \kappa}$ and a $\cW_j \in \cR_{< \alpha}^{I_j}$ with $\cV(j) \cong \rA_{I_j} \cW_j$. Set $I \coloneqq \bigsqcup_{j \in J} \ens{j} \times I_j$. We define a $\cW \in \cR_{< \alpha}^I$ by
\be
\cW((j,i)) \coloneqq \cW_j(i)
\ee
Then we obtain $\rA_J \cV \cong \rA_I \cW \in \rA_{\alpha}$.
\end{proof}

\begin{crl}
\label{finite product}
For any tuple $(\alpha,I,\cV)$ of an $\alpha \in \Ord$, an $I \in \Set_{< \omega}$, and a $\cV \in \rA_{\alpha}^I$, the relation $\Pi_I \cV = \Sigma_I \cV \in \rA_{\alpha}$ holds.
\end{crl}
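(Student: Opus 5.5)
The corollary has two parts. The first is the identity $\Pi_I \cV = \Sigma_I \cV$, which holds for finite $I$. The second is the membership in $\rA_{\alpha}$, which comes from Proposition \ref{stability} once we check that $I \in \Set_{< \kappa}$.

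For the identity, I will use the definition of $\Sigma_I \cV$ as a closed subspace of $\Pi_I \cV$. Let $w \in \Pi_I \cV$ and $\epsilon \in \R_{> 0}$. The set $\set{i \in I}{\n{w(i)} \geq \epsilon}$ is a subset of the finite set $I$, so it is finite. Hence $w \in \Sigma_I \cV$. Since the norm on $\Sigma_I \cV$ is the restriction of the supremum norm, the two spaces coincide as Banach $k$-vector spaces, not merely up to isomorphism.

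For the membership, note that $\kappa$ is a fixed infinite cardinal or the class $\Ord$. Since $I \in \Set_{< \omega}$, we have $\# I \in \omega \subset \kappa$, so $I \in \Set_{< \kappa}$. Proposition \ref{stability} applies to the triple $(\alpha,I,\cV)$ and gives $\rA_I \cV \in \rA_{\alpha}$. Combined with the identity, this yields $\Pi_I \cV = \Sigma_I \cV \in \rA_{\alpha}$.

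The corollary is stated with $\cV \in \rA_{\alpha}^I$ and conclusion in $\rA_{\alpha}$, so no cross-application from $\rB_{\alpha}$ is needed. There is therefore no real obstacle. The only point to verify is that Proposition \ref{stability} holds uniformly in both choices of $\rA$, and its proof does treat $\rA$ generically.
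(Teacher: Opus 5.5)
Your proposal is correct and is essentially the paper's proof: finiteness of $I$ gives $\Pi_I \cV = \Sigma_I \cV = \rA_I \cV$ exactly, and Proposition \ref{stability} gives membership in $\rA_{\alpha}$. One small point that both you and the paper gloss over is the case $\alpha = 0$: there the proof of Proposition \ref{stability} does not literally apply, since $\cR_{< 0} = \emptyset$ means no nonzero element of $F_k$ has the required form $\rA_{I_j} \cW_j$. That case is instead immediate from $\ell^{\infty}(\omega_{< m},k) \times \ell^{\infty}(\omega_{< n},k) \cong \ell^{\infty}(\omega_{< m + n},k)$.
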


\begin{proof}
The assertion follows from Proposition \ref{stability} and $\Pi_I \cV = \Sigma_I \cV = \rA_I \cV$.
\end{proof}

We give a non-Archimedean analogue of \cite{Eda83-2} Lemma 2:

\begin{prp}
\label{absorbing law}
For any $(V_0,V_1) \in F_k \times (\cR \setminus F_k)$, the relation $V_0 \times V_1 \cong V_1$ holds.
\end{prp}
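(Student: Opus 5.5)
The plan is to reduce to absorbing a single copy of $k$ and then run a transfinite induction on $\rank_{\cR}(V_1)$. Since $V_0 \in F_k$, we have $V_0 \cong \ell^{\infty}(\omega_{< n},k) \cong k^n$ with the supremum norm. Iterating the isomorphism $V_1 \cong k \times V_1$ gives $k^n \times V_1 \cong V_1$, using associativity of finite bounded direct products (the sup norm is associative). So it suffices to prove $k \times V_1 \cong V_1$ for every $V_1 \in \cR \setminus F_k$. I will do this by transfinite induction on $\alpha \coloneqq \rank_{\cR}(V_1)$.

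For the inductive step, write $V_1 \cong \rA_I \cW$ with $I \in \Set_{< \kappa}$ and $\cW \in \cR_{< \alpha}^I$. This is possible because $V_1 \in \rA_{\alpha} \setminus F_k$ for some choice of $\rA$. There are two cases.

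\textbf{Case 1: some $\cW(i_0)$ lies outside $F_k$.} The induction hypothesis applies to $\cW(i_0)$, whose rank is $< \alpha$, and gives $k \times \cW(i_0) \cong \cW(i_0)$. Then
\be
k \times \rA_I \cW \cong \rA_I \cW',
\ee
where $\cW'$ agrees with $\cW$ except that $\cW'(i_0) \coloneqq k \times \cW(i_0)$. This uses the regrouping isomorphism of Proposition \ref{stability}, and the sup norm makes it isometric for both $\Pi$ and $\Sigma$. Hence $k \times \rA_I \cW \cong \rA_I \cW \cong V_1$.

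\textbf{Case 2: every $\cW(i)$ lies in $F_k$.} Write $\cW(i) \cong k^{n_i}$ and flatten, as in the proof of Proposition \ref{stability}, to get $V_1 \cong \rA_{I'} (k)_{i \in I'}$ with $I' \coloneqq \bigsqcup_{i \in I} \ens{i} \times \omega_{< n_i}$. In other words $V_1 \cong \ell^{\infty}(I',k)$ or $V_1 \cong \rC_0(I',k)$. If $I'$ were finite, $V_1$ would be isomorphic to $k^{\# I'} \in F_k$, contradicting $V_1 \notin F_k$. So $I'$ is infinite, and there is a bijection $I' \sqcup \ens{\ast} \cong I'$. Then $k \times \rA_{I'}(k) \cong \rA_{I' \sqcup \ens{\ast}}(k) \cong \rA_{I'}(k)$, since reindexing along a bijection is an isometric isomorphism for both $\Pi$ and $\Sigma$. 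The case where $\cW(i) = 0$ for some $i$ (i.e.\ $n_i = 0$) is harmless, because those indices simply drop out of $I'$.

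The base of the induction is automatic: rank $0$ means $V_1 \in F_k$, which is excluded, so the argument starts at $\alpha \geq 1$. I expect the main point needing care to be Case 2, specifically checking that $V_1 \notin F_k$ really forces the flattened index set to be infinite. The rest is routine verification that the regrouping and reindexing maps are isometric; the sup norm on $k \times \cdot$ is compatible with both constructions.
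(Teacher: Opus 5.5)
Your proposal is correct and follows essentially the paper's proof: a transfinite induction on the rank, with the same case split. If every component lies in $F_k$, you flatten to $\rA_{I'}(k)_{i \in I'}$ with $I'$ infinite and absorb the extra coordinates by a bijection of index sets. Otherwise you absorb into a component outside $F_k$ via the induction hypothesis and regroup.

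The only difference is cosmetic: you first reduce to $V_0 = k$ by iterating, whereas the paper absorbs $V_0 \cong k^n$ in one step using $\#(J_0 \sqcup J_1) = \# J_1$.
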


\begin{proof}
It suffices to show that for any $(V_0,V_1) \in F_k \times (\cR_{\alpha} \setminus F_k)$, the relation $V_0 \times V_1 \cong V_1$ holds by transfinite induction on $\alpha$. Suppose $V_1 \in \rA_{\alpha} \setminus F_k$. By $V_1 \notin F_k$, there exists a pair $(I,\cW)$ of an $I \in \Set_{< \kappa}$ and a $\cW \in \cR_{< \alpha}^I$ with $V_1 \cong \rA_I \cW$. Replacing $I$ by $\set{i \in I}{\cW(i) \neq \ens{0}}$, we may assume $\cW(i) \neq \ens{0}$ for any $i \in I$. Set $I' \coloneqq \set{i \in I}{\cW(i) \in F_k}$.

\vs
First, suppose $I' = I$. Then we have $\rA_{I'} \cW \cong V_1 \notin F_k$, and hence $\# I' \geq \omega$. Set $J_0 \coloneqq \omega_{< \dim_k V_0}$ and $J_1 \coloneqq \bigsqcup_{i \in I'} \omega_{< \dim_k \cW(i)}$. We have $\# J_0 < \omega$ and $\# J_1 \geq \# I' \geq \omega$. This implies $\#(J_0 \sqcup J_1) = \# J_1$. We have
\be
V_0 \times V_1 \cong (\rA_{J_0} (k)_{j \in J_0}) \times (\rA_{J_1} (k)_{j \in J_1}) \cong \rA_{J_0 \sqcup J_1} (k)_{j \in J_0 \sqcup J_1} \cong \rA_{J_1} (k)_{j \in J_1} \cong V_1.
\ee
Next, suppose $I' \neq I$. Take an $i \in I \setminus I'$. By induction hypothesis, we have $V_0 \times \cW(i) \cong \cW(i)$. We obtain
\be
V_0 \times V_1 & \cong & V_0 \times \rA_I \cW \cong V_0 \times (\cW(i) \times \rA_{I \setminus \ens{i}} \cW) \\
& \cong & (V_0 \times \cW(i)) \times \rA_{I \setminus \ens{i}} \cW \cong \cW(i) \times \rA_{I \setminus \ens{i}} \cW \cong \rA_I \cW \cong V_1.
\ee
\end{proof}

\section{Pure Reid Hierarchy}
\label{Pure Reid Hierarchy}

Since $\cR$ is a non-Archimedean analogue of Reid class, its elements are analogous to $\Z$-kernel groups in the sense of \cite{Eda83-2}. We introduce a non-Archimedean analogue of the notion of proper $\Z$-kernel groups in the sense of \cite{Eda83-2}.

\begin{dfn}
For a $\kappa \in \Card$ and an $\alpha \in \Ord$, we define subsets $\rP \Pi_{< \alpha,\kappa}^{\circ}$, $\rP \Sigma_{< \alpha,\kappa}^{\circ}$, $\rP \cR_{< \alpha,\kappa}^{\circ}$, $\rP \Pi_{\alpha,\kappa}^{\circ}$, $\rP \Sigma_{\alpha,\kappa}^{\circ}$, and $\rP \cR_{\alpha,\kappa}^{\circ}$ of $\Ban(k)$ in the following recursive way:
\be
\rP \Pi_{< \alpha,\kappa}^{\circ} & \coloneqq & \bigcup_{\beta \in \alpha} \rP \Pi_{\beta,\kappa}^{\circ} \\
\rP \Sigma_{< \alpha,\kappa}^{\circ} & \coloneqq & \bigcup_{\beta \in \alpha} \rP \Sigma_{\beta,\kappa}^{\circ} \\
\rP \cR_{< \alpha,\kappa}^{\circ} & \coloneqq & \bigcup_{\beta \in \alpha} \rP \cR_{\beta,\kappa}^{\circ} \\
\rP \Pi_{\alpha,\kappa}^{\circ} & \coloneqq & 
\left\{
\begin{array}{ll}
\ens{k} & (\alpha = 0) \\
\displaystyle\set{\Pi_I \cW}{
\begin{array}{ll}
I \in \kappa \land \cW \in (\rP \cR_{< \alpha,\kappa}^{\circ})^I \land \\
\forall \beta \in \alpha \left[ \# \set{i \in I}{\cW(i) \in \rP \Sigma_{< \alpha,\kappa}^{\circ} \setminus \rP \Sigma_{< \beta,\kappa}^{\circ}} \geq \omega \right]
\end{array}
} & (\alpha \neq 0)
\end{array}
\right. \\
\rP \Sigma_{\alpha,\kappa}^{\circ} & \coloneqq & 
\left\{
\begin{array}{ll}
\ens{k} & (\alpha = 0) \\
\displaystyle\set{\Sigma_J \cV}{
\begin{array}{ll}
J \in \kappa \land \cV \in (\rP \cR_{< \alpha,\kappa}^{\circ})^J \land \\
\forall \beta \in \alpha \left[\# \set{j \in J}{\cV(j) \in \rP \Pi_{< \alpha,\kappa}^{\circ} \setminus \rP \Pi_{< \beta,\kappa}^{\circ}} \geq \omega \right]
\end{array}
} & (\alpha \neq 0)
\end{array}
\right. \\
\rP \cR_{\alpha,\kappa}^{\circ} & \coloneqq & \rP \Pi_{\alpha,\kappa}^{\circ} \cup \rP \Sigma_{\alpha,\kappa}^{\circ}
\ee
For a $\kappa \in \Card$ and an $\alpha \in \Ord$, we define essentially small subclasses $\rP \Pi_{< \alpha,\kappa}$, $\rP \Sigma_{< \alpha,\kappa}$, $\rP \cR_{< \alpha,\kappa}$, $\rP \Pi_{\alpha,\kappa}$, $\rP \Sigma_{\alpha,\kappa}$, and $\rP \cR_{\alpha,\kappa}$ of $\Ban(k)$ in the following way:
\be
\rP \Pi_{< \alpha,\kappa} & \coloneqq & \Isom(\rP \Pi_{< \alpha,\kappa}^{\circ}) \\
\rP \Sigma_{< \alpha,\kappa} & \coloneqq & \Isom(\rP \Sigma_{< \alpha,\kappa}^{\circ}) \\
\rP \cR_{< \alpha,\kappa} & \coloneqq & \Isom(\rP \cR_{< \alpha,\kappa}^{\circ}) \\
\rP \Pi_{\alpha,\kappa} & \coloneqq & \Isom(\rP \Pi_{\alpha,\kappa}^{\circ}) \\
\rP \Sigma_{\alpha,\kappa} & \coloneqq & \Isom(\rP \Sigma_{\alpha,\kappa}^{\circ}) \\
\rP \cR_{\alpha,\kappa} & \coloneqq & \Isom(\rP \cR_{\alpha,\kappa}^{\circ})
\ee
For an $\alpha \in \Ord$, we define subclasses $\rP \Pi_{< \alpha,\Ord}$, $\rP \Sigma_{< \alpha,\Ord}$, $\rP \cR_{< \alpha,\Ord}$, $\rP \Pi_{\alpha,\Ord}$, $\rP \Sigma_{\alpha,\Ord}$, and $\rP \cR_{\alpha,\Ord}$ of $\Ban(k)$ in the following way:
\be
\rP \Pi_{< \alpha,\Ord} & \coloneqq & \bigcup_{\kappa \in \Card} \rP \Pi_{< \alpha,\kappa} \\
\rP \Sigma_{< \alpha,\Ord} & \coloneqq & \bigcup_{\kappa \in \Card} \Sigma \cR_{< \alpha,\kappa} \\
\rP \cR_{< \alpha,\Ord} & \coloneqq & \bigcup_{\kappa \in \Card} \rP \cR_{< \alpha,\kappa} \\
\rP \Pi_{\alpha,\Ord} & \coloneqq & \bigcup_{\kappa \in \Card} \rP \Pi_{\alpha,\kappa} \\
\rP \Sigma_{\alpha,\Ord} & \coloneqq & \bigcup_{\kappa \in \Card} \rP \Sigma_{\alpha,\kappa} \\
\rP \cR_{\alpha,\Ord} & \coloneqq & \bigcup_{\kappa \in \Card} \rP \cR_{\alpha,\kappa}
\ee
\end{dfn}

In the following in this paper, let $\kappa$ denote either a fixed infinite cardinal or the class $\Ord$. We omit the occurrence of ``$,\kappa$'' in subscripts in the notions defined above.

\begin{dfn}
For any $(\alpha,I) \in \Ord \times \Set_{< \kappa}$, we denote by $\rP \cR_{< \alpha}^{I \to \rA}$ the class of $\cW \in \rP \cR_{< \alpha}^I$ such that for any $\beta \in \alpha$, the inequality $\# \set{i \in I}{\cW(i) \in \rP \rA_{< \alpha} \setminus \rP \rA_{< \beta}} \geq \omega$ holds.
\end{dfn}

Since $\Pi$ and $\Sigma$ preserve isomorphisms, we have
\be
\rP \Pi_{< \alpha} & = & \bigcup_{\beta \in \alpha} \rP \Pi_{\beta} \subset \cR_{< \alpha} \\
\rP \Sigma_{< \alpha} & = & \bigcup_{\beta \in \alpha} \rP \Sigma_{\beta} \subset \cR_{< \alpha} \\
\rP \cR_{< \alpha} & = & \bigcup_{\beta \in \alpha} \rP \cR_{\beta} \subset \cR_{< \alpha} \\
\rP \Pi_{\alpha} & = & 
\left\{
\begin{array}{ll}
\Isom(\ens{k}) & (\alpha = 0) \\
\displaystyle\set{V \in \Ban(k)}{\exists I \in \Set_{< \kappa}[\exists \cW \in \rP \cR_{< \alpha}^{I \to \Sigma} [V \cong \Pi_I \cW]]} & (\alpha \neq 0)
\end{array}
\right. \subset \Pi_{\alpha} \\
\rP \Sigma_{\alpha} & = &
\left\{
\begin{array}{ll}
\Isom(\ens{k}) & (\alpha = 0) \\
\displaystyle\set{V \in \Ban(k)}{\exists J \in \Set_{< \kappa}[\exists \cV \in \rP \cR_{< \alpha}^{J \to \Pi}[V \cong \Sigma_J \cV]]} & (\alpha \neq 0)
\end{array}
\right. \subset \Sigma_{\alpha} \\
\rP \cR_{\alpha} & = & \rP \Pi_{\alpha} \cup \rP \Sigma_{\alpha} \subset \cR_{\alpha}
\ee
for any $\alpha \in \Ord$ by transfinite induction on $\alpha$. We call these hierarchies {\it pure Reid hierarchy (with respect to $\kappa$)}. Although the definition might not look like a direct analogue of that of proper $\Z$-kernel groups because of the lack of the case classification on whether $\alpha$ is a limit ordinal or not, the minor modification is just for simplicity (cf.\ Proposition \ref{degeneracy of mixed} and Proposition \ref{pure convergent Limit stabilty}). We set
\be
\rP \Pi & \coloneqq & \bigcup_{\alpha \in \Ord} \rP \Pi_{\alpha} \\
\rP \Sigma & \coloneqq & \bigcup_{\alpha \in \Ord} \rP \Sigma_{\alpha} \\
\rP \cR & \coloneqq & \bigcup_{\alpha \in \Ord} \rP \cR_{\alpha},
\ee
and call $\rP \cR$ {\it pure Reid class of Banach $k$-vector spaces (with respect to $\kappa$)}. For a $V \in \rP \Pi$ (resp.\ $\rP \Sigma$), we denote by $\rank_{\rP \Pi}(V)$ (resp.\ $\rank_{\rP \Sigma}(V)$, $\rank_{\rP \cR}(V)$) the least ordinal $\alpha$ such that $V \in \rP \Pi_{\alpha}$ (resp.\ $\rP \Sigma_{\alpha}$, $\rP \cR_{\alpha}$), which exists by the definition of $\rP \Pi$ (resp.\ $\rP \Sigma$, $\rP \cR$) and the well-foundedness of $\Ord$. We study basic properties of pure Reid hierarchy.

\begin{prp}
\label{pure rank 0}
Let $\alpha \in \Ord$. Then the following hold:
\bi
\item[(1)] If $\alpha = 0$, then the relation $\rP \Pi_{\alpha} = \rP \Sigma_{\alpha} = \rP \cR_{\alpha} = \Isom(\ens{k}) \subset F_k$ holds.
\item[(2)] If $\alpha \neq 0$, then the equality $\rP \cR_{\alpha} \cap F_k = \emptyset$ holds.
\ei
\end{prp}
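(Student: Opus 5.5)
Part (1) is a direct unfolding of the definitions. For (2), the plan is to show that every member of $\rP \cR_{\alpha}$ with $\alpha \neq 0$ is infinite-dimensional over $k$. This suffices because every member of $F_k$ is isometrically isomorphic to some $\ell^{\infty}(\omega_{< n},k) = k^n$, hence finite-dimensional, and isomorphisms preserve $\dim_k$.

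For (1), with $\alpha = 0$ the definitions give $\rP \Pi_0^{\circ} = \rP \Sigma_0^{\circ} = \ens{k}$, hence $\rP \cR_0^{\circ} = \ens{k}$. Applying $\Isom$ yields $\rP \Pi_0 = \rP \Sigma_0 = \rP \cR_0 = \Isom(\ens{k})$. Since $k \cong \ell^{\infty}(\omega_{< 1},k) \in F_k^{\circ}$ and $F_k = \Isom(F_k^{\circ})$, we get $\Isom(\ens{k}) \subset F_k$.

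For (2), I would first prove the auxiliary claim that every $V \in \rP \cR$ is nonzero, by transfinite induction on $\rank_{\rP \cR}(V)$.
\begin{itemize}
\item At rank $0$ we have $V \cong k \neq \ens{0}$.
\item At rank $\alpha \neq 0$, suppose $V \cong \rA_I \cW$ with $\cW \in \rP \cR_{< \alpha}^{I \to \rB}$. Apply the defining condition with $\beta = 0 \in \alpha$. Since $\rP \rB_{< 0} = \emptyset$, the set $I_1 \coloneqq \set{i \in I}{\cW(i) \in \rP \rB_{< \alpha}}$ is infinite, and in particular nonempty.
\item By the induction hypothesis each $\cW(i)$ with $i \in I_1$ is nonzero. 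Any nonzero element of a single coordinate, extended by zero, lies in $\rA_I \cW$. Hence $V \neq \ens{0}$.
\end{itemize}

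Now take $V \in \rP \cR_{\alpha}$ with $\alpha \neq 0$, so $V \cong \rA_I \cW$ with the infinite set $I_1 \subset I$ as above and every $\cW(i)$ nonzero by the claim. For each $i \in I_1$ choose a nonzero $w_i \in \cW(i)$ and let $\sigma_{\ens{i}}(w_i)$ be its zero extension to $\rA_I \cW$; this is a bounded vector supported at one coordinate, so it lies in both $\Pi_I \cW$ and $\Sigma_I \cW$. These vectors have pairwise distinct singleton supports, so they are linearly independent. Thus $\dim_k V \geq \# I_1 \geq \omega$, and $V \notin F_k$. The only mildly delicate step is realising that infinitude of the index set alone is not enough: the nonvanishing of the components is needed, which is why the auxiliary induction comes first. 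Everything else is routine.
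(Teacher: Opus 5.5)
Your proof is correct and follows essentially the paper's argument. Part (1) is obtained by unfolding the definitions, and part (2) by transfinite induction showing that the components $\cW(i)$ are nonzero and the index set is infinite, so that $\dim_k V \geq \omega$. The one difference is that the paper gets $\cW(i) \neq \ens{0}$ directly from the induction hypothesis of (2) itself, because $\ens{0} = \ell^{\infty}(\omega_{< 0},k) \in F_k$; your separate auxiliary induction on nonvanishing can therefore be folded into the main one.
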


\begin{proof}
The assertion (1) follows directly from the definition. We show the assertion (2) by transfinite induction on $\alpha$. Let $V \in \rP \rA_{\alpha}$. Take a pair $(I,\cW)$ of an $I \in \Set_{< \kappa}$ and a $\cW \in \rP \cR_{< \alpha}^{I \to \rB}$ with $V \cong \rA_I \cW$. By (1) and induction hypothesis, we have $\cW(i) \neq \ens{0}$ for any $i \in I$. By $\cW \in \rP \cR_{< \alpha}^{I \to \rB}$, we have $\# I \geq \omega$. Therefore, we obtain $\dim_k V \geq \omega$, i.e.\ $V \notin F_k$.
\end{proof}

\begin{prp}
\label{pure convergent successor stabilty}
For any tuple $(\alpha,J,\cV)$ of an $\alpha \in \Suc$, a $J \in \Set_{< \kappa}$, and a $\cV \in \rP \cR_{< \alpha}^{J \to \rA}$, the relation $\rA_J \cV \in \rP \rA_{\alpha_{-}}$ holds.
\end{prp}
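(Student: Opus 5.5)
The plan is to flatten the outer $\rA$-sum or product through its $\rA$-type components, as in the proof of Proposition \ref{stability}, and then check the cofinality condition in the definition of $\rP \rA_{\alpha_{-}}$.

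\textbf{Step 1: the components of top rank.} Set $\gamma \coloneqq \alpha_{-}$. Apply the defining condition of $\rP \cR_{< \alpha}^{J \to \rA}$ with $\beta = \gamma \in \alpha$. This gives an infinite set
\be
J_1 \coloneqq \set{j \in J}{\cV(j) \in \rP \rA_{< \alpha} \setminus \rP \rA_{< \gamma}}.
\ee
Every $j \in J_1$ satisfies $\rank_{\rP \rA}(\cV(j)) = \gamma$.

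\textbf{Step 2: decompose the $\rA$-type components.} For each $j \in J$ with $\cV(j) \in \rP \rA_{\delta_j}$ and $\delta_j \neq 0$, choose $I_j \in \Set_{< \kappa}$ and $\cW_j \in \rP \cR_{< \delta_j}^{I_j \to \rB}$ with $\cV(j) \cong \rA_{I_j} \cW_j$. Every remaining component is left untouched and regarded as a family indexed by the singleton $\ens{j}$. Put $I \coloneqq \bigsqcup_j \ens{j} \times I_j$ and let $\cW \in \Ban(k)^I$ be the concatenated family. Associativity of $\rA$, exactly as in Proposition \ref{stability}, then gives $\rA_J \cV \cong \rA_I \cW$.

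\textbf{Step 3: check $\cW \in \rP \cR_{< \gamma}^{I \to \rB}$.}
\bi
\item[(a)] Entries coming from a decomposed $\cV(j)$ lie in $\rP \cR_{< \delta_j} \subset \rP \cR_{< \gamma}$, because $\delta_j \leq \gamma$.
\item[(b)] For the cofinality condition, fix $\beta \in \gamma$. Each $j \in J_1$ has $\delta_j = \gamma$. So by $\cW_j \in \rP \cR_{< \gamma}^{I_j \to \rB}$, this $j$ alone contributes infinitely many indices $(j,i)$ with $\cW((j,i)) \in \rP \rB_{< \gamma} \setminus \rP \rB_{< \beta}$. Hence the required set has cardinality at least $\omega$.
\item[(c)] Once (a) and (b) hold, the description of $\rP \rA_{\gamma}$ for $\gamma \neq 0$ yields $\rA_I \cW \in \rP \rA_{\gamma}$.
\ei

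\textbf{Main obstacle.} The hard part is the membership $\cW(i) \in \rP \cR_{< \gamma}$ for the components that were not decomposed.
\bi
\item[(a)] \emph{$\rB$-type components of $\cV$ of rank exactly $\gamma$.} Here I would first try to show that they cannot occur, or that they can be merged into the $\rA$-structure. My first attempt is to use Proposition \ref{pure rank 0} together with an argument modelled on Proposition \ref{absorbing law}.
\item[(b)] \emph{The degenerate case $\gamma = 0$.} In this case the unchanged components are copies of $k$. Here I would argue directly with the rank-$0$ description in Proposition \ref{pure rank 0}(1).
\ei
I have not verified that either of these sub-steps goes through as planned; this is the step I expect to be the real difficulty of the proof.
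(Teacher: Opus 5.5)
Your Steps 1--3 reproduce the paper's argument: flatten the $\rA$-type entries as in Proposition \ref{stability}, then read off the cofinality condition from the infinitely many entries of $\rP \rA$-rank $\alpha_{-}$. Using a single $j \in J_1$, which already contributes infinitely many indices, is a harmless shortcut for the paper's choice of one $i_j$ for each $j \in J_0$.

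The proposal is still not a proof, because the two cases you leave open cannot be closed. In both of them the statement, read literally, is false, so no argument via Proposition \ref{pure rank 0} or Proposition \ref{absorbing law} will remove them.

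\textbf{The case $\gamma = 0$.} Here $\alpha = 1$. The condition for $\beta = 0$ forces $J$ to be infinite and every $\cV(j)$ to be isomorphic to $k$. So $\rA_J \cV$ is isomorphic to $\ell^{\infty}(J,k)$ or $\rC_0(J,k)$. This is infinite-dimensional, hence not in $\rP \rA_0 = \Isom(\ens{k})$. The correct conclusion in this case is membership in $\rP \rA_1$.

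\textbf{$\rB$-type entries of rank $\gamma$.} Take $\rA = \Pi$, $\alpha = 2$, $J = \omega \sqcup \ens{\ast}$, $\cV(n) = \ell^{\infty}(\omega,k)$ for $n \in \omega$, and $\cV(\ast) = \rC_0(\omega,k)$. Then $\cV \in \rP \cR_{< 2}^{J \to \Pi}$ and $\Pi_J \cV \cong \ell^{\infty}(\omega,k) \times \rC_0(\omega,k)$.
Every element of $\rP \Pi_1$ is isomorphic to some $\ell^{\infty}(I,k)$. So $\Pi_J \cV \in \rP \Pi_1$ would make $\rC_0(\omega,k)$ a reductive summand of $\ell^{\infty}(I,k)$.
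Now assume $\v{k}$ is dense in $\R_{\geq 0}$ and $\kappa$ satisfies the non-measurability condition (e.g.\ $k = \Cp$). Proposition \ref{factorisation pair reduction} (1) would then make the infinite-dimensional $\rC_0(\omega \setminus J_0,k)$ a reductive summand of the finite-dimensional $\ell^{\infty}(I_0,k)$. That is absurd, because $\n{\id - \pi \circ \sigma} < 1$ forces $\sigma$ to be injective.

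Both examples need $\kappa \neq \omega$; for $\kappa = \omega$ the hypothesis is never satisfied.

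So what is missing is a hypothesis, not an idea. The paper's own proof tacitly assumes that every $\cV(j)$ admits a decomposition with $\cW_j \in \rP \cR_{< \rank_{\rP \rA}(\cV(j))}^{I_j \to \rB}$, i.e.\ that all entries are of $\rA$-type. That is exactly the situation in which the paper applies the proposition (Proposition \ref{pure non-convergent stabilty}, where $\cV \in \rP \rA_{< \alpha}^J$).

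Add the hypotheses $\alpha_{-} \neq 0$ and $\cV(j) \in \rP \rA_{< \alpha} \cup \rP \cR_{< \alpha_{-}}$ for all $j \in J$. Then your Steps 1--3 form a complete proof. Your choice to keep the entries $\cV(j) \cong k$ as untouched singletons is then actually necessary. For those entries the paper's decomposition does not exist: since $\rP \cR_{< 0} = \emptyset$, there is no $\cW_j \in \rP \cR_{< 0}^{I_j \to \rB}$ with $\rA_{I_j} \cW_j \cong k$.
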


\begin{proof}
We follow the convention in the proof of Proposition \ref{stability} except for the additional condition $\cW_j \in \rP \cR_{< \rank_{\rP \rA}(\cV(j))}^{I_j \to \rB}$. It suffices to show $\cW \in \rP \cR_{< \alpha_{-}}^{I \to \rB}$. Let $\beta \in \alpha_{-}$. Set $J_0 \coloneqq \set{j \in J}{\cV(j) \in \rP \rA_{< \alpha} \setminus \rP \rA_{< \alpha_{-}}}$ and $I_0 \coloneqq \set{(j,i) \in I}{\cW((j,i)) \in \rP \rB_{< \alpha_{-}} \setminus \rP \rB_{< \beta}}$. It suffices to show $\# I_0 \geq \omega$.

\vs
For each $j \in J_0$, we have $\rank_{\rP \rA}(\cV(j)) = \alpha_{-}$, and hence there exists an $i_j \in I_j$ such that $\cW_j(i_j) \in \rP \rB_{\alpha_{-}} \setminus \rP \rB_{\beta}$ by $\cW_j \in \rP \cR_{< \rank_{\rP \rA}(\cV(j))}^{I_j \to \rB}$ and $\beta \in \alpha_{-}$. We have $\set{(j,i_j)}{j \in J_0} \subset I_0$, and hence $\# I_0 \geq \# J_0 \geq \omega$ by $\cV \in \rP \cR_{< \alpha}^{J \to \rA}$.
\end{proof}

\begin{prp}
\label{pure finite product}
For any tuple $(\alpha_0,\alpha_1,V_0,V_1)$ of an $\alpha_0 \in \Ord$, an $\alpha_1 \in \Ord$, a $V_0 \in \rP \rA_{\alpha_0}$, and a $V_1 \in \rP \rA_{\alpha_1}$, the relation $V_0 \times V_1 \in \rP \rA_{\max \ens{\alpha_0,\alpha_1}}$ holds.
\end{prp}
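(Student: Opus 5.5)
The plan is to realise $V_0 \times V_1$ as a single $\rA$-construction over the disjoint union of the two index sets. This is the same gluing as in the proof of Proposition \ref{stability}, applied to a two-element family. Set $\alpha \coloneqq \max \ens{\alpha_0,\alpha_1}$. By symmetry assume $\alpha = \alpha_1$, so that $V_1 \in \rP \rA_{\alpha}$.

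\emph{Main case $\alpha \neq 0$.} Take a pair $(I_1,\cW_1)$ with $I_1 \in \Set_{< \kappa}$, $\cW_1 \in \rP \cR_{< \alpha}^{I_1 \to \rB}$ and $V_1 \cong \rA_{I_1} \cW_1$. For $V_0$ there are two subcases.
\bi
\item[(a)] If $\alpha_0 \neq 0$, take $(I_0,\cW_0)$ with $\cW_0 \in \rP \cR_{< \alpha_0}^{I_0 \to \rB}$ and $V_0 \cong \rA_{I_0} \cW_0$.
\item[(b)] If $\alpha_0 = 0$, then $V_0 \cong k$ by Proposition \ref{pure rank 0} (1). Put $I_0 \coloneqq \ens{\ast}$ and $\cW_0(\ast) \coloneqq k$, which lies in $\rP \cR_0 \subset \rP \cR_{< \alpha}$ because $0 \in \alpha$. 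Here $V_0 \cong \rA_{I_0} \cW_0$ holds trivially, since a one-point product equals a one-point sum.
\ei
Now set $I \coloneqq I_0 \sqcup I_1 \in \Set_{< \kappa}$ ($\kappa$ is infinite) and let $\cW$ be the glued family. The canonical identification
\[
\rA_{I_0} \cW_0 \times \rA_{I_1} \cW_1 \cong \rA_I \cW
\]
is isometric, because both sides carry the supremum norm and the $\Sigma$-condition splits over a finite partition of the index set. Hence $V_0 \times V_1 \cong \rA_I \cW$.

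It remains to check $\cW \in \rP \cR_{< \alpha}^{I \to \rB}$.
\bi
\item[(i)] Every $\cW(i)$ lies in $\rP \cR_{< \alpha}$, because $\rP \cR_{< \alpha_0} \subset \rP \cR_{< \alpha}$ and $k \in \rP \cR_{< \alpha}$.
\item[(ii)] For each $\beta \in \alpha$, the set of $i \in I$ with $\cW(i) \in \rP \rB_{< \alpha} \setminus \rP \rB_{< \beta}$ contains the corresponding subset of $I_1$. That subset is infinite since $\cW_1 \in \rP \cR_{< \alpha}^{I_1 \to \rB}$, so the count is at least $\omega$.
\ei
Therefore $V_0 \times V_1 \in \rP \rA_{\alpha}$. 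The only point needing care is that the extra indices coming from $V_0$ cannot destroy the cardinality condition. This holds because that condition only requires many indices of high rank, and adding indices never decreases that count.

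\emph{Degenerate case $\alpha_0 = \alpha_1 = 0$.} Here $V_0 \cong V_1 \cong k$ and $V_0 \times V_1 \cong \ell^{\infty}(\omega_{< 2},k)$. The gluing above does not apply, since $\rP \cR_{< 0} = \emptyset$ and the only member of $\rP \rA_0$ up to isomorphism is $k$ itself (Proposition \ref{pure rank 0} (1)). So this case must be settled by comparing $\ell^{\infty}(\omega_{< 2},k)$ with $\Isom(\ens{k})$ directly, and I expect it to be the main obstacle. The first thing I would try is a dimension count: $\dim_k$ is preserved by isometric isomorphisms, and $\dim_k \ell^{\infty}(\omega_{< 2},k) = 2$ while $\dim_k k = 1$. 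This comparison decides the case.

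If it shows that $V_0 \times V_1$ is not isomorphic to $k$, the conclusion in this case can only be salvaged by weakening it. The weaker statement would be that $V_0 \times V_1$ lies in $F_k \subset \Delta_0$ (Proposition \ref{rank 0}), which is what the subsequent uses of the proposition need. In that event I would state this restriction explicitly alongside the proposition, rather than claim the conclusion in $\rP \rA_0$.
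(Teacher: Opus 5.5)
Your gluing over $I_0 \sqcup I_1$, with the infinitude condition inherited from the factor of maximal rank, is exactly the paper's argument. Where the statement is true, your version is more careful than the paper's. The paper takes, for both $h$, a representation $V_h \cong \rA_{I_h} \cW_h$ with $\cW_h \in \rP \cR_{< \alpha_h}^{I_h \to \rB}$. For $\alpha_h = 0$ no such representation exists: $\rP \cR_{< 0} = \emptyset$ forces $I_h = \emptyset$, and then $\rA_{I_h} \cW_h = \ens{0} \not\cong k$. Your subcase (b) closes this gap when exactly one of $\alpha_0,\alpha_1$ is $0$. It is the argument of Proposition \ref{mixed finite product} with $W = k \in \rP \rB_0$.

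In the case $\alpha_0 = \alpha_1 = 0$ you should not hedge, because the dimension count is decisive. An isometric isomorphism is a $k$-linear bijection, and $\dim_k (k \times k) = 2 \neq 1 = \dim_k k$. Since $\rP \rA_0 = \Isom(\ens{k})$ by Proposition \ref{pure rank 0} (1), we get $k \times k \notin \rP \rA_0$, so the proposition is false as stated. Say so outright, and state the result under the hypothesis $\max \ens{\alpha_0,\alpha_1} \neq 0$, which your main case proves completely.

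Your closing claim that the weakening to $F_k$ is ``what the subsequent uses need'' is not justified. For instance, Proposition \ref{pure non-convergent stabilty} applies this result to a finite family that may consist only of copies of $k$, and it needs the product to lie in $\rP \rA_{< \alpha}$. Its own conclusion fails there: take $\alpha = 1$ and $\cV = (k,k)$. So downstream one needs the positive-rank hypothesis, or a separate treatment of copies of $k$, not merely membership in $F_k$.
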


\begin{proof}
For each $h \in \ens{0,1}$, take a pair $(I_h,\cW_h)$ of an $I_h \in \Set_{< \kappa}$ and a $\cW_h \in \rP \cR_{< \alpha_h}^{I \to \rB}$ with $V_h \cong \rA_{I_h} \cW_h$. Set $\alpha \coloneqq \max \ens{\alpha_0,\alpha_1}$, $V \coloneqq V_0 \times V_1$, and $I \coloneqq I_0 \sqcup I_1$. We define $\cW \in \rP \cR_{< \alpha}^I$ by
\be
\cW(i) \coloneqq
\left\{
\begin{array}{ll}
\cW_0(i) & (i \in I_0) \\
\cW_1(i) & (i \in I_1)
\end{array}
\right..
\ee
We have $V \cong (\rA_{I_0} \cW_0) \times (\rA_{I_1} \cW_1) \cong \rA_I \cW$. It suffices to show $\cW \in \rP \cR_{< \alpha}^{I \to \rB}$. Let $\beta \in \alpha$. By the definition of $\alpha$, we have $\alpha = \alpha_h$ for some $h \in \ens{0,1}$. We obtain
\be
\# \set{i \in I}{\cW(i) \in \rB_{< \alpha} \setminus \rB_{< \beta}} \geq \# \set{i \in I_h}{\cW_h(i) \in \rB_{< \alpha_h} \setminus \rB_{< \beta}} \geq \omega
\ee
by $\beta \in \alpha = \alpha_h$, and hence $\cW \in \rP \cR_{< \alpha}^{I \to \rB}$.
\end{proof}

\begin{prp}
\label{mixed finite product}
For any tuple $(\alpha,\beta,V,W)$ of an $\alpha \in \Ord$, a $\beta \in \alpha$, a $V \in \rP \rA_{\alpha}$, and a $\rP \rB_{\beta}$, the relation $V \times W \in \rP \rA_{\alpha}$ holds.
\end{prp}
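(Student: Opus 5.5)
The plan is to absorb $W$ as one additional coordinate of a presentation of $V$, in the same way as in the proof of Proposition \ref{pure finite product}. Since $\beta \in \alpha$, we have $\alpha \neq 0$, so by the description of $\rP \rA_{\alpha}$ for $\alpha \neq 0$ there is a pair $(I,\cW)$ of an $I \in \Set_{< \kappa}$ and a $\cW \in \rP \cR_{< \alpha}^{I \to \rB}$ with $V \cong \rA_I \cW$. Set $I' \coloneqq I \sqcup \ens{\ast}$ and define $\cW' \in \Ban(k)^{I'}$ by $\cW' \upharpoonright I \coloneqq \cW$ and $\cW'(\ast) \coloneqq W$. The product $V \times W$ is a finite product, so it is simultaneously the bounded direct product and the completed direct sum of the two factors. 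Hence
\[
V \times W \cong (\rA_I \cW) \times \rA_{\ens{\ast}} (W) \cong \rA_{I'} \cW'.
\]
This holds for both choices of $\rA$, since the supremum norm on the disjoint union splits, and the decay condition for $\Sigma$ only constrains cofinitely many indices. We also have $\# I' \in \kappa$, because $\kappa$ is infinite or $\Ord$, so $I' \in \Set_{< \kappa}$.

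It remains to check that $\cW' \in \rP \cR_{< \alpha}^{I' \to \rB}$. First, $W \in \rP \rB_{\beta} \subset \rP \cR_{\beta} \subset \rP \cR_{< \alpha}$ because $\beta \in \alpha$, and $\cW(i) \in \rP \cR_{< \alpha}$ for every $i \in I$. Hence $\cW' \in \rP \cR_{< \alpha}^{I'}$. Second, for every $\gamma \in \alpha$ we have
\[
\set{i \in I'}{\cW'(i) \in \rP \rB_{< \alpha} \setminus \rP \rB_{< \gamma}} \supset \set{i \in I}{\cW(i) \in \rP \rB_{< \alpha} \setminus \rP \rB_{< \gamma}}.
\]
The right-hand side has cardinality at least $\omega$ by the hypothesis $\cW \in \rP \cR_{< \alpha}^{I \to \rB}$. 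Therefore $V \times W \in \rP \rA_{\alpha}$.

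The only delicate point is the bookkeeping in the second check. The extra coordinate $W$ could make the witness set larger, which is harmless, but it can never be needed, because the cardinality condition is a lower bound. So the argument is monotone in adding coordinates, and the only real requirement is that $W$ lies in $\rP \cR_{< \alpha}$, which is exactly where $\beta \in \alpha$ is used. I expect no genuine obstacle beyond stating the isomorphism $(\rA_I \cW) \times W \cong \rA_{I \sqcup \ens{\ast}} \cW'$ carefully, as an isometry, for both $\rA = \Pi$ and $\rA = \Sigma$.
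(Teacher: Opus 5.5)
Your proof is correct and is essentially the paper's argument. You take a presentation $V \cong \rA_I \cW$ with $\cW \in \rP \cR_{< \alpha}^{I \to \rB}$, adjoin $W$ as one extra coordinate indexed by $\ast$, and observe that the cardinality condition survives because adding a coordinate only enlarges the witness set; your write-up also states explicitly that $\alpha \neq 0$ and that $W \in \rP \cR_{< \alpha}$ uses $\beta \in \alpha$, which the paper's proof leaves implicit.
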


\begin{proof}
Take a pair $(I_0,\cW_0)$ of an $I_0 \in \Set_{< \kappa}$ and a $\cW_0 \in \rP \cR_{< \alpha}^{I \to \rB}$ with $V \cong \rA_{I_0} \cW_0$. Set $I \coloneqq I_0 \sqcup \ens{\ast}$. We define $\cW \in \rP \cR_{< \alpha}^I$ by
\be
\cW(i) \coloneqq
\left\{
\begin{array}{ll}
\cW_0(i) & (i \in I_0) \\
W & (i = \ast)
\end{array}
\right..
\ee
We have $V \times W \cong (\rA_{I_0} \cW) \times \cW(\ast) \cong \rA_I \cW$. For any $\beta \in \alpha$, we have
\be
\# \set{i \in I}{\cW(i) \in \rB_{< \alpha} \setminus \rB_{< \beta}} \geq \# \set{i \in I_0}{\cW_0(i) \in \rB_{< \alpha} \setminus \rB_{< \beta}} \geq \omega.
\ee
This implies $\cW \in \rP \cR_{< \alpha}^{I \to \rB}$ and hence $V \times W \in \rP \rA_{\alpha}$.
\end{proof}

We introduce a non-Archimedean analogue of the notion of improper $\Z$-kernel groups. For each $\alpha \in \Ord$, we set
\be
\rM \cR_{\alpha} \coloneqq \set{V \in \Ban(k)}{\exists (V_{\Pi},V_{\Sigma}) \in \rP \Pi_{\alpha} \times \rP \Sigma_{\alpha}[V \cong V_{\Pi} \times V_{\Sigma}]}.
\ee
For each $\alpha \in \Ord$, we set
\be
\rM \cR_{< \alpha} \coloneqq \bigcup_{\beta \in \alpha} \rM \cR_{\alpha}.
\ee
This gives the ``mixed part'' of Reid hierarchy. The following two propositions indicate that the mixed part matters only when we consider a limit ordinal $\alpha$:

\begin{prp}
\label{degeneracy of mixed}
Let $\alpha \in \Ord$. The the following hold:
\bi
\item[(1)] The inclusion $\rM \cR_{\alpha} \subset \Delta_{\alpha + 1} \subset \cR_{\alpha + 1}$ holds.
\item[(2)] If $\alpha = 0$, then the inclusion $\rM \cR_{\alpha} \subset F_k$ holds.
\item[(3)] If $\alpha$ is a non-zero limit ordinal, then the inclusion $\rM \cR_{< \alpha} \subset \cR_{< \alpha}$ holds.
\ei
\end{prp}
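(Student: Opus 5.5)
We prove the three assertions separately, each by unwinding the definitions and using the inclusions $\rP \Pi_{\alpha} \subset \Pi_{\alpha}$ and $\rP \Sigma_{\alpha} \subset \Sigma_{\alpha}$ recorded after the definition of pure Reid hierarchy.

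For (1), let $V \in \rM \cR_{\alpha}$ and take $(V_{\Pi},V_{\Sigma}) \in \rP \Pi_{\alpha} \times \rP \Sigma_{\alpha}$ with $V \cong V_{\Pi} \times V_{\Sigma}$. Then $V_{\Pi} \in \Pi_{\alpha} \subset \cR_{\alpha}$ and $V_{\Sigma} \in \Sigma_{\alpha} \subset \cR_{\alpha}$. Hence the two-member family $(V_{\Pi},V_{\Sigma})$ indexed by $\ens{0,1}$ lies in $\cR_{< \alpha + 1}^{\ens{0,1}}$. Its index set belongs to $\Set_{< \kappa}$ because $\kappa$ is infinite. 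Since $V \cong \Pi_{\ens{0,1}} (V_{\Pi},V_{\Sigma}) = \Sigma_{\ens{0,1}} (V_{\Pi},V_{\Sigma})$ (finite products and sums agree), the displayed characterisations of $\Pi_{\alpha+1}$ and $\Sigma_{\alpha+1}$ give $V \in \Pi_{\alpha+1} \cap \Sigma_{\alpha+1} = \Delta_{\alpha+1}$. The inclusion $\Delta_{\alpha+1} \subset \cR_{\alpha+1}$ is immediate from the definitions.

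For (2), Proposition \ref{pure rank 0} (1) gives $\rP \Pi_0 = \rP \Sigma_0 = \Isom(\ens{k})$. So any $V \in \rM \cR_0$ satisfies $V \cong k \times k \cong \ell^{\infty}(\omega_{< 2},k) \in F_k$, and $F_k$ is closed under isomorphism.

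For (3), let $\alpha$ be a non-zero limit ordinal and $V \in \rM \cR_{< \alpha}$. The definition as typed reads $\bigcup_{\beta \in \alpha} \rM \cR_{\alpha}$; this is evidently a typo for $\bigcup_{\beta \in \alpha} \rM \cR_{\beta}$, and we read it that way. Then $V \in \rM \cR_{\beta}$ for some $\beta \in \alpha$. By (1), $V \in \cR_{\beta + 1}$. Since $\alpha$ is a limit ordinal, $\beta + 1 \in \alpha$, so $V \in \cR_{< \alpha}$.

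None of these steps is a real obstacle; the only point needing care is the typo in the definition of $\rM \cR_{< \alpha}$. Under the literal reading, $\rM \cR_{< \alpha} = \rM \cR_{\alpha}$ for $\alpha \neq 0$, and then $\rM \cR_{\alpha} \subset \cR_{< \alpha}$ would require a genuinely different argument. That argument would combine the two index families of $V_{\Pi}$ and $V_{\Sigma}$ as in Proposition \ref{stability}, but it is not obviously true, so I would adopt the corrected reading.
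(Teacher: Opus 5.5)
Your proof is correct and follows the paper's argument. For (1) you apply the definitions of $\Pi_{\alpha+1}$ and $\Sigma_{\alpha+1}$ directly to the two-member family, which is what the paper's appeal to Proposition \ref{ordered} and Corollary \ref{finite product} amounts to; (2) and (3) are obtained from Proposition \ref{pure rank 0} and from (1) exactly as in the paper. Your reading of $\rM \cR_{< \alpha}$ as $\bigcup_{\beta \in \alpha} \rM \cR_{\beta}$ is the intended one, consistent with its later use, e.g.\ $\rM \cR_{\alpha_{-}} \subset \rM \cR_{< \alpha}$ in Theorem \ref{mixed decomposition}.
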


\begin{proof}
The assertion (1) follows from Proposition \ref{ordered} and Corollary \ref{finite product}. The assertion (2) follows from Proposition \ref{pure rank 0}. The assertion (3) follows from the assertion (1).
\end{proof}

\begin{prp}
\label{pure convergent Limit stabilty}
For any tuple $(\alpha,I,\cW)$ of an $\alpha \in \Ord \setminus \ens{0}$, an $I \in \Set_{< \kappa}$, and a $\cW \in \rP \cR_{< \alpha}^I$ with $\sup \set{\rank_{\rP \cR}(\cW(i))}{i \in I} = \alpha$, the relation $\rA_I \cW \in \rP \rA_{\alpha}$ holds.
\end{prp}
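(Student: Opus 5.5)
The plan is to reduce to the definition of $\rP \rA_{\alpha}$ by flattening, as in the proofs of Proposition \ref{stability} and Proposition \ref{pure convergent successor stabilty}. The obstacle is that an entry $\cW(i)$ may lie in $\rP \rA$ rather than $\rP \rB$, so I will rewrite each such entry as an $\rA$-product of $\rP \rB$-type pieces. The new family should then satisfy the cofinality condition defining $\rP \cR_{< \alpha}^{I' \to \rB}$.

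First I record that $\alpha$ is a limit ordinal. Indeed $\rank_{\rP \cR}(\cW(i)) \in \alpha$ for every $i$, yet the supremum of these ranks equals $\alpha \neq 0$, so $\alpha$ cannot be a successor. Consequently, for every $\gamma \in \alpha$ the set $\set{i \in I}{\rank_{\rP \cR}(\cW(i)) > \gamma}$ is infinite. Otherwise finitely many ranks, each below $\alpha$, would bound the ranks of all entries outside a set on which ranks are $\leq \gamma$, and the supremum would be $< \alpha$.

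Next I split $I = I_{\rB} \sqcup I_{\rA}$. Let $I_{\rB}$ be the set of $i$ with $\cW(i) \in \rP \rB$; this includes the rank-$0$ entries, since $\Isom(\ens{k}) \subset \rP \rB_0$ by Proposition \ref{pure rank 0}. Let $I_{\rA}$ be the rest, so $\cW(i) \in \rP \rA_{\gamma_i} \setminus \rP \rB$ with $\gamma_i \coloneqq \rank_{\rP \rA}(\cW(i)) \neq 0$. For $i \in I_{\rA}$, I choose a pair $(I_i,\cW_i)$ with $I_i \in \Set_{< \kappa}$, $\cW_i \in \rP \cR_{< \gamma_i}^{I_i \to \rB}$ and $\cW(i) \cong \rA_{I_i} \cW_i$. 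I then set $I' \coloneqq I_{\rB} \sqcup \bigsqcup_{i \in I_{\rA}} \ens{i} \times I_i$, with $\cW'(i) \coloneqq \cW(i)$ for $i \in I_{\rB}$ and $\cW'((i,i')) \coloneqq \cW_i(i')$. Associativity of $\rA$ gives $\rA_I \cW \cong \rA_{I'} \cW'$, exactly as in Proposition \ref{stability}. Moreover $\cW' \in \rP \cR_{< \alpha}^{I'}$, since each $\gamma_i < \alpha$.

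It remains to verify $\cW' \in \rP \cR_{< \alpha}^{I' \to \rB}$, which is the main step. Fix $\beta \in \alpha$. Since $\alpha$ is a limit, $\beta + 1 \in \alpha$, so infinitely many $i$ satisfy $\rank_{\rP \cR}(\cW(i)) > \beta + 1$. For each such $i$ I produce an index of $I'$ lying over $i$ whose entry is in $\rP \rB_{< \alpha} \setminus \rP \rB_{< \beta}$.
\begin{itemize}
\item If $i \in I_{\rB}$, then $\rank_{\rP \rB}(\cW(i)) \geq \rank_{\rP \cR}(\cW(i)) > \beta$, so $\cW'(i) = \cW(i) \notin \rP \rB_{< \beta}$. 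Also $\cW(i) \in \rP \rB_{< \alpha}$.
\item If $i \in I_{\rA}$, then $\gamma_i > \beta + 1 > \beta$. The defining property of $\cW_i \in \rP \cR_{< \gamma_i}^{I_i \to \rB}$ at $\beta \in \gamma_i$ yields some $i' \in I_i$ (in fact infinitely many) with $\cW_i(i') \in \rP \rB_{< \gamma_i} \setminus \rP \rB_{< \beta} \subset \rP \rB_{< \alpha} \setminus \rP \rB_{< \beta}$.
\end{itemize}
Distinct $i$ give distinct indices in $I'$. Hence $\set{x \in I'}{\cW'(x) \in \rP \rB_{< \alpha} \setminus \rP \rB_{< \beta}}$ is infinite, and $\rA_I \cW \cong \rA_{I'} \cW' \in \rP \rA_{\alpha}$. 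The delicate point is bookkeeping between $\rank_{\rP \cR}$, $\rank_{\rP \rA}$ and $\rank_{\rP \rB}$. Placing entries lying in both $\rP \rA$ and $\rP \rB$ into $I_{\rB}$ avoids any ambiguity, and the inequality $\rank_{\rP \rB} \geq \rank_{\rP \cR}$ is immediate from the definitions.
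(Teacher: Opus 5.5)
Your overall strategy is the paper's: flatten the $\rA$-type entries, keep the $\rB$-type ones, and find a witness above each of the infinitely many high-rank indices. Your uniform count over both $I_{\rB}$ and $I_{\rA}$ is slightly cleaner than the paper's version. The paper first disposes of the case $\cW \in \rP \cR_{< \alpha}^{I \to \rB}$ and then uses an auxiliary $\alpha'$ to force the high-rank indices into $I_{\rA}$.

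However, there is a gap in how you define $I_{\rB}$. You take $I_{\rB} \coloneqq \set{i \in I}{\cW(i) \in \rP \rB}$ with no level bound. You then assert in the first bullet that $\cW(i) \in \rP \rB_{< \alpha}$ for $i \in I_{\rB}$, and this does not follow. Membership $\cW(i) \in \rP \cR_{< \alpha}$ only says that $\rank_{\rP \rA}(\cW(i)) < \alpha$ or $\rank_{\rP \rB}(\cW(i)) < \alpha$. So an entry could lie in $\rP \rA_{\gamma}$ for some $\gamma \in \alpha$ while belonging to $\rP \rB$ only at levels $\geq \alpha$. Nothing available at this point excludes that. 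The disjointness results (Corollary \ref{pure disjointness 1}) come later and need the density and non-measurability hypotheses, which this proposition does not assume. Invoking them would also be circular, since this proposition feeds into Proposition \ref{pure non-convergent stabilty} and from there into the proof of Proposition \ref{pure reductive summand}.

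Your choice of putting entries lying in both $\rP \rA$ and $\rP \rB$ into $I_{\rB}$ is exactly what breaks. Such an entry is neither flattened nor counted. If every entry were of this kind, nothing would be flattened, and $\cW' = \cW$ would have no entry in $\rP \rB_{< \alpha}$ at all. The condition at $\beta = 0$ would then already fail.

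The fix is to use the paper's definition $I_{\rB} \coloneqq \set{i \in I}{\cW(i) \in \rP \rB_{< \alpha}}$. For $i \in I_{\rA}$ you then get $\cW(i) \in \rP \rA_{< \alpha} \setminus \rP \rB_{< \alpha}$. Hence $\gamma_i = \rank_{\rP \rA}(\cW(i)) = \rank_{\rP \cR}(\cW(i)) \in \alpha$. Also $\gamma_i \neq 0$, because $\Isom(\ens{k}) \subset \rP \rB_0 \subset \rP \rB_{< \alpha}$. For $i \in I_{\rB}$, the membership $\cW(i) \in \rP \rB_{< \alpha}$ now holds by definition. With this change both of your bullets go through verbatim, and the rest of your argument is correct.
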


\begin{proof}
By $\cW \in \rP \cR_{< \alpha}^I$ and $\sup \set{\rank_{\rP \cR}(\cW(i))}{i \in I} = \alpha$, $\alpha$ is a limit ordinal. Set $V \coloneqq \rA_I \cW$. If $\cW \in \rP \cR_{< \alpha}^{I \to \rB}$, then we have $V \in \rP \rA_{\alpha}$ by definition. Suppose $\cW \notin \rP \cR_{< \alpha}^{I \to \rB}$, i.e.\ there exists an $\alpha' \in \alpha$ such that $\# \set{i \in I}{\cW(i) \in \rP \rB_{< \alpha} \setminus \rP \rB_{< \alpha'}} < \omega$.

\vs
Set $I_{\rB} \coloneqq \set{i \in I}{\cW(i) \in \rP \rB_{< \alpha}}$ and $I_{\rA} \coloneqq I \setminus I_{\rB}$. For each $i \in I_{\rA}$, we have $\cW(i) \in \rP \cR_{< \alpha} \setminus \rP \rB_{< \alpha} \subset \rP \rA_{< \alpha}$, and take a pair $(I_i,\cW_i)$ of an $I_i \in \Set_{< \kappa}$ and a $\cW_i \in \rP \cR_{< \rank_{\rP \rA}(\cW(i))}^{I_i \to \rB}$ with $\cW(i) \cong \rA_{I_i} \cW_i$. Set $I' \coloneqq (\bigsqcup_{i \in I_{\rA}} \ens{i} \times I_i) \sqcup (\bigsqcup_{i \in I_{\rB}} \ens{(i,i)})$ and define $\cW' \in \rP \cR_{< \alpha}^{I'}$ by
\be
\cW'((i,i')) \coloneqq
\left\{
\begin{array}{ll}
\cW_i(i') & (i \in I_{\rA}) \\
\cW(i) & (i \in I_{\rB})
\end{array}
\right..
\ee
We have $V = \rA_I \cW \cong \rA_{I'} \cW'$, and hence it suffices to show $\cW' \in \rP \cR_{< \alpha}^{I \to \rB}$. Let $\beta \in \alpha$. Set $I'_0 \coloneqq \set{(i,i') \in I'}{\cW'((i,i')) \in \rP \rB_{< \alpha} \setminus \rP \rB_{< \beta}}$. It suffices to show $\# I'_0 \geq \omega$

\vs
Set $\beta' \coloneqq \max \ens{\alpha',\beta} + 1$. Since $\alpha$ is a non-zero limit ordinal, we have $\beta' \in \alpha$. Set $I_0 \coloneqq \set{i \in I}{\cW(i) \notin \rP \cR_{< \beta'}}$. By $\sup \set{\rank_{\rP \cR}(\cW(i))}{i \in I} = \alpha$, we have $\# I_0 \geq \omega$. By the choice of $\alpha'$, we have $\#(I_0 \cap I_{\rB}) < \omega$, and hence $\#(I_0 \cap I_{\rA}) \geq \omega$.

\vs
Let $i \in I_0 \cap I_{\rA}$. By $i \notin I_{\rB}$ and $\cW(i) \in \rP \cR_{< \alpha}$, we have $\rank_{\rP \cR}(\cW(i)) = \rank_{\rP \rA}(\cW(i)) < \alpha$. By $i \in I_0$, we have $\rank_{\rP \cR}(\cW(i)) \geq \beta' > \beta$, and hence
\be
\# \set{i' \in I_i}{\cW_i(i') \in \rP \rB_{< \rank_{\rP \rA}(\cW(i))} \setminus \rP \rB_{< \beta}} \geq \omega > 0.
\ee
Take an $h_i \in I_i$ with $\cW_i(h_i) \in \rP \rB_{< \rank_{\rP \rA}(\cW(i))} \setminus \rP \rB_{< \beta}$. Then we have $(i,h_i) \in I'_0$. We conclude $\# I'_0 \geq \#(I \cap I_{\rA}) \geq \omega$.
\end{proof}

\begin{prp}
\label{pure non-convergent stabilty}
For any tuple $(\alpha,J,\cV)$ of an $\alpha \in \Suc$, a $J \in \Set_{< \kappa} \setminus \ens{\emptyset}$, and a $\cV \in \rP \rA_{< \alpha}^J$, the relation $\rA_J \cV \in \rP \rA_{< \alpha}$ holds.
\end{prp}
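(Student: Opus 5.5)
Set $\gamma \coloneqq \sup \set{\rank_{\rP \rA}(\cV(j))}{j \in J}$. Since each $\cV(j) \in \rP \rA_{< \alpha}$ and $\alpha \in \Suc$, every rank is at most $\alpha_{-}$, so $\gamma \leq \alpha_{-}$. It therefore suffices to show $\rA_J \cV \in \rP \rA_{\gamma}$, because $\rP \rA_{\gamma} \subset \rP \rA_{< \alpha}$. I split the argument according to whether $J$ is finite or infinite, and then according to whether the supremum $\gamma$ is attained.

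\emph{Finite $J$.} For $\# J < \omega$ I use induction on $\# J$, starting from $J \neq \emptyset$ and using $\rA_{J} \cV \cong \rA_{J \setminus \ens{j}} \cV \times \cV(j)$. Each step applies Proposition \ref{pure finite product}, which puts $\rA_J \cV$ in $\rP \rA_{\max_j \rank_{\rP \rA}(\cV(j))} = \rP \rA_{\gamma}$. This case covers the finite configurations directly, including those in which every $\cV(j)$ has rank $0$.

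\emph{Infinite $J$.} I follow the flattening used in Proposition \ref{stability} and Proposition \ref{pure convergent successor stabilty}. For each $j$ with $\rank_{\rP \rA}(\cV(j)) > 0$, choose $(I_j,\cW_j)$ with $\cW_j \in \rP \cR_{< \rank_{\rP \rA}(\cV(j))}^{I_j \to \rB}$ and $\cV(j) \cong \rA_{I_j} \cW_j$. For each $j$ of rank $0$, keep $\cV(j) \cong k$ as a single entry, exactly as the $I_{\rB}$-entries are kept in the proof of Proposition \ref{pure convergent Limit stabilty}. Let $I$ be the disjoint union of these index sets and $\cW$ the resulting family, so that $\rA_J \cV \cong \rA_I \cW$ and $\cW \in \rP \cR_{< \gamma}^I$ whenever $\gamma > 0$. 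The remaining task is to show $\cW \in \rP \cR_{< \gamma}^{I \to \rB}$. Fix $\beta \in \gamma$; I must produce infinitely many $i \in I$ with $\cW(i) \in \rP \rB_{< \gamma} \setminus \rP \rB_{< \beta}$.
\begin{itemize}
\item If some $j$ has $\rank_{\rP \rA}(\cV(j)) = \gamma$, the defining condition of $\cW_j \in \rP \cR_{< \gamma}^{I_j \to \rB}$ already supplies infinitely many such $i$ inside $\ens{j} \times I_j$.
\item Otherwise $\gamma$ is a limit ordinal. Pick $j$ with $\beta < \rank_{\rP \rA}(\cV(j)) < \gamma$; then the condition for $\cW_j$ yields infinitely many entries in $\rP \rB_{< \rank_{\rP \rA}(\cV(j))} \setminus \rP \rB_{< \beta} \subset \rP \rB_{< \gamma} \setminus \rP \rB_{< \beta}$, as in Proposition \ref{pure convergent Limit stabilty}.
\end{itemize}

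The main obstacle is the degenerate infinite case $\gamma = 0$, in which every $\cV(j) \cong k$. There $\rA_J \cV \cong \rA_J (k)_{j \in J}$, and the argument above only places it in $\rP \rA_1$, since the condition for $\beta = 0$ holds with the infinitely many entries $k \in \rP \rB_0$. This lies inside $\rP \rA_{< \alpha}$ precisely when $\alpha_{-} \geq 1$. So I first settle every case with $\gamma > 0$ by the flattening argument. I then treat $\gamma = 0$ separately: for finite $J$ by Proposition \ref{pure finite product}, and for infinite $J$ by checking that $1 \leq \alpha_{-}$ holds in the situation at hand. I expect the bookkeeping of this rank-$0$ boundary to be the delicate step, and I would check it against the conventions of Proposition \ref{pure finite product} and Proposition \ref{pure rank 0}. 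In particular, when $\alpha = 1$ and $J$ is infinite, $\rP \rA_{< \alpha} = \rP \rA_0 = \Isom(\ens{k})$, whereas $\rA_J (k)_{j \in J}$ is infinite-dimensional. So if $\alpha_{-} \geq 1$ cannot be secured there, the statement as worded fails in that case, and this step should be settled before relying on it.
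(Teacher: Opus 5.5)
\textbf{Genuine gap: the rank-$0$ boundary on the finite side.} Your finite case claims that iterating Proposition \ref{pure finite product} covers the configurations in which every $\cV(j)$ has rank $0$. It does not. That proposition is false when both factors have rank $0$: $k \times k = k^2$ is two-dimensional, while $\rP \rA_0 = \Isom(\ens{k})$. Its proof needs presentations $V_h \cong \rA_{I_h} \cW_h$, and these do not exist in rank $0$.

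Worse, the statement itself fails there. For $2 \leq n < \omega$, $k^n$ lies in $F_k$, and by Proposition \ref{pure rank 0} (2) $F_k$ meets no $\rP \rA_{\beta}$ with $\beta \neq 0$. So for $J = \ens{0,1}$ and $\cV = (k,k)$ one gets $\rA_J \cV \notin \rP \rA_{< \alpha}$ for every $\alpha \in \Suc$ and every $\kappa$.

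Your observation for $\alpha = 1$ and infinite $J$ is also an honest counterexample. Nothing in the hypotheses forces $\alpha_{-} \geq 1$, so you should state it outright rather than leave it conditional.

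The paper's proof breaks at the same two points:
\begin{itemize}
\item For finite $J$ with all $\cV(j) \cong k$, it gets $\alpha_0 = 0$ and $J_0 = J$, then invokes Proposition \ref{pure finite product} in rank $0$.
\item For $\alpha = 1$ and infinite $J$ with all $\cV(j) \cong k$, it gets $\alpha_0 = 1$ and $J_1 = J$. It then invokes Proposition \ref{pure convergent successor stabilty} with $\alpha_0 = 1$, which would place $\rA_J (k)_{j \in J}$ in $\rP \rA_0$.
\end{itemize}

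\textbf{What survives: your flattening argument, which is the better route.} It is correct whenever $\gamma \geq 1$ and needs no case split on $\# J$. For finite $J$ the supremum is attained, so your first bullet applies; this also replaces the faulty iteration of Proposition \ref{pure finite product}. It gives the sharper conclusion $\rA_J \cV \in \rP \rA_{\gamma}$ in one step. The paper instead splits off a finite top part $J_0$ and routes the tail $J_1$ through Propositions \ref{pure convergent successor stabilty}, \ref{pure convergent Limit stabilty} and \ref{pure finite product}.

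So the provable statement is the given one with an extra hypothesis: some $\cV(j)$ has positive rank (or $\# J = 1$), for instance no $\cV(j)$ lies in $F_k$. This hypothesis holds along the paper's route through Theorem \ref{mixed decomposition}, whose proof discards the $F_k$-entries before invoking Proposition \ref{successor non-pure}. In the remaining infinite case with all $\cV(j) \cong k$, the conclusion holds exactly when $\alpha \geq 2$.

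\textbf{A separate, inherited issue.} Your flattening, like Proposition \ref{stability} on which the paper's argument also rests, tacitly needs $\bigsqcup_{j} \ens{j} \times I_j \in \Set_{< \kappa}$. This is automatic for regular $\kappa$ or $\kappa = \Ord$, but not for singular $\kappa$.
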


\begin{proof}
For a $\beta \in \Ord$, set $J_{\geq \beta} \coloneqq \set{j \in J}{\cV(j) \notin \rP \rA_{< \beta}}$. We have $\# J_{\geq \alpha} = 0 < \omega$. Let $\alpha_0$ denote the minimum of a $\beta \in \Ord$ with $\# J_{\geq \beta} <\omega$. Set $J_0 \coloneqq J_{\geq \alpha_0}$ and $J_1 \coloneqq J \setminus J_0$. By the definition of $\alpha_0$, we have $\# J_0 < \omega$. Set $V_0 \coloneqq \rA_{J_0} \cV$ and $V_1 \coloneqq \rA_{J_1} \cV$.

\vs
If $J_0 = \emptyset$, then we have $V_0 = \ens{0}$. If $J_0 \neq \emptyset$, we have $V_0 \in \rP \rA_{< \alpha}$ by Proposition \ref{pure finite product}.

\vs
If $\alpha_0 = 0$, then we have $J_1 = \emptyset$, and hence $V_1 = \ens{0}$. If $\alpha_0 \neq 0$, then we have $\cW \upharpoonright J_1 \in \rP \cR_{< \alpha_0}^{J_1 \to \rA}$ by the minimality of $\alpha_0$, and hence $V_1 \in \rP \rA_{< \alpha}$ by Proposition \ref{pure convergent successor stabilty} for the case $\alpha_0 \in \Suc$ and Proposition \ref{pure convergent Limit stabilty} for the case $\alpha_0 \notin \Suc \cup \ens{0}$.

\vs
By $J \neq \emptyset$, we have either $J_0 \neq \emptyset$ or $J_1 \neq \emptyset$. Therefore, $V \cong V_0 \times V_1$ is a product of two Banach $k$-vector spaces both of which are zero or belong to $\rP \rA_{< \alpha}$ but one of which is not zero, and hence belongs to $\rP \rA_{< \alpha}$ by Proposition \ref{pure finite product}.
\end{proof}

\begin{prp}
\label{successor non-pure}
For any tuple $(\alpha,V,I,\cW)$ of an $\alpha \in \Ord \setminus \ens{0}$, a $V \in \cR_{\alpha} \setminus (\cR_{< \alpha} \cup \rP \rA_{\alpha})$, an $I \in \Set_{< \kappa}$, and a $\cW \in \rP \cR_{< \alpha}^I$ with $V \cong \rA_I \cW$, the relations $\alpha \in \Suc$ and $V \in \rM \cR_{\alpha_{-}}$ hold.
\end{prp}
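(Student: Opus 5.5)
We split the index set by which pure class each component lies in, mimicking the proof of Proposition \ref{pure non-convergent stabilty}. Set $I_{\rA} \coloneqq \set{i \in I}{\cW(i) \in \rP \rA_{< \alpha}}$ and $I_{\rB} \coloneqq I \setminus I_{\rA}$, so that $\cW(i) \in \rP \rB_{< \alpha}$ for $i \in I_{\rB}$. Then $V \cong \rA_{I_{\rA}} \cW \times \rA_{I_{\rB}} \cW$. The $\rA$-part is harmless. If $I_{\rA} \neq \emptyset$, flattening each $\cW(i)$ for $i \in I_{\rA}$ as in the proof of Proposition \ref{stability} lets us rewrite $\rA_{I_{\rA}} \cW$ as a single $\rA$-construction over $\rP \rB$-components. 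The real work is on $I_{\rB}$.

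\textbf{Step 1: the $\rB$-components have bounded rank.} Suppose that for every $\beta \in \alpha$ the set of $i \in I_{\rB}$ with $\cW(i) \notin \rP \rB_{< \beta}$ is infinite. Then $\cW \upharpoonright I_{\rB} \in \rP \cR_{< \alpha}^{I_{\rB} \to \rB}$. We would then merge the flattened $\rA$-part with it, as in the proofs of Proposition \ref{mixed finite product} and Proposition \ref{pure convergent Limit stabilty}, and conclude $V \in \rP \rA_{\alpha}$, a contradiction. Hence there is some $\alpha' \in \alpha$ such that only finitely many $i \in I_{\rB}$ have $\cW(i) \notin \rP \rB_{< \alpha'}$. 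The same argument applied to the flattened $\rA$-part gives the analogous bound there.

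\textbf{Step 2: $\alpha$ is a successor.} If $\alpha$ were a limit ordinal, all components would lie in $\rP \cR_{< \gamma}$ for a single $\gamma \in \alpha$, apart from finitely many components of rank $< \alpha$. We would then apply Proposition \ref{stability} (using $\rP \cR_{< \gamma} \subset \cR_{< \gamma}$) together with Corollary \ref{finite product}. This places $V$ in $\cR_{\beta}$ for some $\beta < \alpha$, contradicting $V \notin \cR_{< \alpha}$. So $\alpha \in \Suc$.

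\textbf{Step 3: $V \in \rM \cR_{\alpha_{-}}$.} Write $I_{\rB} = I_0 \sqcup I_1$, where $I_0$ is the finite set of exceptional indices from Step 1 and $I_1$ the rest. By Proposition \ref{pure non-convergent stabilty}, $\rA_{I_{\rA}} \cW$ (when $I_{\rA} \neq \emptyset$) lies in $\rP \rA_{< \alpha} = \rP \rA_{\le \alpha_{-}}$. By Proposition \ref{pure finite product}, the finite product $\rA_{I_0} \cW$ lies in $\rP \rB_{\le \alpha_{-}}$. For $I_1$, its components have rank $< \alpha'$; if $\alpha' \le \alpha_{-}$, Proposition \ref{stability} puts $\rA_{I_1} \cW$ in $\cR_{\alpha_{-}}$, and it should be absorbed into a pure part via Proposition \ref{mixed finite product}.

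We also need both pure factors to have rank exactly $\alpha_{-}$. If either factor had lower rank, or were absent, Proposition \ref{mixed finite product} would absorb it into the other, and then $V$ would be pure of rank $\le \alpha_{-}$. That would put $V \in \cR_{< \alpha}$, a contradiction. The factors are raised to rank exactly $\alpha_{-}$ using Proposition \ref{ordered}-style padding, or via Proposition \ref{pure finite product} when the ranks are equal. This yields $V \cong V_{\Pi} \times V_{\Sigma}$ with $V_{\Pi} \in \rP \Pi_{\alpha_{-}}$ and $V_{\Sigma} \in \rP \Sigma_{\alpha_{-}}$.

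\textbf{Main obstacle.} The delicate point is the rank bookkeeping in Step 3. We must ensure that both $\rA$- and $\rB$-pure factors of rank exactly $\alpha_{-}$ genuinely occur, and that pieces of strictly smaller rank can be absorbed by Proposition \ref{mixed finite product}. Doing so uses the contradiction hypotheses $V \notin \cR_{< \alpha}$ and $V \notin \rP \rA_{\alpha}$ in the same way each time.
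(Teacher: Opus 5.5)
Your Steps 1 and 2 are essentially sound. In Step 1 no flattening is needed: if for every $\beta \in \alpha$ infinitely many $\rB$-components lie outside $\rP \rB_{< \beta}$, then $\cW$ itself already lies in $\rP \cR_{< \alpha}^{I \to \rB}$. In Step 2, the bound on the $\rA$-side holds because a single $\rA$-component of pure rank $\gamma > \beta$ flattens into infinitely many $\rB$-components outside $\rP \rB_{< \beta}$.

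The genuine gap is in Step 3, in how you treat $I_1$. You take $\alpha'$ to be \emph{some} ordinal below which all but finitely many $\rB$-components lie. From this you only get $\rA_{I_1} \cW \in \cR_{\alpha'}$ via Proposition \ref{stability}. That is not enough, because Propositions \ref{pure finite product} and \ref{mixed finite product} only combine \emph{pure} factors, and nothing you have shown makes $\rA_{I_1} \cW$ pure. With a non-minimal $\alpha'$, $I_1$ may still contain finitely many components of comparatively large rank. For instance, with $\rA = \Pi$, take infinitely many copies of $\rC_0(\omega,k)$ together with one copy of $\rC_0(\omega,\ell^{\infty}(\omega,k))$. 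Then $\Pi_{I_1} \cW \cong \ell^{\infty}(\omega,\rC_0(\omega,k)) \times \rC_0(\omega,\ell^{\infty}(\omega,k))$, a product of a $\rP \Pi_2$ and a $\rP \Sigma_2$ space, i.e.\ a space in $\rM \cR_2$ of exactly the kind this proposition is about. Moreover, $\rA_{I_1} \cW$ is an $\rA$-construction. Even when it is pure it lies in $\rP \rA$ and must be merged with $\rA_{I_{\rA}} \cW$ by Proposition \ref{pure finite product}, not ``absorbed'' via Proposition \ref{mixed finite product}.

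The missing idea is the minimal choice the paper makes (its $\beta_0$). Let $\alpha'$ be the \emph{least} $\beta$ such that only finitely many $i \in I_{\rB}$ have $\cW(i) \notin \rP \rB_{< \beta}$.
\begin{itemize}
\item By minimality, for each $\beta \in \alpha'$ infinitely many such $i$ have $\cW(i) \notin \rP \rB_{< \beta}$. Removing the finite set $I_0$ keeps this infinite.
\item Hence $\cW \upharpoonright I_1 \in \rP \cR_{< \alpha'}^{I_1 \to \rB}$ and $\rA_{I_1} \cW \in \rP \rA_{\alpha'}$, or $I_1 = \emptyset$ when $\alpha' = 0$.
\item Now $V_{\rA} \coloneqq \rA_{I_{\rA}} \cW \times \rA_{I_1} \cW \in \rP \rA_{< \alpha}$ and $V_{\rB} \coloneqq \rA_{I_0} \cW \in \rP \rB_{< \alpha}$ by Proposition \ref{pure finite product}.
\end{itemize}

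Your final bookkeeping should then be the paper's simple dichotomy. Drop the ``Proposition \ref{ordered}-style padding'': Proposition \ref{ordered} says nothing about pure ranks, and no raising is needed.
\begin{itemize}
\item If the pure ranks of $V_{\rA}$ and $V_{\rB}$ differ, Proposition \ref{mixed finite product} makes $V$ pure of rank $< \alpha$, a contradiction.
\item If both equal $r$, then $V \in \rM \cR_r \subset \cR_{r+1}$ by Proposition \ref{degeneracy of mixed} (1). Then $V \notin \cR_{< \alpha}$ forces $\alpha = r + 1$.
\end{itemize}
This also yields $\alpha \in \Suc$ directly, so your separate limit-ordinal Step 2 is a valid but dispensable detour.
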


\begin{proof}
Set $I_{\rA} \coloneqq \set{i \in I}{\cW(i) \in \rP \rA}$, $I_{\rB} \coloneqq I \setminus I_{\rA}$, $\beta_1 \coloneqq \sup \set{\rank_{\rP \rB}(\cW(i))}{i \in I_{\rB}}$, $V_0 \coloneqq \rA_{I_{\rA}} \cW$, and $V_1 \coloneqq \rA_{I_{\rB}} \cW$. By Proposition \ref{pure non-convergent stabilty}, we have $V_0 \in \rP \rA_{< \alpha}$. If $V_1 \in \rP \rA_{< \alpha + 1}$, then we have $V \cong V_0 \times V_1 \in \rP \rA_{< \alpha + 1}$ by Proposition \ref{pure finite product}, which contradicts $V \notin \cR_{< \alpha} \cup \rP \rA_{\alpha}$. Therefore, we obtain $V_1 \notin \rP \rA_{< \alpha + 1}$.

\vs
Set $I_{\geq \beta} \coloneqq \set{i \in I_{\rB}}{\cW(i) \notin \rP \rB_{< \beta}}$ for a $\beta \in \alpha$, and $I_{\rB,1} \coloneqq I_{\geq \beta_1}$. If $\# I_{\rB,1} = 0$, then we have $\cW \upharpoonright I_{\rB} \in \rP \cR_{< \beta_1}^{I \to \rB}$, and hence $V_1 \in \rP \rA_{\beta_1} \subset \rP \rA_{< \alpha + 1}$, which leads to contradiction. We obtain $\# I_{\rB,1} > 0$, and hence $\beta_1 < \alpha$. If $\# I_{\rB,1} \geq \omega$, then we have $\cW \upharpoonright I_{\rB} \in \rP \cR_{< \beta_1 + 1}^{I \to \rB}$, and hence $V_1 \in \rP \rA_{\beta_1 + 1} \subset \rP \rA_{< \alpha + 1}$, which leads to contradiction.

\vs
We obtain $0 < \# I_{\rB,1} < \omega$. Let $\beta_0$ denote the minimum of a $\beta \in \alpha$ such that $\# I_{\geq \beta} < \omega$. Set $I_{\rB,0} \coloneqq I_{\geq \beta_0}$, $V'_1 \coloneqq \rA_{I_{\rB} \setminus I_{\rB,0}} \cW$, $V_{\rA} \coloneqq V_0 \times V'_1$ and $V_{\rB} \coloneqq \rA_{I_{\rB,0}} \cW$. By $\emptyset \neq I_{\rB,1} \subset I_{\rB,0}$ and Proposition \ref{pure finite product}, we have $V_{\rB} \in \rP \rB_{< \alpha}$.

\vs
We show $V_{\rA} \in \rP \rA_{< \alpha}$. If $\beta_0 = 0$, then we have $I_{\rB,0} = I_{\rB}$ and hence $V_{\rA} \cong V_0 \in \rP \rA_{< \alpha}$. Suppose $\beta_0 > 0$. By the minimality of $\beta_0$, we have $\cW \upharpoonright (I_{\rB} \setminus I_{\rB,0}) \in \rP \cR_{< \beta_0}^{I_{\rB} \setminus I_{\rB,0} \to \rB}$. Therefore, we obtain $V'_1 \in \rP \rA_{\beta_0}$, and hence $V_{\rA} \in \rP \rA_{< \alpha}$ by Proposition \ref{pure finite product}.

\vs
Set $r_{\rA} \coloneqq \rank_{\rP \rA}(V_{\rA})$ and $r_{\rB} \coloneqq \rank_{\rP \rB}(V_{\rB})$. If $r_{\rA} \neq r_{\rB}$, then we have $V \cong V_{\rA} \times V_{\rB} \in \rP \cR_{\max \ens{r_0,r_1}}$ by Proposition \ref{mixed finite product}, which contradicts $V \notin \cR_{< \alpha}$. Therefore, we obtain $r_{\rA} = r_{\rB}$, and hence $V \in \rM \cR_{r_{\rA}}$. This implies $\alpha = r_{\rA} + 1$ by $V \notin \cR_{< \alpha}$ and Proposition \ref{degeneracy of mixed}.
\end{proof}

We show a non-Archimedean analogue of \cite{Eda83-2} Lemma 3:

\begin{thm}
\label{mixed decomposition}
For any $\alpha \in \Ord \setminus \ens{0}$, the equalities
\be
\Pi_{\alpha} & = & F_k \cup \cR_{< \alpha} \cup \rM \cR_{< \alpha} \cup \rP \Pi_{\alpha} \\
\Sigma_{\alpha} & = & F_k \cup \cR_{< \alpha} \cup \rM \cR_{< \alpha} \cup \rP \Sigma_{\alpha} \\
\cR_{\alpha} & = & F_k \cup \cR_{< \alpha} \cup \rM \cR_{< \alpha} \cup \rP \cR_{\alpha}
\ee
hold.
\end{thm}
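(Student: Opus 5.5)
We prove the equality for $\rA_{\alpha}$ (with $\rA$ either $\Pi$ or $\Sigma$); the equality for $\cR_{\alpha}$ follows by taking the union of the two, using $\rP \cR_{\alpha} = \rP \Pi_{\alpha} \cup \rP \Sigma_{\alpha}$ and $\cR_{\alpha} = \Pi_{\alpha} \cup \Sigma_{\alpha}$.

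\emph{The inclusion $\supset$.} By Proposition \ref{ordered}, $F_k \cup \cR_{< \alpha} \subset \rA_{\alpha}$. By the displayed relations after the definition of pure Reid hierarchy, $\rP \rA_{\alpha} \subset \rA_{\alpha}$. For the mixed part, let $V \in \rM \cR_{\beta}$ with $\beta \in \alpha$. Proposition \ref{degeneracy of mixed} (1) gives $V \in \Delta_{\beta + 1} \subset \cR_{\beta + 1}$. If $\beta + 1 \in \alpha$, this lies in $\cR_{< \alpha} \subset \rA_{\alpha}$. Otherwise $\beta + 1 = \alpha$, and then $V \in \Delta_{\alpha} \subset \rA_{\alpha}$.

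\emph{The inclusion $\subset$.} Let $V \in \rA_{\alpha} \setminus (F_k \cup \cR_{< \alpha} \cup \rP \rA_{\alpha})$; we must show $V \in \rM \cR_{< \alpha}$. Since $V \notin F_k$, there is a pair $(I,\cW)$ with $I \in \Set_{< \kappa}$, $\cW \in \cR_{< \alpha}^I$ and $V \cong \rA_I \cW$. The key step is to replace $\cW$ by a family in $\rP \cR_{< \alpha}^{I'}$ for some $I' \in \Set_{< \kappa}$. Once this is done, Proposition \ref{successor non-pure} applies to $V \in \cR_{\alpha} \setminus (\cR_{< \alpha} \cup \rP \rA_{\alpha})$. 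It yields $\alpha \in \Suc$ and $V \in \rM \cR_{\alpha_{-}} \subset \rM \cR_{< \alpha}$, which finishes the proof.

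\emph{The reduction lemma (main obstacle).} We prove by transfinite induction on $\gamma$ the following statement: every $W \in \cR_{\gamma} \setminus \ens{0}$ satisfies one of
\begin{itemize}
\item[(a)] $W \in F_k$, so $W \cong k^n$ with each factor in $\rP \cR_0$;
\item[(b)] $W \in \rP \cR_{\gamma'}$ for some $\gamma' \leq \gamma$;
\item[(c)] $W \cong W_0 \times W_1$ with $W_0 \in \rP \Pi_{\gamma'}$ and $W_1 \in \rP \Sigma_{\gamma'}$ for some $\gamma' < \gamma$.
\end{itemize}
In every case $W$ is isomorphic to a finite product of members of $\rP \cR_{\leq \gamma}$. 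For the original family, drop the zero members of $\cW$ and split each $\cW(i)$ into such finitely many factors. Since $\rA$ commutes with this regrouping, as in the proof of Proposition \ref{stability}, we get $V \cong \rA_{I'} \cW'$ with $\cW' \in \rP \cR_{< \alpha}^{I'}$ and $\# I' < \kappa$ (for infinite $\kappa$, a finite blow-up of $I$ does not change membership in $\Set_{< \kappa}$).

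To prove the lemma for $W \cong \rA_J \cV$ with $\cV \in \cR_{< \gamma}^J$, first apply the induction hypothesis to each $\cV(j)$ and regroup, so we may assume $\cV \in \rP \cR_{< \gamma}^{J}$. If $W \in \cR_{< \gamma}$, apply the induction hypothesis to $W$ directly. If $W \in \rP \rA_{\gamma}$, we are in (b). Otherwise Proposition \ref{successor non-pure} applies with $\gamma$ in place of $\alpha$ and gives (c). The delicate points are the zero space, which is excluded at the outset, and the circularity: Proposition \ref{successor non-pure} needs pure index families, so the lemma and the main claim must be proved simultaneously by the same induction. We expect organising this induction cleanly to be the main difficulty.
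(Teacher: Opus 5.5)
Your proof is correct and takes essentially the same route as the paper: transfinite induction on $\alpha$, rewriting each component $\cW(i)$ as a finite product of members of $\rP \cR_{< \alpha}$ via the inductive hypothesis, regrouping, and then applying Proposition \ref{successor non-pure}. Your ``reduction lemma'' is just the theorem's own induction hypothesis, so the simultaneous induction you worried about poses no real difficulty. The one cosmetic difference is that the paper removes the components lying in $F_k$ (merging infinitely many into one factor and absorbing finitely many via Proposition \ref{absorbing law}), whereas you split them into copies of $k \in \rP \cR_0 \subset \rP \cR_{< \alpha}$; this is equally valid, since Proposition \ref{successor non-pure} is stated for arbitrary $\cW \in \rP \cR_{< \alpha}^I$.
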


\begin{proof}
Since the third equality follows from the first two equalities, it suffices to show the first two equalities by transfinite induction on $\alpha$. By Proposition \ref{ordered}, Corollary \ref{finite product}, and Proposition \ref{degeneracy of mixed} (1), it suffices to show that $\rA_{\alpha} \setminus (F_k \cup \cR_{< \alpha} \cup \rP \rA_{\alpha}) \subset \rM \cR_{< \alpha}$.

\vs
Let $V \in \rA_{\alpha} \setminus (F_k \cup \cR_{< \alpha} \cup \rP \rA_{\alpha})$. By $V \in \rA_{\alpha} \setminus F_k$, there exists a pair $(I,\cW)$ of an $I \in \Set_{< \kappa}$ and a $\cW \in \cR_{< \alpha}^I$ with $V \cong \rA_I \cW$. Replacing $I$ by $\set{i \in I}{\cW(i) \neq \ens{0}}$, we may assume $\cW(i) \neq \ens{0}$ for any $i \in I$. Set $I_0 \coloneqq \set{i \in I}{\cW(i) \in F_k}$ and $I_1 \coloneqq I \setminus I_0$.

\vs
If $\# I_0 \geq \omega$, we replace $I$ by $\ens{\ast} \sqcup I_1$ and $\cW$ by the map $\ens{\ast} \sqcup I_1 \to \cR_{< \alpha} \setminus F_k$ defined by
\be
i \mapsto 
\left\{
\begin{array}{ll}
\rA_{I_0} \cW & (i = \ast) \\
\cW(i) & (i \in I_1)
\end{array}
\right.
\ee
so that the resulting $I_0$ becomes $\emptyset$. Therefore, we may assume $\# I_0 < \omega$ and hence $\rA_{I_0} \cW \in F_k$. We have $I_1 \neq \emptyset$ by $V \notin F_k$, and hence $\rA_{I_1} \cW \notin F_k$ by $\cW(i) \notin F_k$ for any $i \in I_1$. We obtain
\be
V \cong \rA_I \cW \cong (\rA_{I_0} \cW) \times (\rA_{I_1} \cW) \cong \rA_{I_1} \cW
\ee
by Proposition \ref{absorbing law}. We replace $I$ by $I_1$ and $\cW$ by $\cW \upharpoonright I_1$ so that the resulting $I_0$ becomes $\emptyset$. For any $i \in I$, $\cW(i)$ belongs to either $\rP \cR_{\rank_{\rP \cR}(\cW(i))}$ or $\rM \cR_{< \rank_{\rP \cR}(\cW(i))}$ by $\cW(i) \notin F_k$ and induction hypothesis, and hence $\cW(i)$ is the product of one or two elements of $\rP \cR_{< \alpha}$.

\vs
Therefore, we may replace $I$ and $\cW$ so that the relation $\cW \in \rP \cR_{< \alpha}^I$ holds. By Proposition \ref{successor non-pure}, we obtain $\alpha \in \Suc$ and $V \in \rM \cR_{\alpha_{-}} \subset \rM \cR_{< \alpha}$.
\end{proof}

\section{Reductive Summand}
\label{Reductive Summand}

We study classification of Banach $k$-vector spaces in $\cR$ in terms of pure Reid hierarchy. For this purpose, we introduce a notion which is a little wider than direct summand.

\begin{dfn}
Let $(V,W) \in \Ban(k)^2$. We say that $W$ is a {\it reductive summand of $V$} if there exists a $(\sigma,\pi) \in \Hom_{\leq 1}(W,V) \times \Hom_{\leq 1}(V,W)$ such that $\n{\id_W - \pi \circ \sigma} < 1$. We call such an $(\sigma,\pi)$ a {\it factorisation pair for $(V,W)$}.
\end{dfn}

\begin{prp}
\label{composition}
Let $(V_0,V_1,V_2) \in \Ban(k)^3$. If $V_0$ is a reductive summand of $V_1$ and $V_1$ is a reductive summand of $V_2$, then $V_0$ is a reductive summand of $V_2$.
\end{prp}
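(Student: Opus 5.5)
Take a factorisation pair $(\sigma_0,\pi_0)$ for $(V_1,V_0)$ and a factorisation pair $(\sigma_1,\pi_1)$ for $(V_2,V_1)$. We propose the composite pair $(\sigma,\pi) \coloneqq (\sigma_1 \circ \sigma_0, \pi_0 \circ \pi_1)$ as a factorisation pair for $(V_2,V_0)$. Since operator norms are submultiplicative, $\n{\sigma} \leq \n{\sigma_1}\n{\sigma_0} \leq 1$ and $\n{\pi} \leq 1$, so $(\sigma,\pi) \in \Hom_{\leq 1}(V_0,V_2) \times \Hom_{\leq 1}(V_2,V_0)$. It therefore remains only to estimate $\n{\id_{V_0} - \pi \circ \sigma}$.

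For this we use the decomposition
\be
\id_{V_0} - \pi_0 \circ \pi_1 \circ \sigma_1 \circ \sigma_0 = (\id_{V_0} - \pi_0 \circ \sigma_0) + \pi_0 \circ (\id_{V_1} - \pi_1 \circ \sigma_1) \circ \sigma_0 .
\ee
The norm on $\Hom(V_0,V_0)$ is the operator norm with respect to non-Archimedean norms, so it satisfies the ultrametric inequality. Hence the norm of the left side is at most
\be
\max \ens{\n{\id_{V_0} - \pi_0 \circ \sigma_0}, \n{\pi_0}\n{\id_{V_1} - \pi_1 \circ \sigma_1}\n{\sigma_0}} \leq \max \ens{\n{\id_{V_0} - \pi_0 \circ \sigma_0}, \n{\id_{V_1} - \pi_1 \circ \sigma_1}}.
\ee
Both quantities on the right are $< 1$ by assumption, so the maximum is $< 1$. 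This is exactly the defining condition, and $V_0$ is a reductive summand of $V_2$.

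No step is a genuine obstacle. The only point needing care is to use the ultrametric (max) inequality rather than the triangle inequality: with the triangle inequality the sum of two numbers less than $1$ need not be less than $1$. We also note the degenerate cases. If some $V_i = \ens{0}$, all operator norms involved are $0$ and the argument still goes through, using the convention $\sup \emptyset = 0$.
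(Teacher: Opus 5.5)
Your proof is correct and essentially identical to the paper's. The paper uses the same composite pair (in its notation $(\sigma' \circ \sigma, \pi \circ \pi')$) and the same decomposition of $\id_{V_0} - \pi_0 \circ \pi_1 \circ \sigma_1 \circ \sigma_0$. It then concludes with the same ultrametric max estimate, using submultiplicativity of operator norms and $\n{\pi_0}, \n{\sigma_0} \leq 1$.
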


\begin{proof}
Take factorisation pairs $(\sigma,\pi)$ and $(\sigma',\pi')$ for $(V_1,V_0)$ and $(V_2,V_1)$ respectively. We have
\be
\pi \circ \pi' \circ \sigma' \circ \sigma = \pi \circ (\id_{V_1} - (\id_{V_1} - \pi' \circ \sigma')) \circ \sigma = \pi \circ \sigma - \pi \circ (\id_{V_1} - \pi' \circ \sigma') \circ \sigma
\ee
and hence
\be
\n{\id_{V_0} - \pi \circ \pi' \circ \sigma' \circ \sigma} & = & \n{\id_{V_0} - \pi \circ \sigma + \pi \circ (\id_{V_1} - \pi' \circ \sigma') \circ \sigma} \\
& \leq & \max \ens{\n{\id_{V_0} - \pi \circ \sigma}, \n{\pi \circ (\id_{V_1} - \pi' \circ \sigma') \circ \sigma}} \\
& \leq & \max \ens{\n{\id_{V_0} - \pi \circ \sigma}, \n{\pi} \ \n{\id_{V_1} - \pi' \circ \sigma'} \ \n{\sigma}} < 1.
\ee
Therefore, $(\sigma' \circ \sigma,\pi \circ \pi')$ is a factorisation pair for $(V_2,V_0)$.
\end{proof}

We say that $\kappa$ {\it satisfies the non-measurability condition} if no $\alpha \in \kappa$ is $\omega_1$-measurable, or equivalently, if no $\alpha \in \kappa$ is measurable. The following is the key application of non-Archimedean Dugas--Zimmermann-Huisgen's extension of Chase's lemma (cf.\ \cite{Mih26} Corollary 3.9):

\begin{prp}
\label{factorisation pair reduction}
Suppose that $\v{k}$ is dense in $\R_{\geq 0}$ and $\kappa$ satisfies the non-measurability condition. Then for any tuple $(I,J,\cW,\cV)$ of an $I \in \Set_{< \kappa}$, a $J \in \Set_{< \kappa}$, a $\cW \in \Ban(k)^I$, a $\cV \in \Ban(k)^J$, the following hold:
\bi
\item[(1)] For any factorisation pair $(\sigma,\pi)$ for $(\Pi_I \cW, \Sigma_J \cV)$, there exists an $(I'_0,J'_0) \in \cP_{< \omega}(I) \times \cP_{< \omega}(J)$ such that for any $(I_0,J_0) \in \cP_{< \omega}(I) \times \cP_{< \omega}(J)$ with $I'_0 \subset I_0$ and $J'_0 \subset J_0$, $(\sigma_{J \setminus J_0}^{I_0},\pi_{I_0}^{J \setminus J_0})$ is a factorisation pair for $(\Pi_{I_0} \cW, \Sigma_{J \setminus J_0} \cV)$.
\item[(2)] For any factorisation pair $(\sigma,\pi)$ for $(\Sigma_J \cV,\Pi_I \cW)$, there exists an $(I'_0,J'_0) \in \cP_{< \omega}(I) \times \cP_{< \omega}(J)$ such that for any $(I_0,J_0) \in \cP_{< \omega}(I) \times \cP_{< \omega}(J)$ with $I'_0 \subset I_0$ and $J'_0 \subset J_0$, $(\sigma_{I \setminus I_0}^{J_0},\pi_{J_0}^{I \setminus I_0})$ is a factorisation pair for $(\Sigma_{J_0} \cV, \Pi_{I \setminus I_0} \cW)$.
\ei
\end{prp}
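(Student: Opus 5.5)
The plan is to reduce both assertions to the non-Archimedean Dugas--Zimmermann-Huisgen extension of Chase's lemma, \cite{Mih26} Corollary 3.9. I will use it in the following form. For $I,J \in \Set_{< \kappa}$ with $\kappa$ satisfying the non-measurability condition, $\v{k}$ dense in $\R_{\geq 0}$, any $f \in \Hom(\Pi_I \cW,\Sigma_J \cV)$ and any $\epsilon \in \R_{> 0}$, there is an $(I'_0,J'_0) \in \cP_{< \omega}(I) \times \cP_{< \omega}(J)$ with $\n{f_{I \setminus I'_0}^{J \setminus J'_0}} < \epsilon$. Density of $\v{k}$ is what turns the ``divisible part'' of the classical statement into a genuinely small operator norm. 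I also need a monotonicity remark, which is immediate from the definitions of $\sigma_{I'}$ and $\pi^{J'}$ as isometric zero extension and norm-non-increasing projection. If $I'_0 \subset I_0$ and $J'_0 \subset J_0$, then $f_{I \setminus I_0}^{J \setminus J_0}$ is a restriction followed by a projection of $f_{I \setminus I'_0}^{J \setminus J'_0}$. Hence its norm is again $< \epsilon$, and this is what gives the ``for any larger $(I_0,J_0)$'' clause.

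For (1), let $(\sigma,\pi)$ be a factorisation pair for $(\Pi_I \cW,\Sigma_J \cV)$, so $\sigma \colon \Sigma_J \cV \to \Pi_I \cW$ and $\pi \colon \Pi_I \cW \to \Sigma_J \cV$. Put $c \coloneqq \n{\id - \pi \circ \sigma} < 1$. I apply the corollary to $f = \pi$ with $\epsilon \coloneqq 1$ (any $\epsilon \leq 1$ works), obtaining $(I'_0,J'_0)$. Now fix $I_0 \supset I'_0$ and $J_0 \supset J'_0$ finite, and write $\id_{\Pi_I \cW} = \sigma_{I_0}\pi^{I_0} + \sigma_{I \setminus I_0}\pi^{I \setminus I_0}$. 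Sandwiching between $\pi^{J \setminus J_0}$ and $\sigma_{J \setminus J_0}$ gives
\be
\id - \pi_{I_0}^{J \setminus J_0} \circ \sigma_{J \setminus J_0}^{I_0} = \pi^{J \setminus J_0} \circ (\id - \pi \circ \sigma) \circ \sigma_{J \setminus J_0} + \pi_{I \setminus I_0}^{J \setminus J_0} \circ \sigma_{J \setminus J_0}^{I \setminus I_0}.
\ee
By the ultrametric inequality, the right hand side has norm at most $\max \ens{c,\n{\pi_{I \setminus I_0}^{J \setminus J_0}}\,\n{\sigma}}$. This is $< 1$ because $\n{\sigma} \leq 1$ and $\n{\pi_{I \setminus I_0}^{J \setminus J_0}} < 1$. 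The two maps $\sigma_{J \setminus J_0}^{I_0}$ and $\pi_{I_0}^{J \setminus J_0}$ have norm $\leq 1$, being compositions of norm $\leq 1$ maps. Hence they form a factorisation pair for $(\Pi_{I_0} \cW,\Sigma_{J \setminus J_0} \cV)$.

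For (2) the argument is symmetric, but the corollary is applied to the other map. Now $\sigma \colon \Pi_I \cW \to \Sigma_J \cV$ goes from a product to a sum. Take $(I'_0,J'_0)$ with $\n{\sigma_{I \setminus I'_0}^{J \setminus J'_0}} < 1$. Then insert $\id_{\Sigma_J \cV} = \sigma_{J_0}\pi^{J_0} + \sigma_{J \setminus J_0}\pi^{J \setminus J_0}$ between $\pi$ and $\sigma$, and sandwich by $\pi^{I \setminus I_0}$ and $\sigma_{I \setminus I_0}$. This yields
\be
\id - \pi_{J_0}^{I \setminus I_0} \circ \sigma_{I \setminus I_0}^{J_0} = \pi^{I \setminus I_0} \circ (\id - \pi \circ \sigma) \circ \sigma_{I \setminus I_0} + \pi_{J \setminus J_0}^{I \setminus I_0} \circ \sigma_{I \setminus I_0}^{J \setminus J_0},
\ee
with norm $< 1$ by the same estimate.

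The only real obstacle is matching the exact statement of \cite{Mih26} Corollary 3.9 to the form used above. The hypotheses should correspond precisely to ours: density of $\v{k}$, non-measurability of $\# I$, and a bounded map from $\Pi_I$ to $\Sigma_J$. If the corollary only gives $\n{f_{I \setminus I_0}^{J \setminus J_0}} \leq \epsilon \n{f}$, or gives a bound on a cofinite restriction, it still suffices. All we need is some $\epsilon$ with $\epsilon \n{\sigma} < 1$, or $\epsilon \n{\pi} < 1$, which is possible since these norms are $\leq 1$. Once that is in place, the rest is the bookkeeping displayed above.
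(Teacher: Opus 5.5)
Your proposal is correct and follows the paper's proof. You apply \cite{Mih26} Corollary 3.9 to $\pi$ in (1) and to $\sigma$ in (2), extend to larger finite sets by monotonicity, and conclude with the same identity and ultrametric estimate. The only difference is that the paper uses the corollary in the form $\n{\pi_{I \setminus I'_0}^{J \setminus J'_0}} < \n{\pi}$, and so treats $\pi = 0$ (resp.\ $\sigma = 0$) as a separate trivial case; your fallback remark already covers this weaker form.
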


Although (2) is the same as (1) except for the replacement of $\Pi$ and $\Sigma$ by $\Sigma$ and $\Pi$ respectively, we avoid to use $\rA$ and $\rB$ because proofs are not parallel for the two statements.

\begin{proof}
Set $V \coloneqq \Pi_I \cW$ and $W \coloneqq \Sigma_J \cV$.

\vs
(1) First, suppose $\pi = 0$, then we have $\n{\id_W} = \n{\id_W - \pi \circ \sigma} < 1$ and hence $W = \ens{0}$. Therefore, $(I'_0,J'_0) = (\emptyset,\emptyset)$ satisfies the desired condition.

\vs
Next, suppose $\pi \neq 0$. Since $\v{k}$ is dense in $\R_{\geq 0}$ and $\kappa$ satisfies the non-measurability condition, there exists an $(I'_0,J'_0) \in \cP_{< \omega}(I) \times \cP_{< \omega}(J)$ such that $\n{\pi_{I \setminus I'_0}^{J \setminus J'_0}} < \n{\pi}$ by \cite{Mih26} Corollary 3.9 applied to $\pi$. Let $(I_0,J_0) \in \cP_{< \omega}(I) \times \cP_{< \omega}(J)$ with $I'_0 \subset I_0$ and $J'_0 \subset J_0$. Then we have $\n{\pi_{I \setminus I_0}^{J \setminus J_0}} \leq \n{\pi_{I \setminus I'_0}^{J \setminus J'_0}} < \n{\pi} \leq 1$. Set $V' \coloneqq \Pi_{I_0} \cW$ and $W' \coloneqq \Sigma_{J \setminus J_0} \cV$. We have
\be
\pi_{I_0}^{J \setminus J_0} \circ \sigma_{J \setminus J_0}^{I_0} & = & \pi^{J \setminus J_0} \circ \pi \circ \sigma_{I_0} \circ \pi^{I_0} \circ \sigma \circ \sigma_{J \setminus J_0} \\
& = & \pi^{J \setminus J_0} \circ \pi \circ ( \id_V - \sigma_{I \setminus I_0} \circ \pi^{I \setminus I_0}) \circ \sigma \circ \sigma_{J \setminus J_0} \\
& = & \pi^{J \setminus J_0} \circ \pi \circ \sigma \circ \sigma_{J \setminus J_0} - \pi^{J \setminus J_0} \circ \pi \circ \sigma_{I \setminus I_0} \circ \pi^{I \setminus I_0} \circ \sigma \circ \sigma_{J \setminus J_0} \\
& = & \pi^{J \setminus J_0} \circ (\id_W - (\id_W - \pi \circ \sigma)) \circ \sigma_{J \setminus J_0} - \pi_{I \setminus I_0}^{J \setminus J_0} \circ \sigma_{J \setminus J_0}^{I \setminus I_0} \\
& = & \pi^{J \setminus J_0} \circ \sigma_{J \setminus J_0} - \pi^{J \setminus J_0} \circ (\id_W - \pi \circ \sigma) \circ \sigma_{J \setminus J_0} - \pi_{I \setminus I_0}^{J \setminus J_0} \circ \sigma_{J \setminus J_0}^{I \setminus I_0} \\
& = & \id_{W'} - (\id_W - \pi \circ \sigma)_{J \setminus J_0}^{J \setminus J_0} - \pi_{I \setminus I_0}^{J \setminus J_0} \circ \sigma_{J \setminus J_0}^{I \setminus I_0},
\ee
and hence
\be
\n{\id_{W'} - \pi_{I_0}^{J \setminus J_0} \circ \sigma_{J \setminus J_0}^{I_0}} & = & \n{(\id_W - \pi \circ \sigma)_{J \setminus J_0}^{J \setminus J_0} + \pi_{I \setminus I_0}^{J \setminus J_0} \circ \sigma_{J \setminus J_0}^{I \setminus I_0}} \\
& \leq & \max \ens{\n{(\id_W - \pi \circ \sigma)_{J \setminus J_0}^{J \setminus J_0}}, \n{\pi_{I \setminus I_0}^{J \setminus J_0} \circ \sigma_{J \setminus J_0}^{I \setminus I_0}}} \\
& \leq & \max \ens{\n{\id_W - \pi \circ \sigma}, \n{\pi_{I \setminus I_0}^{J \setminus J_0}} \ \n{\sigma}} < 1.
\ee
Therefore, $(\sigma_{J \setminus J_0}^{I_0},\pi_{I_0}^{J \setminus J_0})$ is a factorisation pair for $(V',W')$.

\vs
(2) First, suppose $\sigma = 0$, then we have $\n{\id_V} = \n{\id_V - \pi \circ \sigma} < 1$ and hence $V = \ens{0}$. Therefore, $(I'_0,J'_0) = (\emptyset,\emptyset)$ satisfies the desired condition.

\vs
Next, suppose $\sigma \neq 0$. Since $\v{k}$ is dense in $\R_{\geq 0}$ and $\kappa$ satisfies the non-measurability condition, there exists an $(I'_0,J'_0) \in \cP_{< \omega}(I) \times \cP_{< \omega}(J)$ such that $\n{\sigma_{I \setminus I'_0}^{J \setminus J'_0}} < \n{\sigma}$ by \cite{Mih26} Corollary 3.9 applied to $\sigma$. Let $(I_0,J_0) \in \cP_{< \omega}(I) \times \cP_{< \omega}(J)$ with $I'_0 \subset I_0$ and $J'_0 \subset J_0$. We have $\n{\sigma_{I \setminus I_0}^{J \setminus J_0}} \leq \n{\sigma_{I \setminus I'_0}^{J \setminus J'_0}} < \n{\sigma} \leq 1$. Set $V' \coloneqq \Pi_{I \setminus I_0} \cW$ and $W' \coloneqq \Sigma_{J_0} \cV$. We have
\be
\pi_{J_0}^{I \setminus I_0} \circ \sigma_{I \setminus I_0}^{J_0} & = & \pi^{I \setminus I_0} \circ \pi \circ \sigma_{J_0} \circ \pi^{J_0} \circ \sigma \circ \sigma_{I \setminus I_0} \\
& = & \pi^{I \setminus I_0} \circ \pi \circ (\id_W - \sigma_{J \setminus J_0} \circ \pi^{J \setminus J_0}) \circ \sigma \circ \sigma_{I \setminus I_0} \\
& = & \pi^{I \setminus I_0} \circ \pi \circ \sigma \circ \sigma_{I \setminus I_0} - \pi^{I \setminus I_0} \circ \pi \circ \sigma_{J \setminus J_0} \circ \pi^{J \setminus J_0} \circ \sigma \circ \sigma_{I \setminus I_0} \\
& = & \pi^{I \setminus I_0} \circ (\id_V - (\id_V - \pi \circ \sigma)) \circ \sigma_{I \setminus I_0} - \pi_{J \setminus J_0}^{I \setminus I_0} \circ \sigma_{I \setminus I_0}^{J \setminus J_0} \\
& = & \pi^{I \setminus I_0} \circ \sigma_{I \setminus I_0} - \pi^{I \setminus I_0} \circ (\id_V - \pi \circ \sigma) \circ \sigma_{I \setminus I_0} - \pi_{J \setminus J_0}^{I \setminus I_0} \circ \sigma_{I \setminus I_0}^{J \setminus J_0} \\
& = & \id_{V'} - (\id_V - \pi \circ \sigma)_{I \setminus I_0}^{I \setminus I_0} - \pi_{J \setminus J_0}^{I \setminus I_0} \circ \sigma_{I \setminus I_0}^{J \setminus J_0},
\ee
and hence
\be
\n{\id_{V'} - \pi_{J_0}^{I \setminus I_0} \circ \sigma_{I \setminus I_0}^{J_0}} & = & \n{(\id_V - \pi \circ \sigma)_{I \setminus I_0}^{I \setminus I_0} + \pi_{J \setminus J_0}^{I \setminus I_0} \circ \sigma_{I \setminus I_0}^{J \setminus J_0}} \\
& \leq & \max \ens{\n{(\id_V - \pi \circ \sigma)_{I \setminus I_0}^{I \setminus I_0}}, \n{\pi_{J \setminus J_0}^{I \setminus I_0} \circ \sigma_{I \setminus I_0}^{J \setminus J_0}}} \\
& \leq & \max \ens{\n{\id_V - \pi \circ \sigma}, \n{\pi} \ \n{\sigma}_{I \setminus I_0}^{J \setminus J_0}} < 1.
\ee
Therefore, $(\sigma_{I \setminus I_0}^{J_0},\pi_{J_0}^{I \setminus I_0})$ is a factorisation pair for $(W',V')$.
\end{proof}

We give a non-Archimedean analogue of \cite{Eda83-2} Lemma 5:

\begin{prp}
\label{pure reductive summand}
Suppose that $\v{k}$ is dense in $\R_{\geq 0}$ and $\kappa$ satisfies the non-measurability condition. Then for any tuple $(\alpha,\beta,V,W)$ of an $\alpha \in \Ord \setminus \ens{0}$, a $\beta \in \Ord \setminus \ens{0}$, a $V \in \rP \Pi_{\alpha}$, and a $W \in \rP \Sigma_{\beta}$, the following hold:
\bi
\item[(1)] If $W$ is a reductive summand of $V$, then the inequality $\beta < \alpha$ holds.
\item[(2)] If $V$ is a reductive summand of $W$, then the inequality $\alpha < \beta$ holds.
\ei
\end{prp}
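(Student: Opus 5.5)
We prove (1) and (2) simultaneously by transfinite induction on $\max \ens{\alpha,\beta}$, or more precisely on the pair $(\alpha,\beta)$ ordered by $\alpha + \beta$ in the natural (Hessenberg) sense. The proof follows Eda's proof of \cite{Eda83-2} Lemma 5. Each step uses Proposition \ref{factorisation pair reduction} to pass a factorisation pair to a finite piece on one side and a cofinite piece on the other. It then uses Proposition \ref{composition} to cut the finite piece further down to a single component of strictly smaller rank.

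For (1), write $V \cong \Pi_I \cW$ with $\cW \in \rP \cR_{< \alpha}^{I \to \Sigma}$ and $W \cong \Sigma_J \cV$ with $\cV \in \rP \cR_{< \beta}^{J \to \Pi}$. Transport the factorisation pair along these isometries and apply Proposition \ref{factorisation pair reduction} (1). This gives $I_0 \in \cP_{< \omega}(I)$ and a cofinite $J \setminus J_0$ such that $W' \coloneqq \Sigma_{J \setminus J_0} \cV$ is a reductive summand of $\Pi_{I_0} \cW$.

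Suppose for contradiction that $\beta \geq \alpha$. By the defining condition of $\rP \cR_{< \beta}^{J \to \Pi}$, removing finitely many indices keeps infinitely many $j$ with $\cV(j) \in \rP \Pi_{< \beta} \setminus \rP \Pi_{< \gamma}$ for every $\gamma \in \beta$. We want a single $j \in J \setminus J_0$ and a single $i \in I_0$ such that $\cV(j) \in \rP \Pi_{\beta'}$ with $\beta' \geq \rank(\cW(i))$ roughly, and $\cV(j)$ is a reductive summand of $\cW(i)$, giving a contradiction with the induction hypothesis.

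To get this, compose the inclusion of $\cV(j)$ into $W'$ with the factorisation pair. We land in the finite product $\Pi_{I_0} \cW$, whose components are $\Pi$-type or $\Sigma$-type pure spaces of rank $< \alpha$. The $\Pi$-type components of $I_0$ can be merged via Proposition \ref{pure convergent successor stabilty} and Proposition \ref{pure finite product} into a single $\rP \Pi$ space. The $\Sigma$-type components similarly merge into a single $\rP \Sigma$ space of rank $< \alpha$.

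This yields a reductive summand $\cV(j) \in \rP \Pi_{\beta'}$ of $X_\Pi \times X_\Sigma$ with both factors of rank $< \alpha$. The piece $\cV(j)$ is itself $\Pi$ of components and is reductive into the $\Sigma$ factor only through a map killed on a cofinite part by Proposition \ref{factorisation pair reduction} (2). Applying that proposition, and using the non-Archimedean inequality $\n{\id - \pi\sigma} < 1$ to discard the summand whose contribution has norm $< 1$, we reduce to either
\begin{itemize}
\item[(a)] a $\rP \Sigma$ component of $\cV(j)$ being a reductive summand of a $\Pi$-type space of smaller total rank, or
\item[(b)] a $\rP \Pi$ space of rank $\geq$ something being a reductive summand of a $\rP \Sigma$ space,
\end{itemize}
and then apply the induction hypothesis (2). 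Choosing $j$ with $\rank_{\rP \Pi}(\cV(j))$ large enough, namely at least $\max$ of the ranks over the finite $I_0$, which is $< \alpha \leq \beta$, makes the ranks incompatible with the induction hypothesis. The hardest step is this splitting of a reductive summand into a finite product $X_\Pi \times X_\Sigma$ into one factor. The plan is to project $\id = \pi\sigma + (\id - \pi\sigma)$ onto each factor and use Proposition \ref{factorisation pair reduction} to show that the cross-type factor contributes norm $< 1$ after discarding finitely many coordinates.

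Part (2) is dual. Apply Proposition \ref{factorisation pair reduction} (2) to obtain a finite $J_0$ with $\Pi_{I \setminus I_0} \cW$ a reductive summand of $\Sigma_{J_0} \cV$. Pick $i \in I \setminus I_0$ with $\cW(i) \in \rP \Sigma$ of rank exceeding all ranks on $J_0$; this is possible by the $I \to \Sigma$ condition if $\alpha \geq \beta$. Run the same splitting argument and invoke the induction hypothesis (1). The base cases, where some component has rank $0$ (that is, it is $k$), are handled by Proposition \ref{pure rank 0}: there are no nonzero reductive maps of the forbidden shape, since $k$ is neither $\rP \Pi_\gamma$ nor $\rP \Sigma_\gamma$ for $\gamma > 0$.
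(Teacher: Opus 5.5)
There is a genuine gap, and it sits at the step you call the hardest. After Proposition \ref{factorisation pair reduction}~(1) you keep the original decomposition $\cW$. So $\Pi_{I_0}\cW$ is in general a genuinely mixed product $X_{\Pi}\times X_{\Sigma}$, and you try to contradict the induction hypothesis with a single component $\cV(j)$ whose rank is at least the ranks on $I_0$.

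This cannot work when $\alpha=\beta\in\Suc$. All components of $\cV$ then have rank at most $\alpha_{-}$, and $I_0$ may contain $\rP\Pi$-components of rank $\alpha_{-}$. The best you can arrange is $\rank_{\rP\Pi}(\cV(j))=\rank_{\rP\Pi}(X_{\Pi})$, and then $\cV(j)$ can honestly be a reductive summand of $X_{\Pi}$. Concretely, take $\alpha=\beta=2$ and let $\cW$ and $\cV$ both alternate between $\ell^{\infty}(\omega,k)$ and $\rC_0(\omega,k)$. Your reduction may leave you with the statement that $\cV(j)=\ell^{\infty}(\omega,k)$ is a reductive summand of $\ell^{\infty}(\omega,k)\times\rC_0(\omega,k)$. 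That statement is simply true, so no splitting lemma and no rank bookkeeping can extract a contradiction from it.

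The splitting is in any case only sketched. Proposition \ref{factorisation pair reduction} only lowers the norm of the cross term on a cofinite piece, and what remains is again $X_{\Pi}$ times finitely many $\rP\Sigma$-components, not a single factor.

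The missing idea is to renormalise $\cW$ before choosing $I_0$, so that no splitting is ever needed.
\begin{itemize}
\item If $\alpha\in\Suc$, Proposition \ref{pure non-convergent stabilty} gives $\Pi_{I_{\rP\Pi}}\cW\in\rP\Pi_{<\alpha}$. Re-expanding it as $\Pi_{I'}\cW'$ with $\cW'\in\rP\cR_{<\alpha_{-}}^{I'}$ replaces every $\rP\Pi$-component by components of rank $<\alpha_{-}$. Infinitely many $\rP\Sigma$-components of rank exactly $\alpha_{-}$ remain, so every finite set of $\rP\Pi$-indices is dominated in rank by some $\rP\Sigma$-index.
\item If $\alpha$ is a limit, this domination is automatic.
\end{itemize}
Now enlarge $I_0$ by one dominating $\rP\Sigma$-index. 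Propositions \ref{pure finite product} and \ref{mixed finite product} then make $V'=\Pi_{I_0}\cW$ itself a pure space in $\rP\Sigma_{\alpha'}$ with $\alpha'<\alpha\le\beta$. For $\alpha=1$, argue instead by dimension: $\Sigma_{J\setminus J_0}\cV$ is infinite-dimensional but would embed into the finite-dimensional $\Pi_{I_0}\cW$. Your rank-$0$ base case needs exactly this dimension count; as written it is too vague.

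Next, some $j\in J\setminus J_0$ has $\rank_{\rP\Pi}(\cV(j))\ge\alpha'$, and $\cV(j)$ is a reductive summand of $V'$ by Proposition \ref{composition}. The induction hypothesis (2), with induction on $\min\ens{\alpha,\beta}$, is then contradicted by this non-strict inequality alone, because the target is pure of $\Sigma$-type. In the example, the finite piece becomes $k^m\times\rC_0(\omega,k)\cong\rC_0(\omega,k)\in\rP\Sigma_1$, and $\ell^{\infty}(\omega,k)\in\rP\Pi_1$ is indeed not a reductive summand of it. Part (2) is dual.
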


\begin{proof}
We show the logical conjunction of (1) and (2) by transfinite induction on $\min \ens{\alpha,\beta}$. By $V \in \rP \Pi_{\alpha}$, there exists a pair $(I,\cW)$ of an $I \in \Set_{< \kappa}$ and a $\cW \in \rP \cR_{< \alpha}^{I \to \Sigma}$ with $V \cong \Pi_I \cW$. By $W \in \rP \Sigma_{\alpha}$, there exists a pair $(J,\cV)$ of a $J \in \Set_{< \kappa}$ and a $\cV \in \rP \cR_{< \beta}^{J \to \Pi}$ with $W \cong \Sigma_J \cV$. It is reduced to the case $V = \Pi_I \cW$ and $W = \Sigma_J \cV$. Set $I_{\rP \Pi} \coloneqq \set{i \in I}{\cW(i) \notin \rP \Sigma_{< \alpha}}$, $I_{\rP \Sigma} \coloneqq I \setminus I_{\rP \Pi}$, $J_{\rP \Sigma} \coloneqq \set{j \in J}{\cV(j) \notin \rP \Pi_{< \alpha}}$, $J_{\rP \Pi} \coloneqq J \setminus J_{\rP \Sigma}$.

\vs
Suppose $\alpha \in \Suc$. Set $r_0 \coloneqq \rank_{\rP \Pi}(\Pi_{I_{\rP \Pi}} \cW)$. By Proposition \ref{successor non-pure}, we have $\Pi_{I_{\rP \Pi}} \cW \in \rP \Pi_{< \alpha}$, and hence $r_0 \leq \alpha_{-}$. Take a pair $(I',\cW')$ of a $I' \in \Set_{< \kappa}$ and a $\cW' \in \rP \cR_{< r_0}^{I' \to \Sigma}$ with $\Pi_{I_{\rP \Pi}} \cW \cong \Pi_{I'} \cW'$. We replace $I$ by $I' \sqcup I_{\rP \Sigma}$ and $\cW$ by the map $I' \sqcup I_{\rP \Sigma} \to \rP \cR_{< \alpha}$ defined by
\be
i \mapsto 
\left\{
\begin{array}{ll}
\cW'(i) & (i \in I') \\
\cW(i) & (i \in I_{\rP \Sigma})
\end{array}
\right.
\ee
so that the relation $\cW(i) \notin \rP \Pi_{\alpha_{-}}$ holds for any $i \in I$ by $r_0 \leq \alpha_{-}$.

\vs
By the argument above, we may assume that the relation $\alpha \notin \Suc$ holds or there exists an $i \in I_{\rP \Sigma}$ such that for any $i' \in I_{\rP \Pi}$, the inequality $\rank_{\rP \Pi}(\cW(i')) < \rank_{\rP \Sigma}(\cW(i))$ holds. In particular, we may assume that for any $I_0 \in \cP_{< \omega}(I)$, there exists an $i \in I_{\rP \Sigma}$ such that for any $i' \in I_0 \cap I_{\rP \Pi}$, the inequality $\rank_{\rP \Pi}(\cW(i')) < \rank_{\rP \Sigma}(\cW(i))$ holds. Similarly, we may assume that for any $J_0 \in \cP_{< \omega}(J)$, there exists a $j \in J_{\rP \Pi}$ such that for any $j' \in J_0 \cap J_{\rP \Sigma}$, the inequality $\rank_{\rP \Sigma}(\cV(j')) < \rank_{\rP \Pi}(\cV(j))$ holds.

\vs
(1) Take a factorisation pair $(\sigma,\pi)$ for $(V,W)$. By Proposition \ref{factorisation pair reduction} (1), there exists an $(I_0,J_0) \in \cP_{< \omega}(I) \times \cP_{< \omega}(J)$ such that $(\sigma_{J \setminus J'_0}^{I'_0},\pi_{I'_0}^{J \setminus J'_0})$ is a factorisation pair for $(\Pi_{I'_0} \cW, \Sigma_{J \setminus J'_0} \cV)$ for any $(I'_0,J'_0) \in \cP_{< \omega}(I) \times \cP_{< \omega}(J)$ with $I_0 \subset I'_0$ and $J_0 \subset J'_0$. 

\vs
Set $V' \coloneqq \Pi_{I_0} \cW$ and $W' \coloneqq \Sigma_{J \setminus J_0} \cV$. Inserting an element of $I_{\rP \Sigma}$ to $I_0$, we may assume that there exists an $i \in I_0 \cap I_{\rP \Sigma}$ such that for any $i' \in I_0 \cap I_{\rP \Pi}$, the inequality $\rank_{\rP \Pi}(\cW(i')) < \rank_{\rP \Sigma}(\cW(i))$ holds. Then, we have $V' \in \rP \Sigma_{< \alpha}$ by Proposition \ref{pure finite product} and Proposition \ref{mixed finite product}.

\vs
Set $\alpha' \coloneqq \rank_{\rP \Sigma}(V')$. Assume $\beta \geq \alpha$. By $\cV \in \rP \cR_{< \beta}^{J \to \Pi}$, $\# J_0 < \omega$, and $\alpha' < \alpha \leq \beta$, there exists a $j \in J \setminus J_0$ such that $\cV(j) \in \rP \Sigma_{< \beta} \setminus \rP \Sigma_{< \alpha'}$. Set $\beta' \coloneqq \rank_{\rP \Sigma}(\cV(j))$. By $\alpha' \leq \beta' < \beta$ and $\alpha' < \alpha$, we have $\min \ens{\alpha',\beta'} = \alpha' < \min \ens{\alpha,\beta}$. Since $\cV(j)$ is a direct summand of $W'$ and $W'$ is a reductive summand of $V'$, $\cV(j)$ is a reductive summand of $V'$ by Proposition \ref{composition}. This contradicts induction hypothesis by $V' \in \rP \Sigma_{\alpha'}$ and $\cV(j) \in \rP \Pi_{\beta'}$. Therefore, we obtain $\beta < \alpha$.

\vs
(2) Take a factorisation pair $(\sigma,\pi)$ for $(W,V)$. By Proposition \ref{factorisation pair reduction} (2), there exists an $(I_0,J_0) \in \cP_{< \omega}(I) \times \cP_{< \omega}(J)$ such that $(\sigma_{I \setminus I'_0}^{J'_0},\pi_{J'_0}^{I \setminus I'_0})$ is a factorisation pair for $(\Sigma_{J'_0} \cV,\Pi_{I \setminus I'_0} \cW)$ for any $(I'_0,J'_0) \in \cP_{< \omega}(I) \times \cP_{< \omega}(J)$ with $I_0 \subset I'_0$ and $J_0 \subset J'_0$.

\vs
Set $V' \coloneqq \Pi_{I \setminus I_0} \cW$ and $W' \coloneqq \Sigma_{J_0} \cV$. Inserting an element of $J_{\rP \Pi}$ to $J_0$, we may assume that there exists a $j \in J_0 \cap J_{\rP \Pi}$ such that for any $j' \in J_0 \cap J_{\rP \Sigma}$, the inequality $\rank_{\rP \Sigma}(\cV(j')) < \rank_{\rP \Pi}(\cV(j))$ holds. Then, we have $W' \in \rP \Pi_{< \alpha}$ by Proposition \ref{pure finite product} and Proposition \ref{mixed finite product}.

\vs
Set $\beta' \coloneqq \rank_{\rP \Pi}(W')$. Assume $\alpha \geq \beta$. By $\cW \in \rP \cR_{< \alpha}^{I \to \Sigma}$, $\# I_0 < \omega$, and $\beta' < \beta \leq \alpha$, there exists an $i \in I \setminus I_0$ such that $\cW(i) \in \rP \Pi_{< \alpha} \setminus \rP \Pi_{< \beta'}$. Set $\alpha' \coloneqq \rank_{\rP \Pi}(\cW(i))$. By $\beta' \leq \alpha' < \alpha$ and $\beta' < \beta$, we have $\min \ens{\alpha',\beta'} = \beta' < \min \ens{\alpha,\beta}$. Since $\cW(i)$ is a direct summand of $V'$ and $V'$ is a reductive summand of $W'$, $\cW(i)$ is a reductive summand of $W'$ by Proposition \ref{composition}. This contradicts induction hypothesis by $W' \in \rP \Pi_{\beta'}$ and $\cW(i) \in \rP \Sigma_{\alpha'}$. Therefore, we obtain $\alpha < \beta$.
\end{proof}

\begin{crl}
\label{pure disjointness 1}
Suppose that $\v{k}$ is dense in $\R_{\geq 0}$ and $\kappa$ satisfies the non-measurability condition. Then for any $(\alpha,\beta) \in (\Ord \setminus \ens{0})^2$, the equality $\rP \Pi_{\alpha} \cap \rP \Sigma_{\beta} = \emptyset$ holds.
\end{crl}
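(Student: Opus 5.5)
We argue by contradiction, using Proposition \ref{pure reductive summand}. Suppose there is a $V \in \rP \Pi_{\alpha} \cap \rP \Sigma_{\beta}$ for some $(\alpha,\beta) \in (\Ord \setminus \ens{0})^2$. The key observation is that a Banach $k$-vector space is always a reductive summand of itself. Indeed, $(\id_V,\id_V) \in \Hom_{\leq 1}(V,V)^2$ is a factorisation pair for $(V,V)$, because $\n{\id_V - \id_V \circ \id_V} = \n{0} = 0 < 1$.

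We first check that $\rP \Pi_{\alpha}$ and $\rP \Sigma_{\beta}$ are closed under isomorphism, so that we may regard the same $V$ as an element of each. Both are defined as $\Isom$ of their $^{\circ}$-versions, so this holds automatically.

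We now apply Proposition \ref{pure reductive summand} to the tuple $(\alpha,\beta,V,V)$, where the first $V$ is viewed in $\rP \Pi_{\alpha}$ and the second in $\rP \Sigma_{\beta}$. The hypotheses that $\v{k}$ is dense in $\R_{\geq 0}$ and that $\kappa$ satisfies the non-measurability condition are exactly those of the corollary.
\begin{itemize}
\item Since $W \coloneqq V$ is a reductive summand of $V$, assertion (1) gives $\beta < \alpha$.
\item Since $V$ is a reductive summand of $W = V$, assertion (2) gives $\alpha < \beta$.
\end{itemize}
Together these yield $\alpha < \beta < \alpha$, which is impossible. Hence $\rP \Pi_{\alpha} \cap \rP \Sigma_{\beta} = \emptyset$.

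There is no genuine obstacle here, since all the analytic work is contained in Proposition \ref{pure reductive summand}. The only point to confirm is that the proposition really allows $V$ and $W$ to be the same space, or more generally isomorphic spaces. It does: its statement places no constraint linking $V$ and $W$ beyond their membership in $\rP \Pi_{\alpha}$ and $\rP \Sigma_{\beta}$ respectively.
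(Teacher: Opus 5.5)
Your proof is correct and is essentially the paper's argument. The paper likewise notes that $V$ is a reductive summand of itself and derives a contradiction from Proposition \ref{pure reductive summand}; you simply spell out the identity factorisation pair and the fact that parts (1) and (2) together force $\beta < \alpha < \beta$.
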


\begin{proof}
Assume that $\rP \Pi_{\alpha} \cap \rP \Sigma_{\beta}$ has an element $V$. Then $V$ is a reductive summand of $V$ itself, but this contradicts Proposition \ref{pure reductive summand}.
\end{proof}

\begin{crl}
\label{pure disjointness 2}
Suppose that $\v{k}$ is dense in $\R_{\geq 0}$ and $\kappa$ satisfies the non-measurability condition. Then for any $(\alpha,\beta) \in \Ord^2$ with $\alpha \neq \beta$, the equality $\rP \Pi_{\alpha} \cap \rP \Pi_{\beta} = \rP \Sigma_{\alpha} \cap \rP \Sigma_{\beta} = \emptyset$ holds.
\end{crl}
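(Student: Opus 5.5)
We prove the assertion for $\rP \Pi$; the case of $\rP \Sigma$ follows by interchanging the roles of $\Pi$ and $\Sigma$ and of (1) and (2) in Proposition \ref{pure reductive summand}. Suppose without loss of generality $\beta < \alpha$, and assume there is a $V \in \rP \Pi_{\alpha} \cap \rP \Pi_{\beta}$.

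First, suppose $\beta = 0$. Then $V \in \Isom(\ens{k}) \subset F_k$ by Proposition \ref{pure rank 0} (1). On the other hand, $\alpha \neq 0$, so Proposition \ref{pure rank 0} (2) gives $V \notin F_k$, a contradiction. Hence we may assume $0 < \beta < \alpha$.

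For this case, the plan is to produce a $W \in \rP \Sigma_{\gamma}$ that is a reductive summand of $V$, with $\gamma$ squeezed between $\beta$ and $\alpha$. Write $V \cong \Pi_I \cW$ with $\cW \in \rP \cR_{< \alpha}^{I \to \Sigma}$. Applying the defining condition with $\beta \in \alpha$ gives infinitely many $i \in I$ with $\cW(i) \in \rP \Sigma_{< \alpha} \setminus \rP \Sigma_{< \beta}$. Choose one such $i$ and set $W \coloneqq \cW(i)$ and $\gamma \coloneqq \rank_{\rP \Sigma}(W)$. Then $\beta \leq \gamma < \alpha$, and $\gamma \neq 0$ because $\beta \neq 0$.

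Next we check that $W$ is a reductive summand of $V$. The zero extension $\sigma_{\ens{i}}$ and the projection $\pi^{\ens{i}}$ have norm at most $1$, and $\pi^{\ens{i}} \circ \sigma_{\ens{i}} = \id_W$. Composing with the isometric isomorphism $V \cong \Pi_I \cW$, we obtain a factorisation pair for $(V,W)$; this is a genuine direct summand, so the norm estimate is trivial.

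Finally, we view $V$ as an element of $\rP \Pi_{\beta}$ with $\beta \neq 0$. Proposition \ref{pure reductive summand} (1) then yields $\gamma < \beta$, contradicting $\beta \leq \gamma$. The only care needed is the degenerate case $\beta = 0$, which is why Proposition \ref{pure rank 0} is handled separately first; I expect no real obstacle beyond that.
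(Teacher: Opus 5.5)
Your proof is correct and matches the paper's argument. You take a component $\cW(i)$ of pure rank $\gamma$ with $\beta \leq \gamma < \alpha$, which exists by the defining condition of $\rP \cR_{< \alpha}^{I \to \rB}$. It is a direct, hence reductive, summand of $V \in \rP \rA_{\beta}$, which contradicts Proposition \ref{pure reductive summand}, and Proposition \ref{pure rank 0} disposes of the case $\beta = 0$. The only difference is the order: you rule out $\beta = 0$ first, while the paper first derives $\beta = 0$ from Proposition \ref{pure reductive summand} and then contradicts it.
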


\begin{proof}
It is reduced to the case $\beta < \alpha$ by symmetry. Assume that $\rP \rA_{\alpha} \cap \rP \rA_{\beta}$ has an element $V$. By $\alpha > \beta \geq 0$, there exists a pair $(I,\cW)$ of an $I \in \Set_{< \kappa}$ and a $\cW \in \rP \cR_{< \alpha}^{I \to \rB}$ with $V \cong \rA_I \cW$. By $\cW \in \rP \cR_{< \alpha}^{I \to \rB}$, there exists an $i \in I$ with $\cW(i) \in \rP \rB_{< \alpha} \setminus \rP \rB_{< \beta}$. In particular, we have $\rank_{\rP \rB}(\cW(i)) \geq \beta$. Since $\cW(i)$ is a reductive summand of $V$, we have $\beta = 0$ by Proposition \ref{pure reductive summand}. This contradicts $\alpha \neq 0$ and Proposition \ref{pure rank 0}.
\end{proof}

\begin{crl}
\label{mixed reduction}
Suppose that $\v{k}$ is dense in $\R_{\geq 0}$ and $\kappa$ satisfies the non-measurability condition. Then for any $(\alpha,\beta) \in (\Ord \setminus \ens{0})^2$, if there exists a $(V,W) \in \rM \cR_{\alpha} \times \rP \cR_{\beta}$ such that $W$ is a reductive summand of $V$, then the inequality $\beta \leq \alpha$ holds.
\end{crl}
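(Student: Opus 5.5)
Write $V \cong V_{\Pi} \times V_{\Sigma}$ with $(V_{\Pi},V_{\Sigma}) \in \rP \Pi_{\alpha} \times \rP \Sigma_{\alpha}$, and take a factorisation pair $(\sigma,\pi)$ for $(V,W)$. The plan is to push the reductive summand relation into one of the two factors and then invoke Proposition \ref{pure reductive summand}. By symmetry it suffices to treat $W \in \rP \Sigma_{\beta}$; the case $W \in \rP \Pi_{\beta}$ is obtained by exchanging $\Pi$ and $\Sigma$ throughout.

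For $W \in \rP \Sigma_{\beta}$, since $\beta \neq 0$, write $W \cong \Sigma_J \cV$ with $\cV \in \rP \cR_{< \beta}^{J \to \Pi}$. View $V$ as $\Pi_{\ens{0,1}}$ of the pair $(V_{\Pi},V_{\Sigma})$. Applying Proposition \ref{factorisation pair reduction} (1) with $I = \ens{0,1}$ gives finite sets $I_0 \subset \ens{0,1}$ and $J_0 \subset J$. Enlarging $I_0$ to $\ens{0,1}$ is harmless, so for every finite $J_0' \supset J_0$ the space $\Sigma_{J \setminus J_0'} \cV$ is a reductive summand of $V$ itself. This is not yet useful, so instead I split $W$.

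Consider $\Sigma$ of the pair $(V_{\Sigma}, V_{\Pi})$. Then $W$ is a reductive summand of $V_{\Sigma} \times V_{\Pi}$, with $V_{\Pi} = \Pi_I \cW$ a bounded product. I would apply Proposition \ref{factorisation pair reduction} (1) to the composite $\pi \circ \sigma_{\ens{0}}$ on the $V_{\Pi}$ component, using the Chase-type estimate from \cite{Mih26} Corollary 3.9 directly. That estimate gives a finite $J_1 \subset J$ and finite $I_1 \subset I$ with $\n{\pi^{J \setminus J_1} \circ \pi \circ \sigma_{\ens{0}} \circ \sigma_{I \setminus I_1}} < 1$. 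Repeating the computation in the proof of Proposition \ref{factorisation pair reduction}, $W' \coloneqq \Sigma_{J \setminus J_1} \cV$ is then a reductive summand of $\Pi_{I_1} \cW \times V_{\Sigma}$. By Proposition \ref{pure finite product} and Proposition \ref{mixed finite product}, after adjoining one index of $\cW$ of $\rP \Sigma$ type as in the proof of Proposition \ref{pure reductive summand}, this space lies in $\rP \Sigma_{\alpha''}$ for some $\alpha'' \leq \alpha$.

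Now suppose $\beta > \alpha$. Since $\cV \in \rP \cR_{< \beta}^{J \to \Pi}$ and $J_1$ is finite, there is a $j \in J \setminus J_1$ with $\cV(j) \in \rP \Pi_{< \beta} \setminus \rP \Pi_{< \alpha}$, so its rank is at least $\alpha \geq \alpha''$. Now $\cV(j)$ is a direct summand of $W'$, hence by Proposition \ref{composition} it is a reductive summand of a space in $\rP \Sigma_{\alpha''}$. This contradicts Proposition \ref{pure reductive summand} (2), which would force $\rank_{\rP\Pi}(\cV(j)) < \alpha''$. Hence $\beta \leq \alpha$.

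The main obstacle is the step producing $W'$ as a reductive summand of $\Pi_{I_1} \cW \times V_{\Sigma}$. This is a mixed product/sum domain, so Proposition \ref{factorisation pair reduction} does not apply verbatim. I would redo its identity computation, decomposing $\id_V = \sigma_{I_1}\pi^{I_1} + \sigma_{I \setminus I_1}\pi^{I \setminus I_1}$ on the $\Pi$-factor only. A secondary subtlety is the degenerate case $\pi = 0$, where $W = \ens{0}$; this is impossible because $\beta \neq 0$, so the case does not arise.
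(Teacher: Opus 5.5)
Your proof is correct and follows essentially the same route as the paper. You pass to a cofinite sub-sum $\Sigma_{J \setminus J_1} \cV$ that is a reductive summand of $\Pi_{I_1} \cW \times V_{\Sigma} \in \rP \Sigma_{\alpha}$, and then apply Proposition \ref{pure reductive summand} to the components $\cV(j)$ to contradict $\cV \in \rP \cR_{< \beta}^{J \to \Pi}$ when $\beta > \alpha$. The paper phrases that last step as a counting argument rather than a contradiction. It also gets the reduction by citing Proposition \ref{factorisation pair reduction} directly, which applies verbatim once $V_{\Sigma}$ is regarded as one extra index of the bounded product; so neither your recomputation nor the adjoined $\rP \Sigma$-type index is needed.

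One caution: the case $W \in \rP \Pi_{\beta}$ is not a literal $\Pi/\Sigma$ swap of your argument. There the Chase-type estimate must be applied to $\sigma$ followed by the projection onto $V_{\Sigma}$, as in Proposition \ref{factorisation pair reduction} (2), not to $\pi$ restricted to a factor.
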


\begin{proof}
Assume $W \in \rP \rB_{\beta}$. Take a $(V_{\rA},V_{\rB}) \in \rP \rA_{\alpha} \times \rP \rB_{\alpha}$ with $V \cong V_{\Pi} \times V_{\Sigma}$. By $\alpha \neq 0$, there exists a pair $(I,\cW)$ of an $I \in \Set_{< \kappa}$ and a $\cW \in \rP \cR_{< \alpha}^{I \to \rB}$ with $V_{\rA} \cong \rA_I \cW$. By $\beta \neq 0$, there exists a pair $(J,\cV)$ of a $J \in \Set_{< \kappa}$ and a $\cV \in \rP \cR_{< \beta}^{J \to \rA}$ with $W \cong \rB_J \cV$.

\vs
Since $W \cong \rB_J \cV$ is a reductive summand of $V \cong (\rA_I \cW) \times V_{\rB}$, there exists an $(I_0,J_0) \in \cP_{< \omega}(I) \times \cP_{< \omega}(J)$ such that $\rB_{J \setminus J_0} \cV$ is a reductive summand of $(\rA_{I_0} \cW) \times V_{\rB}$ by Proposition \ref{factorisation pair reduction}. Set $V' \coloneqq \rB_{J \setminus J_0} \cV$ and $W' \coloneqq (\rA_{I_0} \cW) \times V_{\rB}$. By Proposition \ref{pure finite product} and Proposition \ref{mixed finite product}, we have $W' \in \rP \Sigma_{\alpha}$.

\vs
We show $\cV(j) \in \rP \rA_{< \alpha}$ for any $j \in J \setminus J_0$. Since $\cV(j)$ is a direct summand of $V'$ and $V'$ is a reductive summand of $W'$, $\cV(j)$ is a reductive summand of $W'$ by Proposition \ref{composition}. Therefore, we have $\cV(j) \in \rP \rA_{< \alpha}$ by Proposition \ref{pure reductive summand}. We obtain
\be
\# \set{j \in J}{\cV(j) \in \rP \rA_{< \beta} \setminus \rP \rA_{< \alpha}} \leq \# \set{j \in J}{\cV(j) \notin \rP \rA_{< \alpha}} \leq \# J_0 < \omega.
\ee
This implies $\beta \leq \alpha$ by $\cV \in \rP \cR_{< \beta}^{J \to \rA}$.
\end{proof}

\begin{crl}
\label{pure mixed disjointness}
Suppose that $\v{k}$ is dense in $\R_{\geq 0}$ and $\kappa$ satisfies the non-measurability condition. Then for any $(\alpha,\beta) \in (\Ord \setminus \ens{0})^2$, the equality $\rM \cR_{\alpha} \cap \rP \cR_{\beta} = \emptyset$ holds.
\end{crl}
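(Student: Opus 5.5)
We argue by contradiction. Suppose $V \in \rM \cR_{\alpha} \cap \rP \cR_{\beta}$ for some $(\alpha,\beta) \in (\Ord \setminus \ens{0})^2$. Then $V$ is a reductive summand of itself, via the factorisation pair $(\id_V,\id_V)$. Corollary \ref{mixed reduction} applied to the pair $(V,V)$ therefore gives $\beta \leq \alpha$. It remains to rule out $\beta \leq \alpha$ by exploiting the two pure factors of $V$.

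Write $V \cong V_{\Pi} \times V_{\Sigma}$ with $(V_{\Pi},V_{\Sigma}) \in \rP \Pi_{\alpha} \times \rP \Sigma_{\alpha}$. We have $V \in \rP \rB_{\beta}$ for some choice of $\rB \in \ens{\Pi,\Sigma}$; let $\rA$ denote the other symbol. The factor $V_{\rA} \in \rP \rA_{\alpha}$ is a direct summand of $V$, hence a reductive summand of $V$, via the canonical inclusion and projection, for which $\pi \circ \sigma = \id$.

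We now apply Proposition \ref{pure reductive summand} with the $\rP\Pi$-space and the $\rP\Sigma$-space taken to be $V_{\rA}$ and $V$ (in the appropriate order), using that both ranks $\alpha$ and $\beta$ are non-zero.
\bi
\item[] If $\rB = \Sigma$, then $V_{\Pi} \in \rP \Pi_{\alpha}$ is a reductive summand of $V \in \rP \Sigma_{\beta}$, and part (2) yields $\alpha < \beta$.
\item[] If $\rB = \Pi$, then $V_{\Sigma} \in \rP \Sigma_{\alpha}$ is a reductive summand of $V \in \rP \Pi_{\beta}$, and part (1) yields $\alpha < \beta$.
\ei
In either case we obtain $\alpha < \beta$, which contradicts $\beta \leq \alpha$. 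Hence $\rM \cR_{\alpha} \cap \rP \cR_{\beta} = \emptyset$.

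There is no real obstacle here. The only point requiring care is matching the roles of $\Pi$ and $\Sigma$ correctly in Proposition \ref{pure reductive summand}, and checking that a direct summand of a product is a reductive summand, which is immediate from the definition since the projection composed with the inclusion is the identity. If one prefers not to rely on Corollary \ref{mixed reduction}, the inequality $\beta \leq \alpha$ can instead be obtained from the other factor. Namely, $V_{\rB} \in \rP \rB_{\alpha}$ and $V \in \rP \rB_{\beta}$, and if $\beta > \alpha$, then the definition of $\rP\rB_{\beta}$ via Proposition \ref{factorisation pair reduction} gives a contradiction in the same way as in the proof of Corollary \ref{mixed reduction}. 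However, citing Corollary \ref{mixed reduction} directly is simpler.
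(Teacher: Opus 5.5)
Your proof is correct and follows the paper's argument. The paper also notes that $V$ is a reductive summand of itself, so Corollary~\ref{mixed reduction} bounds $\beta$ by $\alpha$; it then uses that the pure factor of $V$ of the type opposite to $V$ is a direct summand, so Proposition~\ref{pure reductive summand} forces $\alpha < \beta$. Your version is slightly more careful on two points: the paper writes $\beta < \alpha$ where Corollary~\ref{mixed reduction} only gives $\beta \leq \alpha$ (which suffices), and you match parts (1) and (2) of Proposition~\ref{pure reductive summand} to the two cases explicitly.
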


\begin{proof}
Assume that $\rM \cR_{\alpha} \cap \rP \rA_{\beta}$ has an element $V$. Take a $(V_{\Pi},V_{\Sigma}) \in \rP \Pi_{\alpha} \times \rP \Sigma_{\alpha}$ with $V \cong V_{\Pi} \times V_{\Sigma}$.

\vs
Since $V \in \rP \rA_{\beta}$ is a direct summand of $V \in \rM \cR_{\alpha}$ itself, we have $\beta < \alpha$ by Corollary \ref{mixed reduction}. Since $V_{\rB} \in \rP \Pi_{\alpha}$ is a reductive summand of $V \in \rP \rA_{\beta}$, we have $\alpha < \beta$ by Corollary \ref{pure reductive summand}. This leads to contradiction.
\end{proof}

\begin{crl}
\label{mixed disjointness}
Suppose that $\v{k}$ is dense in $\R_{\geq 0}$ and $\kappa$ satisfies the non-measurability condition. Then for any $(\alpha,\beta) \in \Ord^2$ with $\alpha \neq \beta$, the equality $\rM \cR_{\alpha} \cap \rM \cR_{\beta} = \emptyset$ holds.
\end{crl}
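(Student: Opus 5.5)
Assume $\alpha < \beta$ without loss of generality and suppose $V \in \rM \cR_{\alpha} \cap \rM \cR_{\beta}$. Choose decompositions $V \cong V_{\Pi} \times V_{\Sigma}$ with $(V_{\Pi},V_{\Sigma}) \in \rP \Pi_{\alpha} \times \rP \Sigma_{\alpha}$, and $V \cong V'_{\Pi} \times V'_{\Sigma}$ with $(V'_{\Pi},V'_{\Sigma}) \in \rP \Pi_{\beta} \times \rP \Sigma_{\beta}$. The aim is to use Corollary \ref{mixed reduction} on one of the factors $V'_{\Pi}$ or $V'_{\Sigma}$. Each factor is a direct summand of $V$, hence a reductive summand, since the canonical inclusion and projection form a factorisation pair with $\id - \pi \circ \sigma = 0$.

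If $\alpha = 0$, then $V \in F_k$ by Proposition \ref{degeneracy of mixed} (2). We may take $\beta \neq 0$, since $\alpha < \beta$. By Proposition \ref{pure rank 0} (2), $V'_{\Pi} \notin F_k$, so $V'_{\Pi}$ is infinite-dimensional. It is a direct summand of the finite-dimensional space $V$, which is a contradiction.

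If $\alpha \neq 0$, then $\beta \neq 0$ as well. Take $W \coloneqq V'_{\Pi} \in \rP \cR_{\beta}$. It is a reductive summand of $V \in \rM \cR_{\alpha}$, so Corollary \ref{mixed reduction} gives $\beta \leq \alpha$, contradicting $\alpha < \beta$. This finishes the proof.

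The only real content is the invocation of Corollary \ref{mixed reduction}. The hypotheses of that corollary are exactly ours (dense valuation, the non-measurability condition, and nonzero ordinals). The one delicate point is checking that a direct factor counts as a reductive summand in the sense of the definition, i.e.\ that the zero extension and canonical projection have norm at most $1$. This holds because the norms are supremum norms. The degenerate case $\alpha = 0$ needs separate treatment, since Corollary \ref{mixed reduction} only covers nonzero ordinals; it is handled by the finite-dimensionality argument above.
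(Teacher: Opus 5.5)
Your proof is correct and is essentially the paper's argument with the roles of $\alpha$ and $\beta$ interchanged. Like the paper, you rule out the degenerate index $0$ via Proposition \ref{pure rank 0} (2) and Proposition \ref{degeneracy of mixed} (2), and then apply Corollary \ref{mixed reduction} to the $\rP \Pi$-factor of the decomposition with the larger index, viewed as a direct (hence reductive) summand of $V$ in the mixed class with the smaller index.

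One small precision: do not deduce that $V'_{\Pi}$ is infinite-dimensional from $V'_{\Pi} \notin F_k$ alone, because a finite-dimensional Banach $k$-vector space need not be isometric to any $\ell^{\infty}(\omega_{< n},k)$. Instead cite the proof of Proposition \ref{pure rank 0} (2), which directly shows $\dim_k \geq \omega$.
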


\begin{proof}
It is reduced to the case $\beta < \alpha$ by symmetry. Assume that $\rM \cR_{\alpha} \cap \rM \cR_{\beta}$ has an element $V$. We have $\beta \neq 0$ by Proposition \ref{pure rank 0} (2) and Proposition \ref{degeneracy of mixed}. Take a $(V_{\Pi},V_{\Sigma}) \in \rP \Pi_{\alpha} \times \rP \Sigma_{\alpha}$ with $V \cong V_{\Pi} \times V_{\Sigma}$. Since $V_{\Pi} \in \rP \Pi_{\alpha}$ is a reductive summand of $V \in \rM \cR_{\beta}$, we have $\alpha \leq \beta$ by Corollary \ref{mixed reduction}, which contradicts $\beta < \alpha$.
\end{proof}

We obtain a non-Archimedean analogue of \cite{Eda83-2} Theorem 1:

\begin{thm}
\label{non-Archimedean Eda's theorem}
Suppose that $\v{k}$ is dense in $\R_{\geq 0}$ and $\kappa$ satisfies the non-measurability condition. Then $\cR$ is the disjoint union of $F_k$, $\rM \cR_{\alpha}$, $\rP \Pi_{\alpha}$, and $\rP \Sigma_{\alpha}$ with $\alpha \in \Ord \setminus \ens{0}$.
\end{thm}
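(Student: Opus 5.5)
The statement has two parts: a covering part ($\cR$ is the union of the listed classes) and a disjointness part (the classes are pairwise disjoint). I will prove them separately.

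\textbf{Covering.} Let $V \in \cR$ and set $\alpha \coloneqq \rank_{\cR}(V)$.

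If $\alpha = 0$, then $V \in \cR_0 = F_k$ by Proposition \ref{rank 0}.

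If $\alpha \neq 0$, Theorem \ref{mixed decomposition} gives
\be
V \in F_k \cup \cR_{< \alpha} \cup \rM \cR_{< \alpha} \cup \rP \cR_{\alpha}.
\ee
The minimality of $\alpha$ excludes $\cR_{< \alpha}$. Hence $V$ lies in $F_k$, in some $\rM \cR_{\beta}$ with $\beta \in \alpha$, or in $\rP \Pi_{\alpha} \cup \rP \Sigma_{\alpha}$.

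In the mixed case, $\beta \neq 0$ may be assumed, since $\rM \cR_0 \subset F_k$ by Proposition \ref{degeneracy of mixed} (2). In the pure case, $\alpha \neq 0$ as required.

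For the reverse inclusion:
\bi
\item[(a)] $\rP \cR_{\alpha} \subset \cR_{\alpha} \subset \cR$ holds by the inclusions recorded after the definition of pure Reid hierarchy.
\item[(b)] $\rM \cR_{\alpha} \subset \cR_{\alpha+1} \subset \cR$ holds by Proposition \ref{degeneracy of mixed} (1).
\item[(c)] $F_k = \cR_0 \subset \cR$.
\ei

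\textbf{Disjointness.} I check every pair of listed classes with indices in $\Ord \setminus \ens{0}$.
\bi
\item[(i)] $F_k$ against pure classes: $F_k \cap \rP \cR_{\alpha} = \emptyset$ for $\alpha \neq 0$ by Proposition \ref{pure rank 0} (2).
\item[(ii)] $F_k$ against mixed classes: an element of $\rM \cR_{\alpha}$ with $\alpha \neq 0$ has $V_{\Pi} \in \rP \Pi_{\alpha}$ as a direct factor. This factor has infinite dimension by Proposition \ref{pure rank 0} (2). Hence $\dim_k V \geq \omega$, so $V \notin F_k$.
\item[(iii)] $\rP \Pi_{\alpha}$ against $\rP \Sigma_{\beta}$: disjoint by Corollary \ref{pure disjointness 1}.
\item[(iv)] $\rP \Pi_{\alpha}$ against $\rP \Pi_{\beta}$, and $\rP \Sigma_{\alpha}$ against $\rP \Sigma_{\beta}$, for $\alpha \neq \beta$: disjoint by Corollary \ref{pure disjointness 2}.
\item[(v)] Mixed against pure: disjoint by Corollary \ref{pure mixed disjointness}.
\item[(vi)] Mixed against mixed with distinct indices: disjoint by Corollary \ref{mixed disjointness}.
\ei

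\textbf{Main obstacle.} The real difficulty lies entirely in the earlier corollaries, which rest on Proposition \ref{pure reductive summand} and, through it, on the non-Archimedean Chase-type lemma. All of them use the density of $\v{k}$ and the non-measurability condition. At this stage the only delicate point is bookkeeping. Index $0$ must be excluded, since $\rP \Pi_0 = \rP \Sigma_0 = \Isom(\ens{k}) \subset F_k$ and $\rM \cR_0 \subset F_k$. Also, the covering argument must produce the mixed class with a nonzero index. Both points are handled by Propositions \ref{pure rank 0} and \ref{degeneracy of mixed}.
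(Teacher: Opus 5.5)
Your proposal is correct and follows essentially the same route as the paper: covering via Proposition \ref{rank 0}, Proposition \ref{degeneracy of mixed} (2) and Theorem \ref{mixed decomposition}, and pairwise disjointness via Proposition \ref{pure rank 0} (2) and Corollaries \ref{pure disjointness 1}, \ref{pure disjointness 2}, \ref{pure mixed disjointness}, \ref{mixed disjointness}. The only deviation is in (ii). There your dimension argument says that $V_{\Pi}$ has infinitely many nonzero factors, as established in the proof (not the bare statement) of Proposition \ref{pure rank 0} (2). This justifies $F_k \cap \rM \cR_{\alpha} = \emptyset$ for $\alpha \neq 0$ more cleanly than the paper's citation of Proposition \ref{degeneracy of mixed} (2), which only concerns index $0$.
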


We note that Theorem \ref{non-Archimedean Eda's theorem} does not claim the non-emptiness of each group. Indeed, a group can be empty by Proposition \ref{essentially small}.

\begin{proof}
By Proposition \ref{pure rank 0} (1), Proposition \ref{degeneracy of mixed} (2), and Theorem \ref{mixed decomposition}, we have
\be
\cR = F_k \cup \bigcup_{\alpha \in \Ord \setminus \ens{0}} \rM \cR_{\alpha} \cup \bigcup_{\alpha \in \Ord \setminus \ens{0}} \rP \Pi_{\alpha} \cup \bigcup_{\alpha \in \Ord \setminus \ens{0}} \rP \Sigma_{\alpha}.
\ee
The disjointedness of $F_k$ and the other components follows from Proposition \ref{pure rank 0} (2) and Proposition \ref{degeneracy of mixed} (2). The disjointedness of $M_k$ and the other components follows from Corollary \ref{pure mixed disjointness} and Corollary \ref{mixed disjointness}. The disjointedness for the last two groups of components follows from Corollary \ref{pure disjointness 1} and Corollary \ref{pure disjointness 2}.
\end{proof}

\begin{crl}
\label{Delta characterisation}
Suppose that $\v{k}$ is dense in $\R_{\geq 0}$ and $\kappa$ satisfies the non-measurability condition. Then for any $\alpha \in \Ord$, the following hold:
\bi
\item[(1)] If $\alpha = 0$, then the equality $\Delta_0 = F_k$ holds.
\item[(2)] If $\alpha \in \Suc$, then the equality $\Delta_{\alpha} = \cR_{< \alpha} \cup \rM \cR_{\alpha_{-}}$ holds.
\item[(3)] If $\alpha \notin \Suc \cup \ens{0}$, then the equality $\Delta_{\alpha} = \cR_{< \alpha}$ holds.
\ei
\end{crl}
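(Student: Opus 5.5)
We determine $\Delta_\alpha = \Pi_\alpha \cap \Sigma_\alpha$ by combining the decomposition of Theorem \ref{mixed decomposition} with the disjointness statements of Theorem \ref{non-Archimedean Eda's theorem}.

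For (1), Proposition \ref{rank 0} gives $\Pi_0 = \Sigma_0 = F_k$, so $\Delta_0 = F_k$.

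For $\alpha \neq 0$, Theorem \ref{mixed decomposition} gives
\be
\Delta_{\alpha} & = & \left( F_k \cup \cR_{< \alpha} \cup \rM \cR_{< \alpha} \cup \rP \Pi_{\alpha} \right) \cap \left( F_k \cup \cR_{< \alpha} \cup \rM \cR_{< \alpha} \cup \rP \Sigma_{\alpha} \right) \\
& = & F_k \cup \cR_{< \alpha} \cup \rM \cR_{< \alpha} \cup \left( \rP \Pi_{\alpha} \cap \rP \Sigma_{\alpha} \right).
\ee
The last piece is empty by Corollary \ref{pure disjointness 1}, and $F_k \subset \cR_{< \alpha}$ by Proposition \ref{ordered}. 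Hence $\Delta_\alpha = \cR_{< \alpha} \cup \rM \cR_{< \alpha}$. It remains to compare $\rM \cR_{< \alpha} = \bigcup_{\gamma \in \alpha} \rM \cR_{\gamma}$ with $\cR_{< \alpha}$.

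By Proposition \ref{degeneracy of mixed} (1), $\rM \cR_{\gamma} \subset \cR_{\gamma + 1}$ for every $\gamma \in \alpha$.

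For (3), $\alpha$ is a non-zero limit ordinal, so $\gamma + 1 \in \alpha$ for every $\gamma \in \alpha$. Therefore $\rM \cR_{< \alpha} \subset \cR_{< \alpha}$, as in Proposition \ref{degeneracy of mixed} (3), and $\Delta_\alpha = \cR_{< \alpha}$.

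For (2), $\alpha \in \Suc$. For $\gamma \in \alpha_{-}$ we have $\gamma + 1 \leq \alpha_{-}$, so $\rM \cR_{\gamma} \subset \cR_{< \alpha}$. The only remaining term is $\gamma = \alpha_{-}$, which gives $\Delta_\alpha = \cR_{< \alpha} \cup \rM \cR_{\alpha_{-}}$.

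The main point needing care is the use of the disjointness $\rP\Pi_\alpha \cap \rP\Sigma_\alpha = \emptyset$. This is exactly where the hypotheses (density of $\v{k}$ and the non-measurability condition on $\kappa$) enter, through Corollary \ref{pure disjointness 1}. Everything else is ordinal bookkeeping.

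The stated equality in (2) does not require showing that $\rM \cR_{\alpha_{-}}$ is disjoint from $\cR_{< \alpha}$; that disjointness would follow from Theorem \ref{non-Archimedean Eda's theorem} but is not needed here.
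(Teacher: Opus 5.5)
Your proposal is correct and is the paper's argument written out in full. The paper's one-line proof cites Proposition \ref{rank 0}, Proposition \ref{degeneracy of mixed}, and Theorem \ref{non-Archimedean Eda's theorem}; the part of that theorem actually needed is the decomposition of Theorem \ref{mixed decomposition}, on which its proof rests, together with the disjointness $\rP \Pi_{\alpha} \cap \rP \Sigma_{\alpha} = \emptyset$ from Corollary \ref{pure disjointness 1}, which is exactly what you invoke before doing the same ordinal bookkeeping with Proposition \ref{degeneracy of mixed} (1), reading $\rM \cR_{< \alpha}$ as the intended $\bigcup_{\gamma \in \alpha} \rM \cR_{\gamma}$.
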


\begin{proof}
The assertions immediately follow from Proposition \ref{rank 0}, Proposition \ref{degeneracy of mixed}, and Theorem \ref{non-Archimedean Eda's theorem}.
\end{proof}

We obtain a non-Archimedean analogue of Zimmermann-Huisgen's theorem distinguishing various $\Z$-kernel groups (cf.\ \cite{Zim79}).

\begin{crl}
\label{distinguishing}
Suppose that $\v{k}$ is dense in $\R_{\geq 0}$ and $\kappa$ satisfies the non-measurability condition. Let $\lambda \in (\Card \cap \kappa) \setminus \omega$. For each $(t,n) \in \ens{0,1} \times \omega$, we define $V_{t,n}$ in the following recursive way:
\be
V_{t,n} & \coloneqq &
\left\{
\begin{array}{ll}
k & (n = 0) \\
\ell^{\infty}(\lambda,V_{1-t,n-1}) & (n \neq 0 \land t = 0) \\
\rC_0(\lambda,V_{1-t,n-1}) & (n \neq 0 \land t = 1)
\end{array}
\right.
\ee
Then for any $((t,n),(t',n')) \in (\ens{0,1} \times \omega_{> 0})^2$, the following are equivalent:
\bi
\item[(1)] The isomorphism $V_{t,n} \cong V_{t',n'}$ holds.
\item[(2)] The equality $(t,n) = (t',n')$ holds.
\ei
\end{crl}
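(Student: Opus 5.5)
The implication (2)$\Rightarrow$(1) is trivial, so the work is in (1)$\Rightarrow$(2). The plan is to locate each $V_{t,n}$ with $n \geq 1$ in exactly one component of the disjoint decomposition of Theorem \ref{non-Archimedean Eda's theorem}. Concretely, I will show by induction on $n \geq 1$ that
\be
V_{0,n} \in \rP \Pi_{n}, \qquad V_{1,n} \in \rP \Sigma_{n}.
\ee
Given this, an isomorphism $V_{t,n} \cong V_{t',n'}$ puts a single space into $\rP \rA_n \cap \rP \rA'_{n'}$ for the corresponding symbols. If $t \neq t'$, this contradicts Corollary \ref{pure disjointness 1}. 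If $t = t'$ but $n \neq n'$, it contradicts Corollary \ref{pure disjointness 2}. Hence $(t,n) = (t',n')$.

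For the base case $n = 1$, we have $V_{0,1} = \ell^{\infty}(\lambda,k) = \Pi_\lambda \cW$ with $\cW$ constantly $k \in \rP \Sigma_0$. Membership in $\rP \Pi_1$ requires $\cW \in \rP \cR_{< 1}^{\lambda \to \Sigma}$, i.e. for $\beta = 0$ that $\#\set{i}{\cW(i) \in \rP \Sigma_{< 1} \setminus \rP \Sigma_{< 0}} = \lambda \geq \omega$. This holds because $\lambda \notin \omega$ and $\rP \Sigma_{< 0} = \emptyset$. The case $V_{1,1} = \rC_0(\lambda,k) \in \rP \Sigma_1$ is symmetric. We also need $\lambda \in \kappa$, so that the index set lies in $\Set_{< \kappa}$; this is given by the hypothesis $\lambda \in \Card \cap \kappa$.

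For the inductive step, assume $V_{1-t,n-1} \in \rP \rB_{n-1}$, where $\rB$ is the symbol opposite to the one attached to $t$. Then $V_{t,n} = \rA_\lambda \cV$ with $\cV$ constantly $V_{1-t,n-1} \in \rP \cR_{< n}$. For every $\beta \in n$, the constant value lies in $\rP \rB_{< n} \setminus \rP \rB_{< \beta}$. This amounts to $V_{1-t,n-1} \notin \rP \rB_{\gamma}$ for all $\gamma < \beta \leq n-1$, which is exactly Corollary \ref{pure disjointness 2} applied to $\rP \rB_{n-1} \cap \rP \rB_\gamma$ with $\gamma \neq n-1$. The relevant index set therefore has cardinality $\lambda \geq \omega$, so $\cV \in \rP \cR_{< n}^{\lambda \to \rB}$ and $V_{t,n} \in \rP \rA_n$.

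The main obstacle is this non-membership in lower pure levels. Since the class $\rP \rB_{< \beta}$ is closed under isomorphism, it is not automatic, and it is precisely where the density and non-measurability hypotheses enter, through Corollary \ref{pure disjointness 2}. The remaining steps are bookkeeping with the definitions.
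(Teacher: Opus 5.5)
Your proof is correct and follows the paper's route. You induct on $n$ to place $V_{0,n}$ in $\rP \Pi_n$ and $V_{1,n}$ in $\rP \Sigma_n$, then conclude by disjointness of these components; the paper cites Theorem \ref{non-Archimedean Eda's theorem}, whose relevant disjointness is exactly Corollaries \ref{pure disjointness 1} and \ref{pure disjointness 2}. You also usefully make explicit what the paper's ``by induction on $n$'' leaves implicit: the inductive step needs Corollary \ref{pure disjointness 2} to guarantee $V_{1-t,n-1} \notin \rP \rB_{< \beta}$ for every $\beta \leq n-1$.
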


\begin{proof}
For any $(t,n) \in \ens{0,1} \times \omega_{> 0}$, we have $V_{t,n} \in \rP \Pi_n$ if $t = 0$ and $V_{t,n} \in \rP \Sigma_n$ if $t = 1$ by induction on $n$. Therefore, the assertion follows from Theorem \ref{non-Archimedean Eda's theorem}.
\end{proof}

By Corollary \ref{distinguishing} applied to $\lambda = \omega$ (with replacement of $\kappa$ by $\omega_1$ if $\kappa$ is $\omega$), we obtain that the isomorphism classes of the Banach $k$-vector spaces
\be
& & \ell^{\infty}(\omega,k), \rC_0(\omega,k), \ell^{\infty}(\omega,\rC_0(\omega,k)), \rC_0(\omega,\ell^{\infty}(\omega,k)), \\
& & \ell^{\infty}(\omega,\rC_0(\omega,\ell^{\infty}(\omega,k))), \rC_0(\omega,\ell^{\infty}(\omega,\rC_0(\omega,k))),
\ee
and so on are all distinct if $\v{k}$ is dense in $\R_{\geq 0}$.

\section{Counterexmples}
\label{Counterexamples}

In the following in this paper, we assume that $\v{k}$ is dense in $\R_{\geq 0}$ and $\kappa$ satisfies the non-measurability condition. We provides examples of Banach $k$-vector spaces which do not belong to $\cR$.

\begin{thm}
\label{bounded continuous function}
The Banach $k$-vector space $\rCbd(\Q,k)$ of bounded continuous functions $\Q \to k$ equipped with the supremum norm does not belong to $\cR$.
\end{thm}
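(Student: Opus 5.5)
We argue by contradiction: suppose $V \coloneqq \rCbd(\Q,k) \in \cR$, and use Theorem \ref{non-Archimedean Eda's theorem} to place $V$ in exactly one of $F_k$, $\rM \cR_{\alpha}$, $\rP \Pi_{\alpha}$, $\rP \Sigma_{\alpha}$ with $\alpha \neq 0$. Since $V$ is infinite dimensional, $V \notin F_k$. In the remaining cases $V$ is, up to isomorphism, a product $\Pi_I \cW$ or $\Sigma_J \cV$ with infinitely many non-zero factors, or a product of two such spaces. The plan is to exploit one concrete feature of $\rCbd(\Q,k)$ that is incompatible with both shapes: $\Q$ is zero dimensional, so $V$ contains plenty of idempotent-like splittings. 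At the same time $\Q$ has no isolated points, so every clopen piece again looks like $\Q$. Indeed every non-empty clopen $U \subset \Q$ is homeomorphic to $\Q$ by Sierpiński's theorem, so $\rCbd(U,k) \cong V$ isometrically. Moreover, for any partition of $\Q$ into clopens $U_0 \sqcup U_1$ we have $V \cong \rCbd(U_0,k) \times \rCbd(U_1,k)$.

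First, we show that $\ell^{\infty}(\omega,k)$ and $\rC_0(\omega,k)$, and more usefully $V$ itself in its various copies, occur as reductive summands in controlled ways. Write $\Q$ as a disjoint union of countably many non-empty clopens $U_n$ converging to a point. For instance, take $U_n = \Q \cap (\sqrt{2}+2^{-n-1}, \sqrt{2}+2^{-n})$ together with their negatives, arranged so that $\bigsqcup_n U_n = \Q$ and the union is locally finite in $\Q$. The point $\sqrt{2}$ is irrational, so no point of $\Q$ sees infinitely many $U_n$. Hence $V \cong \Pi_{\omega}(\rCbd(U_n,k))_n \cong \ell^{\infty}(\omega,V)$. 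Second, we realise $\rC_0(\omega,V)$ as a reductive (in fact direct) summand of $V$. Choose a rational limit point $q$ and clopens $U_n$ accumulating only at $q$, with complement clopen away from them. The subspace of functions supported on $\bigsqcup U_n$ that tend to $0$ near $q$ is then isometric to $\rC_0(\omega,V)$. Continuity at $q$ is needed, and $q$ lies in the complementary piece, so this subspace is complemented by the usual retraction of subtracting the value at $q$. This gives $V \cong \ell^{\infty}(\omega,V)$, with $\rC_0(\omega,V)$ a direct summand of $V$.

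Next, we derive the contradiction from the hierarchy. From $V \cong \ell^{\infty}(\omega,V)$ we get $V \in \Pi_{\rank_{\cR}(V)+1}$. It is more useful to compare pure ranks, so let $r$ be the index with $V$ in the piece of rank $r$ given by Theorem \ref{non-Archimedean Eda's theorem}. The space $\rC_0(\omega,V)$ lies in $\rP \Sigma_{r'}$ with $r' \geq r$: the factor $V$ decomposes into pure pieces of rank $r$ (or $\rM \cR_r$), and Propositions \ref{pure convergent successor stabilty}, \ref{pure convergent Limit stabilty} and \ref{mixed finite product} apply. It is also a reductive summand of $V$. If $V \in \rP \Pi_r$, Proposition \ref{pure reductive summand} (1) gives $r' < r$, a contradiction. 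If $V \in \rM \cR_r$, Corollary \ref{mixed reduction} forces $r' \leq r$, so $r' = r$. The symmetric argument then uses $\ell^{\infty}(\omega,V) \cong V$ to produce a $\rP \Pi_{r+1}$-space, for example via $\ell^{\infty}(\omega,\rC_0(\omega,V))$, which is again a summand of $V$ by the same clopen construction. This contradicts Corollary \ref{mixed reduction}. If $V \in \rP \Sigma_r$, we swap roles: $\ell^{\infty}(\omega,V) \cong V$ lies in $\rP \Pi$ of rank $\geq r$ and is a reductive summand of $V$, which contradicts Proposition \ref{pure reductive summand} (2).

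The main obstacle is the rank bookkeeping. We must verify that $\rC_0(\omega,V)$ and $\ell^{\infty}(\omega,V)$ really are pure of rank at least that of $V$ in the correct family; this needs a short lemma that for $X \in \rP \cR_r \cup \rM \cR_r$ one has $\rC_0(\omega,X) \in \rP \Sigma_{\geq r}$ and $\ell^{\infty}(\omega,X) \in \rP \Pi_{\geq r}$. The mixed case is the subtlest, and we would handle it first, using $\rC_0(\omega,X\times Y)\cong \rC_0(\omega,X)\times\rC_0(\omega,Y)$ together with Proposition \ref{pure finite product}.
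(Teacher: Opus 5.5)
Your proof is correct, but it extracts the contradiction by a different mechanism than the paper.

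\textbf{What is shared.} The topological input is the same in both arguments. $\Q \cong \omega \times \Q$ gives $V \cong \ell^{\infty}(\omega,V)$. Realising $\Q$ as $(\omega+1) \times \Q$ with $\ens{\omega} \times \Q$ collapsed gives $V \cong k \oplus \rC_0(\omega,V)$. Your explicit partition around $\sqrt{2}$ does not actually exhaust $\Q$. The pieces $\Q \cap (n\sqrt{2},(n+1)\sqrt{2})$, $n \in \Z$, do, and no local finiteness is needed since the pieces are open.

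\textbf{The paper's route.} It applies Proposition \ref{absorbing law} to $\rC_0(\omega,V) \in \cR \setminus F_k$ to get $\ell^{\infty}(\omega,V) \cong V \cong \rC_0(\omega,V)$. This contradicts Lemma \ref{ell infty vs c0}, so the paper only uses the disjointness of the pure $\Pi$- and pure $\Sigma$-classes from Theorem \ref{non-Archimedean Eda's theorem}. The same packaging is reused verbatim for the dual in Theorem \ref{measure space}.

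\textbf{Your route.} You never need the absorbing law or an isomorphism $V \cong \rC_0(\omega,V)$. You only use that $\rC_0(\omega,V)$ is a direct summand, and feed this into the rank inequalities of Proposition \ref{pure reductive summand} and Corollary \ref{mixed reduction}. So your argument applies to any $V \in \cR$ with $V \cong \ell^{\infty}(\omega,V)$ that has $\rC_0(\omega,V)$ as a reductive summand. The price is the case split and the bookkeeping lemma.

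\textbf{Simplifications.} The bookkeeping lemma holds in a sharper form: $\rC_0(\omega,X) \in \rP \Sigma_r$ for $X \in \rP \Sigma_r$, and $\rC_0(\omega,X) \in \rP \Sigma_{r+1}$ for $X \in \rP \Pi_r \cup \rM \cR_r$ (dually for $\ell^{\infty}$). With this, the mixed case closes at once, because Corollary \ref{mixed reduction} would give $r+1 \leq r$. Your detour through $\ell^{\infty}(\omega,\rC_0(\omega,V))$ is therefore unnecessary. In fact the case split disappears entirely. $V \cong \ell^{\infty}(\omega,V)$ already forces $V \in \rP \Pi_s$ for some $s \geq 1$. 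Then the summand $\rC_0(\omega,V) \in \rP \Sigma_{s+1}$ contradicts Proposition \ref{pure reductive summand} (1).
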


In order to prove Theorem \ref{bounded continuous function}, we prepare a non-Archimedean analogue of \cite{Eda83-2} Corollary.

\begin{lmm}
\label{ell infty vs c0}
For any $V \in \cR$, $\ell^{\infty}(\omega,V)$ and $\rC_0(\omega,V)$ are not isomorphic to each other.
\end{lmm}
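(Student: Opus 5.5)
The plan is to use the classification of Theorem \ref{non-Archimedean Eda's theorem}. I will show that $\ell^{\infty}(\omega,V)$ always lands in some $\rP \Pi_{\gamma}$ and $\rC_0(\omega,V)$ in some $\rP \Sigma_{\delta}$ with $\gamma,\delta \neq 0$. Corollary \ref{pure disjointness 1} then forbids an isomorphism. One degenerate case must be set aside: for $V = \ens{0}$ (which lies in $F_k$ as $\ell^{\infty}(\omega_{<0},k)$) both spaces are zero. The lemma must therefore be read for $V \neq \ens{0}$, which is the case needed for Theorem \ref{bounded continuous function}. By Theorem \ref{non-Archimedean Eda's theorem}, a nonzero $V \in \cR$ lies in exactly one of $F_k \setminus \ens{\ens{0}}$, $\rM \cR_{\alpha}$, $\rP \Pi_{\alpha}$, or $\rP \Sigma_{\alpha}$ with $\alpha \neq 0$, and I will treat these four cases in turn.

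\emph{Case $V \in F_k \setminus \ens{\ens{0}}$.} Here $V \cong k^n$ with $1 \leq n < \omega$, so $\ell^{\infty}(\omega,V) \cong \ell^{\infty}(\omega,k)$ and $\rC_0(\omega,V) \cong \rC_0(\omega,k)$ by reindexing over $\omega \times n$. The constant family $(k)_{i \in \omega}$ lies in $\rP \cR_{<1}^{\omega \to \rA}$ for both choices of $\rA$, because the only condition is at $\beta = 0$ and all $\omega$ entries lie in $\rP \rA_{0} = \Isom(\ens{k})$. Hence $\ell^{\infty}(\omega,k) \in \rP \Pi_1$ and $\rC_0(\omega,k) \in \rP \Sigma_1$.

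\emph{Case $V \in \rP \rA_{\alpha}$ with $\alpha \neq 0$.} Consider the constant family $\cV \coloneqq (V)_{i \in \omega}$, which lies in $\rP \cR_{<\alpha+1}^{\omega}$. For every $\beta \leq \alpha$, Corollary \ref{pure disjointness 2} gives $V \notin \rP \rA_{<\beta}$, so all $\omega$ indices lie in $\rP \rA_{<\alpha+1} \setminus \rP \rA_{<\beta}$ and hence $\cV \in \rP \cR_{<\alpha+1}^{\omega \to \rA}$. Two consequences follow.
\bi
\item[(a)] By Proposition \ref{pure convergent successor stabilty}, $\rA_{\omega} \cV \in \rP \rA_{\alpha}$.
\item[(b)] By the definition of $\rP \rB_{\alpha+1}$, $\rB_{\omega} \cV \in \rP \rB_{\alpha+1}$.
\ei
Thus one of the two spaces is pure $\Pi$ and the other is pure $\Sigma$, both of nonzero rank.

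\emph{Case $V \in \rM \cR_{\alpha}$ with $\alpha \neq 0$.} Write $V \cong V_{\Pi} \times V_{\Sigma}$ with $V_{\Pi} \in \rP \Pi_{\alpha}$ and $V_{\Sigma} \in \rP \Sigma_{\alpha}$. Then $\ell^{\infty}(\omega,V) \cong \ell^{\infty}(\omega,V_{\Pi}) \times \ell^{\infty}(\omega,V_{\Sigma})$. By the previous case, the first factor lies in $\rP \Pi_{\alpha}$ and the second in $\rP \Pi_{\alpha+1}$. Proposition \ref{mixed finite product}, applied with outer rank $\alpha+1$ and inner rank $\alpha < \alpha+1$, places the product in $\rP \Pi_{\alpha+1}$. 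Symmetrically, $\rC_0(\omega,V) \in \rP \Sigma_{\alpha+1}$.

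In every case one space lies in some $\rP \Pi_{\gamma}$ and the other in some $\rP \Sigma_{\delta}$ with $\gamma,\delta \neq 0$, so Corollary \ref{pure disjointness 1} gives non-isomorphism. I expect the main obstacle to be the membership check $\cV \in \rP \cR_{<\alpha+1}^{\omega \to \rA}$, specifically the requirement that $V \notin \rP \rA_{<\beta}$ for all $\beta \leq \alpha$, together with the bookkeeping that Proposition \ref{mixed finite product} really applies with the ranks arranged as above. The first point relies on Corollary \ref{pure disjointness 2}, so the density and non-measurability hypotheses are essential there.
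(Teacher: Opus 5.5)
Your proof follows the paper's route. The paper's proof is the one-line observation that $\ell^{\infty}(\omega,\cdot)$ lands in pure-$\Pi$ classes and $\rC_0(\omega,\cdot)$ lands in pure-$\Sigma$ classes of non-zero rank, so Theorem \ref{non-Archimedean Eda's theorem} separates them.

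Your case analysis is more accurate than the paper's stated ``function formulae'' in two places:
\begin{itemize}
\item[(i)] You are right that $V = \ens{0} = \ell^{\infty}(\omega_{< 0},k) \in F_k$ is a genuine exception to the lemma as stated. The paper's claim that $\ell^{\infty}(\omega,\cdot)$ maps $F_k$ into $\rP \Pi_1$ overlooks it. Only $V = \rCbd(\Q,k) \neq \ens{0}$ is used afterwards.
\item[(ii)] Your item (a), $\ell^{\infty}(\omega,V) \in \rP \Pi_{\alpha}$ for $V \in \rP \Pi_{\alpha}$, is correct. The paper's $\rP \Pi_{\alpha + 1}$ is not: e.g.\ $\ell^{\infty}(\omega,\ell^{\infty}(\omega,k)) \cong \ell^{\infty}(\omega,k) \in \rP \Pi_1$, which is disjoint from $\rP \Pi_2$ by Corollary \ref{pure disjointness 2}.
\end{itemize}
Only the $\Pi$/$\Sigma$ type matters, so both arguments reach the conclusion.

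Your appeal to Proposition \ref{pure convergent successor stabilty} in (a) is safe here because every entry is $V \in \rP \rA_{\alpha}$ with $\alpha \geq 1$. The proposition fails in its stated generality: with successor $1$ it would put $\ell^{\infty}(\omega,k)$ into $\rP \Pi_0$. You could equally get (a) by reindexing $V \cong \rA_I \cW$ over $\omega \times I$.

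Two small repairs are needed.

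First, in the mixed case, Proposition \ref{mixed finite product} does not apply as stated. Its second factor must be of the opposite pure type, but $\ell^{\infty}(\omega,V_{\Pi}) \in \rP \Pi_{\alpha}$ and $\ell^{\infty}(\omega,V_{\Sigma}) \in \rP \Pi_{\alpha + 1}$ are both pure-$\Pi$. Cite Proposition \ref{pure finite product} instead, which gives $\rP \Pi_{\max \ens{\alpha,\alpha + 1}} = \rP \Pi_{\alpha + 1}$. Symmetrically, it gives $\rC_0(\omega,V) \in \rP \Sigma_{\alpha + 1}$.

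Second, every membership you use, starting with $(k)_{i \in \omega} \in \rP \cR_{< 1}^{\omega \to \rA}$, needs $\omega \in \Set_{< \kappa}$. For $\kappa = \omega$, every $\rP \Pi_{\alpha}$ and $\rP \Sigma_{\alpha}$ with $\alpha \neq 0$ is empty, so your $F_k$ case breaks. As the paper does, first replace $\kappa$ by $\omega_1$. This is harmless: $\cR = F_k$ when $\kappa = \omega$, $F_k$ does not depend on $\kappa$, and $\omega_1$ satisfies the non-measurability condition.
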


\begin{proof}
We replace $\kappa$ by $\omega_1$ if $\kappa$ is $\omega$. The assertion immediately follows from Theorem \ref{non-Archimedean Eda's theorem}, because they do not share a group in any case. Indeed, $\ell^{\infty}(\omega,\cdot)$ defines function formulae
\be
F_k & \to & \rP \Pi_1 \\
\rP \cR_{\alpha} \cup \rM \cR_{\alpha} & \to & \rP \Pi_{\alpha + 1}
\ee
and
$\rC_0(\omega,\cdot)$ defines function formulae
\be
F_k & \to & \rP \Sigma_1 \\
\rP \cR_{\alpha} \cup \rM \cR_{\alpha} & \to & \rP \Sigma_{\alpha + 1}
\ee
for any $\alpha \in \Ord \setminus \ens{0}$.
\end{proof}

\begin{proof}[Proof of Theorem \ref{bounded continuous function}]
The proof is completely parallel to the proof of \cite{Eda83-2}. Since $\Q$ is homeomorphic to $\omega \times \Q$, we have
\be
\rCbd(\Q,k) \cong \rCbd(\omega \times \Q,k) \cong \ell^{\infty}(\omega,\rCbd(\Q,k)).
\ee
Since $\Q$ is homeomorphic to the quotient $((\omega + 1) \times \Q)/(\ens{\omega} \times \Q)$ of $(\omega + 1) \times \Q$ under the identifications $(\omega,q_0) \sim (\omega,q_1)$ for all $(q_0,q_1) \in \Q^2$, we have
\be
\rCbd(\Q,k) & \cong & \rCbd(((\omega + 1) \times \Q)/(\ens{\omega} \times \Q),k) \\
& \cong & k^{\ens{\ens{\omega} \times \Q}} \oplus \set{f \in \rCbd(((\omega + 1) \times \Q)/(\ens{\omega} \times \Q),k)}{f(\ens{\omega} \times \Q) = 0} \\
& \cong & k \oplus \set{f \in \rCbd((\omega + 1) \times \Q,k)}{f \upharpoonright (\ens{\omega} \times \Q) = 0} \\
& \cong & k \oplus \rC_0(\omega,\rCbd(\Q,k)).
\ee
If $\kappa = \omega$, then we have $\cR = F_k$, and hence $\rCbd(\Q,k)^{\vee} \notin \cR$. Suppose $\kappa \neq \omega$. If $\rCbd(\Q,k) \in \cR$, then we have
\be
\ell^{\infty}(\omega,\rCbd(\Q,k)) \cong \rCbd(\Q,k) \cong k \oplus \rC_0(\omega,\rCbd(\Q,k)) \cong \rC_0(\omega,\rCbd(\Q,k)),
\ee
by Proposition \ref{absorbing law}, but this contradicts Lemma \ref{ell infty vs c0}.
\end{proof}

We show a non-spherically complete analogue of \cite{Eda83-2} Theorem 2:

\begin{thm}
\label{measure space}
If $k$ is not spherically complete, then $\rCbd(\Q,k)^{\vee}$ does not belong to $\cR$.
\end{thm}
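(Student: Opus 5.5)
Following the proof of Theorem \ref{bounded continuous function}, I would dualise the two isomorphisms obtained there and again conclude with Lemma \ref{ell infty vs c0}. Set $X \coloneqq \rCbd(\Q,k)$. The proof of Theorem \ref{bounded continuous function} provides isometric isomorphisms
\be
X \cong \ell^{\infty}(\omega,X), \qquad X \cong k \oplus \rC_0(\omega,X).
\ee
Applying $(\cdot)^{\vee}$ gives $X^{\vee} \cong \ell^{\infty}(\omega,X)^{\vee}$ and $X^{\vee} \cong k \oplus \rC_0(\omega,X)^{\vee}$, using $(V_0 \oplus V_1)^{\vee} \cong V_0^{\vee} \oplus V_1^{\vee}$ with max norms and $k^{\vee} \cong k$.

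The key step is to compute these duals. For completed direct sums, restriction to the summands gives an isometric isomorphism $\rC_0(\omega,X)^{\vee} \cong \ell^{\infty}(\omega,X^{\vee})$. To see this, take a functional $\varphi$. Its restrictions $\varphi_n$ satisfy $\n{\varphi_n} \leq \n{\varphi}$. Conversely, a bounded family $(\varphi_n)$ determines $\varphi(x) = \sum_n \varphi_n(x(n))$. This series converges because $x(n) \to 0$, and by the ultrametric inequality $\n{\varphi} = \sup_n \n{\varphi_n}$. This holds with no assumption on $k$.

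Hence $X^{\vee} \cong k \oplus \ell^{\infty}(\omega,X^{\vee})$. If $X^{\vee} \in \cR \setminus F_k$, Proposition \ref{absorbing law} yields $X^{\vee} \cong \ell^{\infty}(\omega,X^{\vee})$. The case $X^{\vee} \in F_k$ is impossible because $X$ is infinite-dimensional, and the case $\kappa = \omega$ is handled as in Theorem \ref{bounded continuous function}.

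The other side is harder. I need $\ell^{\infty}(\omega,X)^{\vee} \cong \rC_0(\omega,X^{\vee})$, and this is where the failure of spherical completeness enters. Restriction to summands always gives a norm-$\leq 1$ map $\ell^{\infty}(\omega,X)^{\vee} \to \ell^{\infty}(\omega,X^{\vee})$. Its kernel is the space of functionals vanishing on $\rC_0(\omega,X)$. I would argue in two steps.

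First, every functional on $\ell^{\infty}(\omega,X)$ is determined by its restrictions. Equivalently, $(\ell^{\infty}(\omega,X)/\rC_0(\omega,X))^{\vee} = 0$. For $X = k$ this is the classical fact that $(\ell^{\infty}/c_0)^{\vee} = 0$ when $k$ is not spherically complete. For general $X$ I would reduce to that case. Given $x \in \ell^{\infty}(\omega,X)$ and $\psi$ vanishing on $\rC_0$, I would choose a bounded family $(\varphi_n)$ in $X^{\vee}$ with $|\varphi_n(x(n))|$ comparable to $\n{x(n)}$. The density of $\v{k}$ and the Hahn--Banach-type extension used for $\rCbd(\Q,k)$ should supply this, since $X$ has a dense subspace of countable type. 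I would then factor $\psi$ through a map $\ell^{\infty}(\omega,k) \to \ell^{\infty}(\omega,X)$ and conclude $\psi = 0$.

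Second, the family of restrictions lies in $\rC_0(\omega,X^{\vee})$. If $\n{\varphi_n} \geq \epsilon$ for infinitely many $n$, pick $x(n)$ of norm $\leq 1$ with $|\varphi_n(x(n))|$ close to $\n{\varphi_n}$, and rescale by scalars so the partial sums do not converge. This contradicts $\varphi$ being determined by continuity in the manner of Chase-type arguments; the non-Archimedean Chase lemma of \cite{Mih26} Corollary 3.9, applied to $\varphi \in \Hom(\Pi_{\omega}(X),\Sigma_{\{\ast\}}(k))$, gives exactly this finiteness. I expect this second step, namely controlling the kernel and the dual of $\ell^{\infty}$ via non-spherical completeness, to be the main obstacle.

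With both computations, $X^{\vee} \cong \rC_0(\omega,X^{\vee})$ and $X^{\vee} \cong \ell^{\infty}(\omega,X^{\vee})$. Therefore $\ell^{\infty}(\omega,X^{\vee}) \cong \rC_0(\omega,X^{\vee})$, which contradicts Lemma \ref{ell infty vs c0} if $X^{\vee} \in \cR$.
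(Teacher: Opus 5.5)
Your strategy is the paper's: dualise $X\cong\ell^{\infty}(\omega,X)$ and $X\cong k\oplus\rC_0(\omega,X)$, use $\rC_0(\omega,X)^{\vee}\cong\ell^{\infty}(\omega,X^{\vee})$ and $\ell^{\infty}(\omega,X)^{\vee}\cong\rC_0(\omega,X^{\vee})$, absorb $k$ by Proposition \ref{absorbing law}, and contradict Lemma \ref{ell infty vs c0}. The paper does not prove the second duality; it cites it (\cite{Roo78} 4.21, \cite{MN89}). Your verification of the first duality is fine. Your own argument for the second duality, however, has a genuine gap in its second step.

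Corollary 3.9 of \cite{Mih26}, applied to $\varphi\in\Hom(\Pi_{\omega}(X),\Sigma_{\ens{\ast}}(k))$, says nothing. It only produces finite $I'_0\subset\omega$ and $J'_0\subset\ens{\ast}$ with $\n{\varphi_{\omega\setminus I'_0}^{\ens{\ast}\setminus J'_0}}<\n{\varphi}$, and taking $J'_0=\ens{\ast}$ makes the left side $0$. It cannot help in principle either. It assumes only density of $\v{k}$, whereas the decay $\n{\varphi\circ\sigma_{\ens{n}}}\to 0$ fails for spherically complete $k$ with dense value group. There, extending $a\mapsto\sum_n a(n)$ from $\rC_0(\omega,k)$ to $\ell^{\infty}(\omega,k)$ by Ingleton's Hahn--Banach theorem gives a norm-one functional equal to $1$ on every unit vector. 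Nor does divergence of $\sum_n\varphi_n(x(n))$ contradict anything. Continuity of $\varphi$ forces $\varphi(x)$ to be the limit of these partial sums only when the truncations of $x$ converge to $x$, i.e.\ when $x\in\rC_0(\omega,X)$.

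The clean repair is to reduce both steps to the scalar theorem $\ell^{\infty}(\omega,k)^{\vee}\cong\rC_0(\omega,k)$ for non-spherically complete $k$, which is exactly where that hypothesis enters. For a bounded family $x$, let $T_x\colon\ell^{\infty}(\omega,k)\to\ell^{\infty}(\omega,X)$ be $a\mapsto(a(n)x(n))_n$. It is bounded and maps $\rC_0(\omega,k)$ into $\rC_0(\omega,X)$.

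In step one, if $\psi$ vanishes on $\rC_0(\omega,X)$, then $\psi\circ T_x$ vanishes on $\rC_0(\omega,k)$, hence is $0$, so $\psi(x)=\psi(T_x(1,1,\ldots))=0$. The functionals $\varphi_n$ are unnecessary. Your reason for their existence is also wrong: $X$ contains $\ell^{\infty}(\omega,k)$ isometrically, so it is not of countable type.

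In step two, suppose $\n{\varphi_n}\geq\epsilon$ for $n$ in an infinite set $N$. Pick $x(n)$ in the unit ball with $\v{\varphi_n(x(n))}\geq\epsilon/2$ for $n\in N$, and $x(n)=0$ otherwise. Then $\varphi\circ T_x\in\ell^{\infty}(\omega,k)^{\vee}$ has coordinate values not tending to $0$, contradicting the scalar theorem.

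Separately, $X^{\vee}\notin F_k$ should be justified by the point evaluations at points of $\Q$, not by $\dim X=\infty$ alone, since over such $k$ infinite-dimensional spaces can have zero dual. Otherwise, simply cite the duality as the paper does.
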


\begin{proof}
By the duality between bounded direct product and completed direct sum (cf.\ \cite{Roo78} 4.21 and \cite{MN89} \S 12 Corollary 7.18), we have
\be
\rCbd(\Q,k)^{\vee} \cong \ell^{\infty}(\omega,\rCbd(\Q,k))^{\vee} \cong \rC_0(\omega,\rCbd(\Q,k)^{\vee})
\ee
and
\be
\rCbd(\Q,k)^{\vee} \cong (k \oplus \rC_0(\omega,\rCbd(\Q,k))^{\vee} \cong k \times \ell^{\infty}(\omega,\rCbd(\Q,k)^{\vee}).
\ee
If $\kappa = \omega$, then we have $\cR = F_k$, and hence $\rCbd(\Q,k)^{\vee} \notin \cR$. Suppose $\kappa \neq \omega$. If $\rCbd(\Q,k)^{\vee} \in \cR$, then we have
\be
\ell^{\infty}(\omega,\rCbd(\Q,k)^{\vee}) \cong \rCbd(\Q,k)^{\vee} \cong k \times \rC_0(\omega,\rCbd(\Q,k)^{\vee}) \cong \rC_0(\omega,\rCbd(\Q,k)^{\vee})
\ee
by Proposition \ref{absorbing law}, but this contradicts Lemma \ref{ell infty vs c0}.
\end{proof}

We denote by $\Rfl \subset \Ban(k)$ the subclass of reflexive Banach $k$-vector spaces.

\begin{prp}
\label{reflexive}
If $k$ is not spherically complete, then the inclusion $\cR \subset \Rfl$ holds.
\end{prp}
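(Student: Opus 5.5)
The plan is a transfinite induction on $\rank_{\cR}(V)$, using the duality between bounded direct product and completed direct sum already invoked in the proof of Theorem \ref{measure space}. Concretely, for any set $I$ and $\cW \in \Ban(k)^I$, the canonical isometric isomorphisms are
\be
(\Sigma_I \cW)^{\vee} \cong \Pi_I (\cW(i)^{\vee})_{i \in I}, \qquad (\Pi_I \cW)^{\vee} \cong \Sigma_I (\cW(i)^{\vee})_{i \in I},
\ee
where the second holds when $k$ is not spherically complete and $\# I$ is not $\omega_1$-measurable (cf.\ \cite{Roo78} 4.21 and \cite{MN89} \S 12 Corollary 7.18). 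The non-measurability condition on $\kappa$ guarantees that every index set occurring in $\cR$ is admissible.

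First I treat the base case $F_k$. Here $k$ is reflexive, since $k^{\vee} \cong k$ isometrically via evaluation at $1$. Finite products of reflexive spaces are reflexive, so every element of $F_k$ is reflexive. Reflexivity is also invariant under isometric isomorphism, so it suffices to treat representatives $\rA_I \cW$ with $\cW \in \cR_{< \alpha}^I$, where each $\cW(i)$ is reflexive by the induction hypothesis.

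For the inductive step I compute the double dual by applying the two dualities in turn:
\be
(\Pi_I \cW)^{\vee\vee} \cong (\Sigma_I (\cW(i)^{\vee}))^{\vee} \cong \Pi_I (\cW(i)^{\vee\vee}) \cong \Pi_I \cW,
\ee
and symmetrically
\be
(\Sigma_I \cW)^{\vee\vee} \cong (\Pi_I (\cW(i)^{\vee}))^{\vee} \cong \Sigma_I (\cW(i)^{\vee\vee}) \cong \Sigma_I \cW.
\ee
In the second chain, the step $(\Pi_I (\cW(i)^{\vee}))^{\vee} \cong \Sigma_I(\cW(i)^{\vee\vee})$ again uses non-spherical completeness together with the non-measurability of $\# I$.

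The main obstacle is not the existence of these abstract isomorphisms but the identification of their composite with the canonical map $V \to V^{\vee\vee}$. Reflexivity requires that the canonical map itself be an isomorphism. I would check this componentwise. Under the duality isomorphisms, the canonical map of $\rA_I \cW$ corresponds to $\rA_I$ applied to the canonical maps $\cW(i) \to \cW(i)^{\vee\vee}$. To see this, evaluate both sides on an element $x$ and a functional $\phi = (\phi_i)_{i}$: one gets $\phi(x) = \sum_i \phi_i(x(i))$ in both cases, using the explicit description of the dualities as pairings given by sums (convergent in the non-Archimedean sense). Since a family of isometric isomorphisms induces an isometric isomorphism on $\rA_I$, the canonical map is an isomorphism. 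The delicate part is justifying the sum formula for functionals on $\Pi_I \cW$. This is exactly where non-spherical completeness and non-measurability enter, since they exclude functionals vanishing on $\Sigma_I \cW$ and force every functional to be determined by its restrictions to the factors. I would cite this from \cite{MN89} rather than reprove it.
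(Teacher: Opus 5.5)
Your proposal is correct and follows the same route as the paper. The paper's one-line proof cites only $k \in \Rfl$ and the duality between bounded direct product and completed direct sum, leaving the transfinite induction on rank implicit. Your extra details are exactly what that one line relies on: the check that the canonical map $V \to V^{\vee\vee}$ corresponds to $\rA_I$ of the componentwise canonical maps, and the observation that the standing non-measurability assumption on $\kappa$ makes every index set admissible for the duality $(\Pi_I \cW)^{\vee} \cong \Sigma_I (\cW(i)^{\vee})_{i \in I}$.
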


\begin{proof}
The assertion immediately follows from $k \in \Rfl$ and the duality between bounded direct product and completed direct sum (cf.\ \cite{Roo78} 4.21 and \cite{MN89} \S 12 Corollary 7.18).
\end{proof}

\begin{crl}
Suppose that $\kappa$ is not $\omega$. If $k$ is not spherically complete, then $\cR$ is not closed under taking quotient by closed subspaces.
\end{crl}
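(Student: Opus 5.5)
The idea is to exhibit an element of $\cR$ having a quotient by a closed subspace that lies outside $\cR$. The natural candidate is $\rCbd(\Q,k)^{\vee}$, which by Theorem \ref{measure space} does not belong to $\cR$ when $k$ is not spherically complete. By Proposition \ref{reflexive}, every element of $\cR$ is then reflexive. So it suffices to write $\rCbd(\Q,k)^{\vee}$ (or another non-member of $\cR$) as a quotient of some $V \in \cR$.

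Since $\Q$ is countable, I will use the set-theoretic surjection $\omega \to \Q$ and restriction of functions. The inclusion $\rCbd(\Q,k) \hookrightarrow \ell^{\infty}(\Q,k) \cong \ell^{\infty}(\omega,k)$ is an isometric embedding; this is where the countability of $\Q$ enters. Dualising gives a restriction map $\ell^{\infty}(\omega,k)^{\vee} \to \rCbd(\Q,k)^{\vee}$. I would then argue as follows.

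First, $\ell^{\infty}(\omega,k)^{\vee}$ need not lie in $\cR$, so I do not want it as the domain. Instead I take the domain to be $\rC_0(\omega,k)$, which lies in $\cR$ because $\kappa \neq \omega$ guarantees $\omega \in \kappa$. For non-spherically complete $k$, the dual of $\ell^{\infty}(\omega,k)$ is $\rC_0(\omega,k)$; this is the duality cited from \cite{Roo78} 4.21 and \cite{MN89}. The relevant map is then the transpose of the isometric embedding $\iota \colon \rCbd(\Q,k) \to \ell^{\infty}(\omega,k)$, namely
\[
\iota^{\vee} \colon \rC_0(\omega,k) \cong \ell^{\infty}(\omega,k)^{\vee} \to \rCbd(\Q,k)^{\vee}.
\]
Its kernel is closed, so it remains to show that $\iota^{\vee}$ is surjective and induces an isomorphism from the quotient onto its target. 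Surjectivity means every bounded functional on $\rCbd(\Q,k)$ extends to $\ell^{\infty}(\omega,k)$ with control on the norm. This is a Hahn--Banach statement, and over a non-spherically complete $k$ it can fail in general. That is the main obstacle.

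If the extension approach fails, I would switch to the dual picture. Reflexivity of members of $\cR$ (Proposition \ref{reflexive}) converts closed subspaces of $\rCbd(\Q,k)$-type objects into quotients. Concretely, I would realise $\rCbd(\Q,k)$ itself as a quotient of a member of $\cR$, which by Theorem \ref{bounded continuous function} suffices without any spherical completeness issue. For this I would use the surjection $\ell^{\infty}(\omega,k) \to \rCbd(\Q,k)$ given by a bounded linear lifting. Such a lifting would come from a continuous retraction-like construction, using that $\Q$ is zero-dimensional and countable, so that $\rCbd(\Q,k)$ has an orthogonal-type structure. Equivalently, I would find a closed subspace $K$ of some $\ell^{\infty}(I,k)$ with $\ell^{\infty}(I,k)/K \cong \rCbd(\Q,k)$, for example via a quotient of $\rC_0$ or $\ell^{\infty}$ over a suitable index set that encodes clopen partitions of $\Q$.

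I expect constructing this isometric quotient map, and checking that the quotient norm matches the supremum norm, to be the hardest step. The density of $\v{k}$ should help here, allowing approximate norm attainment. Once a closed subspace with quotient $\rCbd(\Q,k)$ or $\rCbd(\Q,k)^{\vee}$ is found, Theorem \ref{bounded continuous function} or Theorem \ref{measure space} finishes the proof.
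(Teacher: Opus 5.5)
Your proposal has a genuine gap: it never produces a member of $\cR$ with a quotient outside $\cR$. Both concrete routes you sketch fail for structural reasons, not just technical ones.

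\emph{First route.} The map $\iota^{\vee}$ is not surjective. Since $k$ is not spherically complete, every functional on $\ell^{\infty}(\Q,k)$ has the form $x \mapsto \sum_q a_q x(q)$ with $a \in \rC_0(\Q,k)$. Hence the image of $\iota^{\vee}$ consists only of $\rC_0$-combinations of point evaluations. Now take distinct rationals $p_n \neq p$ with $p_n \to p$. The functional $f \mapsto \sum_n (f(p_n)-f(p))$ is bounded on $\rCbd(\Q,k)$, because the terms tend to $0$. Evaluate it on indicators of small clopen neighbourhoods of $p_m$ that avoid $p$ and the other $p_n$. This forces the coefficient at every $p_m$ to be $1$, which is impossible for $a \in \rC_0(\Q,k)$.

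\emph{Second route.} A quotient map $\ell^{\infty}(\omega,k) \to \rCbd(\Q,k)$ cannot exist. By the open mapping theorem, its transpose would embed $\rCbd(\Q,k)^{\vee}$ topologically into $\ell^{\infty}(\omega,k)^{\vee} \cong \rC_0(\omega,k)$, which is of countable type. But $\rCbd(\Q,k)^{\vee}$ contains $\ell^{\infty}(\omega,k)$: dualise the summand $\rC_0(\omega,\rCbd(\Q,k))$ from the proof of Theorem \ref{bounded continuous function}. And $\ell^{\infty}(\omega,k)$ is not of countable type, since its nonzero quotient by $\rC_0(\omega,k)$ has zero dual.

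The same countable-type obstruction rules out realising $\rCbd(\Q,k)$ or its dual as a quotient of $\ell^{\infty}(I,k)$ or $\rC_0(I,k)$ with $I$ countable. These are the only spaces of the kind you suggest that are available when $\kappa = \omega_1$. Your remaining fallback is left unconstructed. It can be carried out for large $\kappa$: the map $\rC_0(B,k) \to \rCbd(\Q,k)$, $a \mapsto \sum_{b \in B} a_b b$, with $B$ the closed unit ball, is a quotient map by density of the value group. However, this needs $\# B \in \kappa$, so it says nothing for $\kappa = \omega_1$.

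The missing idea is that you do not need to hit $\rCbd(\Q,k)$ or its dual at all. Proposition \ref{reflexive} gives $\cR \subset \Rfl$, so any non-reflexive quotient of a member of $\cR$ suffices. You already quote the one fact required.

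Since $\kappa \neq \omega$, we have $\ell^{\infty}(\omega,k) \in \cR$, and $\rC_0(\omega,k)$ is a closed subspace of it. Under $\ell^{\infty}(\omega,k)^{\vee} \cong \rC_0(\omega,k)$, $a \mapsto (x \mapsto \sum_n a_n x(n))$, a functional vanishing on $\rC_0(\omega,k)$ has $a_n = 0$ for every $n$ (evaluate at unit vectors). So $(\ell^{\infty}(\omega,k)/\rC_0(\omega,k))^{\vee} = 0$, while the quotient itself is nonzero. The quotient is therefore not reflexive, hence not in $\cR$. This is the paper's argument, which cites the non-reflexivity of $\ell^{\infty}(\omega,k)/\rC_0(\omega,k)$ from the literature.
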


\begin{proof}
By Proposition \ref{reflexive}, the assertion follows from the fact that $\ell^{\infty}(\omega,k)/\rC_0(\omega,k)$ is not reflexive (cf.\ \cite{MN89} \S 12 Theorem 7.14).
\end{proof}

\begin{crl}
Suppose that $\kappa$ is not $\omega$. If $k$ is not spherically complete, then $\cR$ is not closed under taking closed subspaces.
\end{crl}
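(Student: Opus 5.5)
We will argue exactly as in the preceding corollary, with quotients replaced by closed subspaces and reflexivity playing the role of the obstruction. By Proposition \ref{reflexive}, every member of $\cR$ is reflexive when $k$ is not spherically complete. It therefore suffices to exhibit some $V \in \cR$ (with index sets in $\Set_{< \kappa}$, which for $\kappa \neq \omega$ allows countable index sets) together with a closed subspace $W \subset V$ that is not reflexive.

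The first candidate is the dual of the quotient used before. Since $\ell^{\infty}(\omega,k)/\rC_0(\omega,k)$ is not reflexive, we consider $(\ell^{\infty}(\omega,k)/\rC_0(\omega,k))^{\vee}$. By the standard exactness of the dual functor for the strict exact sequence
\[
0 \to \rC_0(\omega,k) \to \ell^{\infty}(\omega,k) \to \ell^{\infty}(\omega,k)/\rC_0(\omega,k) \to 0,
\]
this dual identifies isometrically with the annihilator of $\rC_0(\omega,k)$ in $\ell^{\infty}(\omega,k)^{\vee}$. Any bounded linear map that factors through a quotient is bounded on that quotient with the same norm, which gives the isometric identification. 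The annihilator is a closed subspace of $\ell^{\infty}(\omega,k)^{\vee}$.

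Next we show that $\ell^{\infty}(\omega,k)^{\vee}$ belongs to $\cR$. By the duality cited in Theorem \ref{measure space}, we have $\ell^{\infty}(\omega,k)^{\vee} \cong \ell^{\infty}(\omega,k)^{\vee\vee\vee}$. Moreover, $\rC_0(\omega,k)^{\vee} \cong \ell^{\infty}(\omega,k)$, and since $k$ is not spherically complete, $\ell^{\infty}(\omega,k)^{\vee} \cong \rC_0(\omega,k)$ (cf.\ \cite{Roo78}, \cite{MN89}). This last isomorphism is exactly what makes Proposition \ref{reflexive} work. With it, the annihilator corresponds to a closed subspace of $\rC_0(\omega,k) \in \cR$.

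The main obstacle is that this annihilator is actually zero in the non-spherically complete case, because every functional on $\ell^{\infty}$ comes from $\rC_0$-duality and hence vanishing on $\rC_0$ forces it to be zero. So this route yields nothing, and we need a different non-reflexive closed subspace. We will instead work inside $V = \ell^{\infty}(\omega,k)$. The plan is to embed a non-reflexive space such as $\ell^{\infty}(\omega,k)/\rC_0(\omega,k)$, or a suitable space of the form $\rCbd(X,k)$, isometrically as a closed subspace of $\ell^{\infty}(\omega,k)$ or of $\ell^{\infty}(\omega_1,k)$ when $\omega_1 \in \kappa$. One way to do this is to map $x \mapsto (f_n(x))_n$ for a countable norming family of functionals $f_n$. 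Such a family exists whenever the target is of countable type, which requires choosing a separable non-reflexive Banach space. Failing that, we will use $\ell^{\infty}(\lambda,k)$ with $\lambda$ the density character. Every Banach space of density $\lambda$ embeds isometrically into $\ell^{\infty}(\lambda,k)$ up to the norm values in $\v{k}$, using Hahn--Banach-type norming with constants arbitrarily close to $1$ (density of $\v{k}$). Isometric versus $\epsilon$-isometric embedding must be handled carefully, because isomorphism here means isometry. However, reflexivity is only used qualitatively, so non-reflexivity of the image of a closed subspace suffices even under an equivalent norm. This embedding step is the one I expect to require the most care.
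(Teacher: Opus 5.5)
Your reduction is the same as the paper's. By Proposition~\ref{reflexive} every member of $\cR$ is reflexive. Since $\kappa \neq \omega$ gives $\omega \in \kappa$, we have $\ell^{\infty}(\omega,k) \in \cR$, so it suffices to find a non-reflexive closed subspace of it. But that existence statement is the whole content of the corollary, and the paper does not construct it. It cites the known theorem (van Rooij 4.J, Perez-Garcia--Schikhof Theorem 2.3) that $\ell^{\infty}(\omega,k)$ has a non-reflexive closed subspace when $k$ is not spherically complete.

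You instead try to build such a subspace, and none of your routes can work:
\begin{itemize}
\item You correctly found that $(\ell^{\infty}(\omega,k)/\rC_0(\omega,k))^{\vee} = 0$. For exactly that reason this quotient cannot be embedded into any $\ell^{\infty}(\lambda,k)$, even with an equivalent norm. The coordinate functionals of $\ell^{\infty}(\lambda,k)$ restrict to a point-separating family of bounded functionals on any subspace, whereas the quotient is nonzero with zero dual.
\item A non-reflexive space of countable type does not exist. When $k$ is not spherically complete, every Banach space of countable type is linearly homeomorphic to $\rC_0(\omega,k)$ or to some $k^n$. It also satisfies Hahn--Banach up to any factor $t<1$, so it is reflexive.
\item The fallback, that every space of density $\lambda$ embeds into $\ell^{\infty}(\lambda,k)$ by Hahn--Banach norming, is false for the same reason ($\ell^{\infty}/\rC_0$ again). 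It would also need $\lambda \in \kappa$, which fails for uncountable $\lambda$ when $\kappa = \omega_1$.
\end{itemize}

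For $\rCbd(X,k)$ the difficulty is the reverse of what you expect. Point evaluations trivially give an isometry onto a closed subspace of $\ell^{\infty}(X,k)$. The real task is proving that $\rCbd(X,k)$ is non-reflexive for some $X$ with $\# X \in \kappa$, and your proposal does not address it.

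Your remark that non-reflexivity survives an equivalent renorming is also only half right. Non-surjectivity of $V \to V^{\vee\vee}$ transfers along linear homeomorphisms, but failure of isometry does not. For any closed subspace $D \subset \ell^{\infty}(\lambda,k)$ the coordinate functionals are norming, so $D \to D^{\vee\vee}$ is automatically isometric. What you need is a closed $D \subset \ell^{\infty}(\omega,k)$ for which this map is not surjective. That is exactly what the cited Perez-Garcia--Schikhof theorem supplies; without citing or reproving it, the argument is incomplete.
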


\begin{proof}
By Proposition \ref{reflexive}, the assertion follows from the fact that $\ell^{\infty}(\omega,k)$ admits a non-reflexive closed subspace (cf.\ \cite{Roo78} 4.J and \cite{PG95} Theorem 2.3 (i) and (iii)).
\end{proof}

\begin{crl}
Suppose that $\kappa$ is not $\omega$. If $k$ is not spherically complete, then $\cR$ is not closed under taking tensor product.
\end{crl}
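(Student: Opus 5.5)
Since $\kappa \neq \omega$, we have $\omega \in \kappa$, so $\ell^{\infty}(\omega,k)$ and $\rC_0(\omega,k)$ belong to $\cR$. By Proposition \ref{reflexive}, every member of $\cR$ is reflexive. It therefore suffices to exhibit two members of $\cR$ whose completed tensor product is not reflexive. The natural candidate is $\ell^{\infty}(\omega,k) \widehat{\otimes}_k \ell^{\infty}(\omega,k)$, or more conveniently $\ell^{\infty}(\omega,k) \widehat{\otimes}_k V$ for a suitable $V \in \cR$.

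\vs
\textbf{Reduction.} The key observation I would use is the standard identification, for any Banach $k$-vector space $W$,
\be
\rC_0(\omega,k) \widehat{\otimes}_k W \cong \rC_0(\omega,W),
\ee
which stays inside $\cR$. So the counterexample must involve a tensor product of two ``$\ell^{\infty}$-type'' factors. For $\ell^{\infty}(\omega,k) \widehat{\otimes}_k \ell^{\infty}(\omega,k)$, the canonical map into $\ell^{\infty}(\omega \times \omega,k)$ is an isometric embedding, since the norms on non-Archimedean tensor products of spaces with orthogonal-type bases are computed by suprema of coefficients. Its image is the closure of the algebraic tensor product, i.e.\ the closed subspace of bounded doubly indexed families that are uniform limits of finite-rank matrices. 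I would then show that this closed subspace, call it $X$, is not reflexive.

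\vs
\textbf{Non-reflexivity of $X$.} This is the main obstacle. I would try to find a closed subspace of $X$ (or a quotient) known to be non-reflexive and complemented in $X$. Since complemented subspaces and quotients of reflexive spaces are reflexive, this would finish the argument. A natural choice is the diagonal $\set{(x_{m,n})}{x_{m,n} = 0 \text{ for } m \neq n}$ intersected with $X$. A diagonal family $(a_n)$ lies in the closure of finite-rank matrices iff $a_n \to 0$, so the diagonal part is $\rC_0(\omega,k)$. That space is reflexive, so the diagonal does not work.

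Instead I would pass to duals. Using the duality formulae (\cite{Roo78} 4.21, \cite{MN89} \S 12 Corollary 7.18), since $k$ is not spherically complete we have $\ell^{\infty}(\omega,k)^{\vee} \cong \rC_0(\omega,k)$. Then
\be
X^{\vee} \cong \Hom(\ell^{\infty}(\omega,k),\rC_0(\omega,k)),
\ee
and $X^{\vee\vee}$ can be compared with $\Hom(\rC_0(\omega,k),\ell^{\infty}(\omega,k)) \cong \ell^{\infty}(\omega\times\omega,k)$. The goal is to show that the canonical map $X \to X^{\vee\vee}$ misses, for instance, the identity matrix. The identity matrix is bounded but is not a uniform limit of finite-rank matrices, since its diagonal does not tend to $0$.

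To justify that the identity does define an element of $X^{\vee\vee}$, I would use that every $\phi \in \Hom(\ell^{\infty}(\omega,k),\rC_0(\omega,k))$ is of the form $(\phi_{m,n})$ with pairing $\sum_n \phi_{n,n}$. Convergence of this sum should follow from the non-Archimedean Chase-type result (\cite{Mih26} Corollary 3.9) applied to maps $\Pi \to \Sigma$: it gives that $\phi$ is small away from finite blocks, hence $\phi_{n,n} \to 0$. With that, the functional $\phi \mapsto \sum_n \phi_{n,n}$ is bounded, extends the evaluation pairing on finite-rank tensors, and is not of the form evaluation at an element of $X$. Hence $X \notin \Rfl$, and so $X \notin \cR$.
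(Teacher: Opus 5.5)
\textbf{Comparison with the paper.} Your reduction is the paper's: $\ell^{\infty}(\omega,k)\in\cR$, Proposition~\ref{reflexive}, and non-reflexivity of $X=\ell^{\infty}(\omega,k)\hat{\otimes}\ell^{\infty}(\omega,k)$. The paper does not prove the last fact; it cites \cite{PG95} Theorem 2.3 (iii). You instead try to prove it via $X^{\vee}\cong\Hom(\ell^{\infty}(\omega,k),\rC_0(\omega,k))$ and a trace-type element of $X^{\vee\vee}$. That route works, is self-contained, and shows exactly where non-spherical completeness enters. However, one step is not justified by the tool you cite, and one supporting claim is wrong as stated.

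\textbf{The gap: convergence of $\sum_n\phi_{n,n}$.} Corollary 3.9 of \cite{Mih26}, in the form used in this paper (proof of Proposition~\ref{factorisation pair reduction}), only gives finite $I_0,J_0$ with $\|\phi^{J\setminus J_0}_{I\setminus I_0}\|<\|\phi\|$. Iterating this yields a strictly decreasing sequence of norms. Since $|k|$ is dense, such a sequence need not tend to $0$, so ``$\phi$ is small away from finite blocks'' does not follow.

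A direct gliding hump repairs this. Since $\ell^{\infty}(\omega,k)^{\vee}\cong\rC_0(\omega,k)$, each row has $\phi_{m,n}\to 0$ as $n\to\infty$, and $\phi(x)(m)=\sum_n\phi_{m,n}x(n)$ for every $x\in\ell^{\infty}(\omega,k)$. Each column tends to $0$ because $\phi(e_n)\in\rC_0(\omega,k)$.

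Suppose $|\phi_{n,n}|\geq\epsilon$ for infinitely many $n$. Choose $n_1<n_2<\cdots$ among them with $|\phi_{n_i,n_j}|<\epsilon$ for all $i\neq j$, and let $x$ be the indicator of $\{n_i\}$. Then $|\phi(x)(n_i)|\geq\epsilon$ for every $i$, contradicting $\phi(x)\in\rC_0(\omega,k)$.

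Running the same argument on pairs $(m_i,n_i)$ with both coordinates increasing shows that the whole matrix lies in $\rC_0(\omega\times\omega,k)$. Hence $X^{\vee}\cong\rC_0(\omega\times\omega,k)$ and $X^{\vee\vee}\cong\ell^{\infty}(\omega\times\omega,k)$, and the canonical map $X\to X^{\vee\vee}$ is the coordinate map. This is the cleanest way to finish.

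\textbf{Two corrections.}

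First, the identity matrix is not excluded from $X$ because ``its diagonal does not tend to $0$''. The rank-one tensor $\mathbf{1}\otimes\mathbf{1}$ lies in $X$ and has constant diagonal. What is true is your earlier claim about \emph{diagonal} matrices, and it needs a proof. Suppose $F$ has rank $r$ and $\|F-\mathrm{id}\|<1$, and let $F_n$ denote its rows. Take a nontrivial relation $\sum_{j=0}^{r}c_jF_{n_j}=0$ among $r+1$ rows. Then $\max_j|c_j|=\|\sum_j c_je_{n_j}\|=\|\sum_j c_j(e_{n_j}-F_{n_j})\|<\max_j|c_j|$, a contradiction.

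Second, your justification of the isometric embedding $X\hookrightarrow\ell^{\infty}(\omega\times\omega,k)$ appeals to orthogonal bases. The space $\ell^{\infty}(\omega,k)$ is not of countable type and in general has none (e.g.\ over $\Cp$). You do not need isometry or even injectivity, though. It suffices that the coordinate map $X\to\ell^{\infty}(\omega\times\omega,k)$ is continuous. Then its image lies in the closure of the finite-rank matrices, and pairing $z\in X$ with matrix units recovers the coordinates of $z$. So $\mathrm{ev}_z$ equal to the trace functional would force the identity into that closure.
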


\begin{proof}
By Proposition \ref{reflexive}, the assertion follows from the fact that $\ell^{\infty}(\omega,k) \hat{\otimes} \ell^{\infty}(\omega,k)$ is not reflexive (cf.\ \cite{PG95} Theorem 2.3 (iii)).
\end{proof}

\begin{thm}
\label{left adjoint}
The forgetful functor $\cR \hookrightarrow \Ban(k)$ does not admit a left adjoint functor.
\end{thm}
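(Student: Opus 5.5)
We argue by contradiction. Suppose the forgetful functor $\iota \colon \cR \hookrightarrow \Ban(k)$ has a left adjoint $L$. Since $\cR$ is a full subcategory of $\Ban(k)$ (with morphisms $\Hom_{\leq 1}$) and is closed under isomorphism, $\cR$ is then a full replete reflective subcategory. We use the standard categorical fact that such a subcategory is closed under every limit that exists in the ambient category. The quick argument: let $D$ be a small diagram in $\cR$ with limit $X$ in $\Ban(k)$ and unit $\eta_X \colon X \to \iota L X$. The cone on $X$ factors through $\eta_X$ by the universal property of the unit. This gives a morphism $r \colon \iota L X \to X$ with $r \circ \eta_X = \id_X$. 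The relation $\eta_X \circ r = \id$ follows from the uniqueness part of the adjunction, since both $\eta_X \circ r \circ \eta_X$ and $\eta_X$ correspond to the same morphism. Hence $X \cong \iota L X \in \cR$. The plan is to exhibit $\rCbd(\Q,k)$ as such a limit of objects of $\cR$, which contradicts Theorem \ref{bounded continuous function}.

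We first dispose of the degenerate case $\kappa = \omega$. Then $\cR = F_k$, because every index set is finite. The bounded direct product $\ell^{\infty}(\omega,k)$ is the categorical product of countably many copies of $k$ in $\Ban(k)$: a family of contractions $f_n \colon X \to k$ assembles into a contraction $X \to \ell^{\infty}(\omega,k)$ exactly because the norm is the supremum norm. By the closure property, $\ell^{\infty}(\omega,k) \in F_k$, which is absurd since it is infinite dimensional.

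Assume now $\kappa \neq \omega$, so that $\ell^{\infty}(\omega,k)$ and $\rC_0(\omega,k)$ belong to $\cR$, and so does $\ell^{\infty}(\Q,k) \cong \ell^{\infty}(\omega,k)$. Let $S$ be the set of pairs $s = (q,(q_n)_{n \in \omega})$ with $q \in \Q$ and $q_n \to q$ in $\Q$. For each $s \in S$ define
\be
\phi_s \colon \ell^{\infty}(\Q,k) \to \ell^{\infty}(\omega,k), \quad f \mapsto (f(q_n) - f(q))_{n \in \omega}.
\ee
By the ultrametric inequality we have $\n{\phi_s} \leq 1$. Let $\iota_s \colon \rC_0(\omega,k) \hookrightarrow \ell^{\infty}(\omega,k)$ be the isometric inclusion. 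Consider the wide pullback of the diagram consisting of $\ell^{\infty}(\Q,k)$, the family $\ell^{\infty}(\omega,k)$, and the family $\rC_0(\omega,k)$, indexed by $s \in S$, with arrows $\phi_s$ and $\iota_s$. The diagram is small, because $S$ is a set, and the cardinality bound $\kappa$ constrains only the objects, not the shape of the diagram.

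The step needing the most care is computing this limit in $\Ban(k)$ with the contractive morphisms. Limits there are formed as bounded products equipped with the supremum norm, cut down by the equaliser conditions. The limit is therefore the set of tuples $(f,(b_s)_{s \in S})$ with $\phi_s(f) = \iota_s(b_s)$, normed by $\max\ens{\n{f}, \sup_s \n{b_s}}$. Since each $\iota_s$ is isometric and $\n{\phi_s} \leq 1$, the element $b_s$ is determined by $f$ and satisfies $\n{b_s} = \n{\phi_s(f)} \leq \n{f}$. The projection onto the first factor is therefore an isometric isomorphism onto
\be
\set{f \in \ell^{\infty}(\Q,k)}{\forall s \in S[\phi_s(f) \in \rC_0(\omega,k)]}.
\ee
Because $\Q$ is first countable, sequential continuity coincides with continuity, so this subspace is exactly $\rCbd(\Q,k)$ with the supremum norm. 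The closure property then gives $\rCbd(\Q,k) \in \cR$, which contradicts Theorem \ref{bounded continuous function}.
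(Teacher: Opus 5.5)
Your argument is correct, and it takes a genuinely different route from the paper.

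The paper argues through duality. By Lemma \ref{universality}, the unit $V \to F(V)$ of a reflection into a reflexive subclass of $\Rfl$ induces $F(V)^{\vee} \cong V^{\vee}$, so every dual space would lie in $\cR$. This is contradicted by $\rC_0(\omega,k)^{\vee} \cong \ell^{\infty}(\omega,k) \notin F_k$ when $\kappa = \omega$, and by Theorem \ref{measure space} otherwise. For $\kappa \neq \omega$ this needs two inputs. First, $\cR$ must be a reflexive subclass of $\Rfl$, which uses Proposition \ref{reflexive} and the $\ell^{\infty}$/$\rC_0$ duality. Second, it needs Theorem \ref{measure space}. Both assume that $k$ is not spherically complete, a hypothesis that appears in the introduction but not in the statement.

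You instead use only the categorical fact that a full replete reflective subcategory is closed under every limit existing in the ambient category. You then realise $\rCbd(\Q,k)$ as a wide pullback in $\Ban(k)$ of objects of $\cR$ along contractions. Your computation of that limit is right:
\begin{itemize}
\item the $b_s$ are determined by $f$ with $\|b_s\| \leq \|f\|$, so the limit norm is the supremum norm of $f$;
\item first countability of $\Q$ turns the sequential conditions into continuity.
\end{itemize}
So you land on Theorem \ref{bounded continuous function}, which rests only on the standing density and non-measurability assumptions.

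Your route therefore avoids duality theory altogether. It proves the theorem under exactly the standing assumptions of the section, with spherically complete $k$ included. It also shows along the way that $\cR$ is not closed under wide pullbacks in $\Ban(k)$. The paper's route instead locates the obstruction in reflexivity, showing that a reflection would force all dual spaces into $\cR$.

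The one compressed step is your closure lemma. Spell out that the factorisations of the limit projections through $\eta_X$ form a cone on $LX$, by uniqueness of factorisation. Then say that this cone induces $r$, and that $r \circ \eta_X = \id_X$ follows from uniqueness of maps into the limit.
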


A subclass $\cC \subset \Rfl$ is said to be reflexive if $k \in \cC$ and $V^{\vee} \in \cC$ for any $V \in \cC$. In order to prove Theorem \ref{left adjoint}, we prepare several lemmata on a reflexive subclass of $\Rfl$.

\begin{lmm}
\label{universality}
Let $\cC$ be a reflexive subclass of $\Rfl$. Let $V \in \Ban(k)$, $V' \in \cC$, and $\iota \in \Hom_{\leq 1}(V,V')$. If $\iota$ is universal in the sense that the map $\Hom_{\leq 1}(V',W) \to \Hom_{\leq 1}(V,W)$ defined by
\be
f \mapsto f \circ \iota
\ee
is bijective for any $W \in \cC$, then $V'$ is isomorphic to $V^{\vee \vee}$, and $(V')^{\vee}$ is isomorphic to $V^{\vee}$.
\end{lmm}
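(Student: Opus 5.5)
Throughout, $\eta_V \colon V \to V^{\vee \vee}$ denotes the canonical morphism, which has norm at most $1$. The plan is to prove first that $(V')^{\vee} \cong V^{\vee}$ and then dualise. The key observation is that $k \in \cC$, so the universality of $\iota$ applied to $W = k$ says that $\iota^{\vee} \colon (V')^{\vee} \to V^{\vee}$, $f \mapsto f \circ \iota$, restricts to a bijection between the closed unit balls $\Hom_{\leq 1}(V',k)$ and $\Hom_{\leq 1}(V,k)$. From this we want to conclude that $\iota^{\vee}$ is an isometric isomorphism. Linearity and $\n{\iota^{\vee}} \leq 1$ are clear.

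Injectivity follows by scaling. If $f \circ \iota = 0$ with $f \neq 0$, choose $c \in k^{\times}$ with $\n{cf} \leq 1$. Then $cf$ and $0$ are distinct elements of the unit ball with the same image, which is a contradiction.

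Isometry and surjectivity also follow by scaling, using that $\v{k}$ is dense in $\R_{\geq 0}$ (the standing assumption of this section). Let $f \in (V')^{\vee}$ and suppose $\n{f \circ \iota} < \n{f}$. Pick $c \in k^{\times}$ with $\n{f \circ \iota} \leq \v{c}^{-1} < \n{f}$. Then $c f \circ \iota$ lies in the unit ball, so it equals $g \circ \iota$ for some $g$ with $\n{g} \leq 1$. By injectivity $g = cf$, but $\n{cf} > 1$, a contradiction. Hence $\iota^{\vee}$ is isometric. For surjectivity, given $h \in V^{\vee}$, scale it into the unit ball, lift, and scale back. Thus $\iota^{\vee}$ is an isomorphism in $\Ban(k)$, which proves the second claim.

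For the first claim, dualise again: $\iota^{\vee\vee} \colon V^{\vee\vee} \to (V')^{\vee\vee}$ is an isomorphism. Since $V' \in \cC \subset \Rfl$, the map $\eta_{V'}$ is an isomorphism. Hence $V^{\vee \vee} \cong (V')^{\vee \vee} \cong V'$, and more precisely $\eta_{V'}^{-1} \circ \iota^{\vee\vee} \circ \eta_V = \iota$ by naturality of $\eta$.

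The main obstacle is the second paragraph: a bijection of unit balls need not a priori be norm-preserving, since the norm values of $(V')^{\vee}$ need not lie in $\v{k}$. This is handled by using density of $\v{k}$ to find scalars whose absolute values fall between $\n{f \circ \iota}$ and $\n{f}$.

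If density were not available, I would instead use the full universality for the test object $W = V^{\vee\vee}$ or $W = (V')^{\vee\vee}$. Reflexivity of the class $\cC$ guarantees these lie in $\cC$, since $V^{\vee} \cong (V')^{\vee}$ already holds as vector spaces. I would then factor $\eta_V$ through $\iota$ and compare the two morphisms using uniqueness. The density route is more direct, so I would use it, and I would only use reflexivity of $\cC$ (beyond $k \in \cC$ and $V' \in \Rfl$) as a fallback.
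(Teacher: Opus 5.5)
Your proposal is correct and follows the paper's proof. You test universality against $W = k$ and use density of $\v{k}$ to upgrade the bijection of unit balls to an isometric isomorphism $\iota^{\vee} \colon (V')^{\vee} \to V^{\vee}$; your scaling argument spells out what the paper asserts in one line. You then dualise and use $V' \in \Rfl$ to obtain $V' \cong V^{\vee\vee}$. The only difference is that you read off $(V')^{\vee} \cong V^{\vee}$ directly from $\iota^{\vee}$, which is cleaner than the paper's detour through reflexivity of $V^{\vee}$ via \cite{Roo78} 4.25. Your closing fallback paragraph is unnecessary, since density of $\v{k}$ is a standing assumption in that section.
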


\begin{proof}
The dual $\Hom(V',k) \to \Hom(V,k)$ of $\iota$ bijectively maps $\Hom_{\leq 1}(V',k)$ to $\Hom_{\leq 1}(V,k)$, and hence is an isomorphism because $\v{k}$ is dense in $\R_{\geq 0}$. Therefore, the second dual
\be
V' \cong (V')^{\vee \vee} = \Hom(V',k)^{\vee} \to \Hom(V,k)^{\vee} = V^{\vee \vee},
\ee
of $\iota$ is also an isomorphism. In particular, $V^{\vee \vee}$ is reflexive, and hence so is $V^{\vee}$ by \cite{Roo78} 4.25. This implies
\be
(V')^{\vee} \cong (V^{\vee \vee})^{\vee} = (V^{\vee})^{\vee \vee} \cong V^{\vee}.
\ee
\end{proof}

\begin{proof}[Proof of Theorem \ref{left adjoint}]
Assume the existence of a left adjoint functor $F$. For any $V \in \Ban(k)$, the adjunction $V \to F(V)$ satisfies the universality in the sense of Lemma \ref{universality}, and hence $V^{\vee}$ belongs to $\cC$. This contradicts $\rC_0(\omega,k)^{\vee} \cong \ell^{\infty}(\omega,k) \notin F_k$ if $\kappa$ is $\omega$ and Theorem \ref{measure space} if $\kappa$ is not $\omega$.
\end{proof}

\vspace{0.3in}
\addcontentsline{toc}{section}{Acknowledgements}
\noindent {\Large \bf Acknowledgements}
\vspace{0.2in}

\noindent
I thank K.\ Eda for introducing to me the preceding study of Reid class. I thank all people who helped me to learn mathematics and programming. I also thank my family.

%

\end{document}